\definecolor{wb}{RGB}{51,153,255}
\numberwithin{equation}{subsection}
\newcommand{\defeq}{\vcentcolon=}
\newcommand{\eqdef}{=\vcentcolon}
\def\moverlay{\mathpalette\mov@rlay}
\def\mov@rlay#1#2{\leavevmode\vtop{%
   \baselineskip\z@skip \lineskiplimit-\maxdimen
   \ialign{\hfil$\m@th#1##$\hfil\cr#2\crcr}}}
\newcommand{\charfusion}[3][\mathord]{
    #1{\ifx#1\mathop\vphantom{#2}\fi
        \mathpalette\mov@rlay{#2\cr#3}
      }
    \ifx#1\mathop\expandafter\displaylimits\fi}
\newcommand{\bigcupdot}{\charfusion[\mathop]{\bigcup}{\cdot}}
\DeclareFontFamily{U}{mathb}{\hyphenchar\font45}
\DeclareFontShape{U}{mathb}{m}{n}{
      <5> <6> <7> <8> <9> <10> gen * mathb
      <10.95> mathb10 <12> <14.4> <17.28> <20.74> <24.88> mathb12
      }{}
\DeclareSymbolFont{mathb}{U}{mathb}{m}{n}
\DeclareMathSymbol{\precneq}{3}{mathb}{"AC}
\DeclareMathSymbol{\varprec}{3}{mathb}{"A0}
\newtheoremstyle{definitions}
 	{\topsep}
	{\topsep}
	{}
	{}
	{\bfseries}
	{:}
	{.5em}
	{}
\newtheoremstyle{lemmata}
	{\topsep}
	{\topsep}
	{\itshape} 
	{}
	{\bfseries}
	{:}
	{.5em}
	{}
\theoremstyle{lemmata}
\newtheorem{Theorem}[subsection]{Theorem}
\newtheorem{Lemma}[subsection]{Lemma}
\newtheorem{Corollary}[subsection]{Corollary}
\newtheorem{Proposition}[subsection]{Proposition}
\theoremstyle{definitions}
\newtheorem{Remark}[subsection]{Remark}
\newtheorem{Remarks}[subsection]{Remarks}
\newtheorem*{Remarks-nn}{Remarks}
\newtheorem{Example}[subsection]{Example}
\newtheorem{Problem}[subsection]{Problem}
\newtheorem{Property}[subsection]{Property}
\DeclareMathOperator{\GL}{GL}
\DeclareMathOperator{\SL}{SL}
\DeclareMathOperator{\aut}{aut}
\DeclareMathOperator{\Characteristic}{char}
\DeclareMathOperator{\Gr}{Gr}
\DeclareMathOperator{\id}{id}
\DeclareMathOperator{\res}{res}
\title[Hecke operators and growth of expansion coefficients]{Modular forms for \(\GL(r, \mathds{F}_{q}[T])\): Hecke operators and growth of expansion coefficients}
\author{Ernst-Ulrich Gekeler \\ Universität des Saarlandes \\ \lowercase{\href{mailto:gekeler@math.uni-sb.de}{gekeler@math.uni-sb.de}}}
\address{FR 6.1 Mathematik, Universität des Saarlandes, Postfach 15 11 50 D-66041 Saarbrücken}
\email{gekeler@math.uni-sb.de}
\date{\today}
\subjclass{MSC Primary 11F52; Secondary 11G09, 14G22}
\keywords{Drinfeld modular forms, Drinfeld discriminant functions, Hecke operators}
\begin{document}

\DefTblrTemplate{firsthead,middlehead,lasthead,capcont,contfoot-text}{default}{} 

\begin{abstract}
	We determine the action of the Hecke operators \(T_{\mathfrak{p},i}\) on the coefficient forms \(g_{1}, \dots, g_{r-1}, g_{r} = \Delta\), and
	\(h\), which together generate the ring of modular forms for \(\GL(r, \mathds{F}_{q}[T])\). All these are eigenforms with powers of \(\pi\)
	as eigenvalues, where \(\pi\) is the monic generator of the prime ideal \(\mathfrak{p}\) of \(\mathds{F}_{q}[T]\). We further describe the 
	growth of the \(t\)-expansion coefficients of the discriminant function \(\Delta\). It is such that the product expansion of \(\Delta\) as well
	as the \(t\)-expansion of each modular form converges on the natural fundamental domain for \(\GL(r, \mathds{F}_{q}[T])\).
\end{abstract}

\maketitle

\setcounter{section}{-1}

\section{Introduction}

Drinfeld modular forms in rank two were introduced by David Goss in the seventies \cite{Goss80}, \cite{Goss80-2} and, a bit later, by the present author \cite{Gekeler80}. 
The theory, which has striking analogies with the theory of classical elliptic modular forms (see e.g. \cite{Gekeler85}, \cite{Gekeler86}), was continued in the
eighties and nineties (e.g., \cite{Gekeler86-2}, \cite{Gekeler88}, \cite{GekelerReversat96}), where basic facts about the relationship with Drinfeld modular curves, congruence properties,
and the Shimura-Taniyama-Weil uniformization of certain elliptic curves over function fields were established. 

In the meantime, due to the efforts of many mathematicians, Drinfeld modular forms of rank two have turned into a flourishing field of number
theoretic research. The extension of the theory to higher rank \(r > 2\) is relatively new, and is still in its beginning. Dirk Basson \cite{Basson14}, \cite{Basson17},
Basson-Breuer \cite{BassonBreuer17} and Gekeler \cite{Gekeler19} (but see also \cite{Gekeler92}) dealt with special aspects of higher rank modular forms, while Basson-Breuer-Pink in \cite{BassonBreuerPink24}
gave a systematic foundation of the theory, including a comparison between \enquote{algebraic} and \enquote{analytic} Drinfeld modular forms. 
(It should be mentioned that \cite{BassonBreuerPink24} is the essentially unchanged conjunction of three preprints that appeared already in 2017 
in the ArXiv.)

Independently, the present author started his systematic investigations in the field with \cite{Gekeler17}, from which a series of meanwhile seven articles
with title \enquote{On higher rank Drinfeld modular forms I-VII} emanated, e.g. \cite{Gekeler22-2} and \cite{Gekeler25}.

Hecke operators in the higher rank theory were first defined in \cite{BassonBreuerPink24} via double cosets of arithmetic groups. The authors show that they act
\enquote{essentially trivially} on Eisenstein series (loc. cit. Theorem 14.11). It is the aim of the present article to investigate arithmetic
properties of the Hecke action in the technically most simple case of modular forms for the full modular group \(\Gamma = \GL(r, \mathds{F}_{q}[T])\).

For each prime ideal \(\mathfrak{p}\) of \(A = \mathds{F}_{q}[T]\) with monic generator \(\pi\), there exist the \(r+1\) Hecke operators
\(T_{\mathfrak{p}, i}\) (\(0 \leq i \leq r\)), where \(r \geq 2\) is a fixed natural number. Regarded as a correspondence on the set 
\(\mathcal{L}^{r}\) of \(A\)-lattices \(\Lambda\) of rank \(r\) in the field \(C_{\infty}\) of \enquote{complex numbers} (\(C_{\infty}\) is the
completed algebraic closure of \(K_{\infty} = \mathds{F}_{q}((T^{-1}))\)\,), \(T_{\mathfrak{p},i}\) associates with \(\Lambda\) the finite set of
super-lattices \(\widetilde{\Lambda}\) of \(\Lambda\) such that \(\widetilde{\Lambda}/\Lambda\) is isomorphic with \((\mathds{F}_{\mathfrak{p}})^{i}\),
where \(\mathds{F}_{\mathfrak{p}} = A/\mathfrak{p}\). Hence \(T_{\mathfrak{p},0}(\Lambda) = \{ \Lambda \}\), 
\(T_{\mathfrak{p},r}(\Lambda) = \{ \mathfrak{p}^{-1}\Lambda\}\) are trivial, while \(T_{\mathfrak{p},i}\) with \(1 \leq i < r\) is meaningful.

The structure of the abstract local Hecke algebra \(\mathbf{H}_{\mathfrak{p}} = \mathds{Z}[T_{\mathfrak{p},i} \mid 1 \leq i \leq r]\) has been
determined in \cite{Shimura71} Theorem 3.20\footnote{In fact, Shimura worked over the discrete valuation ring \(\mathds{Z}_{(p)} =\) localization of 
\(\mathds{Z}\) over a prime \(p\), but the generalization to the dvr \(A_{(\mathfrak{p})} =\) localization of \(A\) at \(\mathfrak{p}\) is obvious.};
it turns out that the \(T_{\mathfrak{p},i}\) are algebraically independent, and so \(\mathbf{H}_{\mathfrak{p}}\) is a polynomial ring in the 
\(T_{\mathfrak{p},i}\). Letting them act on spaces of modular forms in the usual fashion, operators from \(\mathbf{H}_{\mathfrak{p}}\) and 
\(\mathbf{H}_{\mathfrak{q}}\) (\(\mathfrak{p} \neq \mathfrak{q}\) prime ideals) commute; so we may restrict to studying them separately. 

The algebra \(\mathbf{M}^{r}\) of modular forms for \(\Gamma\) is generated by the \(r\) algebraically independent coefficient forms
\(g_{1}, \dots, g_{r-1}, g_{r} = \Delta\), and the \((q-1)\)-th root \(h\) of \(\Delta\). All these are eigenforms for all the \(T_{\mathfrak{p},i}\),
with powers of \(\pi\) as eigenvalues, see Theorem \ref{Theorem.Basic-coefficient-forms-are-Eigenforms-of-Hecke-operators}. In the case of
rank \(r = 2\), this phenomenon (only the innocent eigenvalues \enquote{powers of \(\pi\)} for a modular form \(f\)) allows for a new structure, the
existence of an \(A\)-expansion for \(f\) is in the style of Lopez \cite{LopezBartolome10} and Petrov \cite{Petrov13}. We wonder which 
consequences this might have in the present framework of \(r > 2\). Here, no \(A\)-expansions beyond those of Eisenstein series 
(see \eqref{Eq.Ek-omega-as-Ek-omega-prime-minus-Goss-polynomials}) are known.

Apart from the preparations and the proof of Theorem \ref{Theorem.Basic-coefficient-forms-are-Eigenforms-of-Hecke-operators}, we also study the 
behaviour of the coefficients \(a_{n}(f)\) of the \(t\)-expansion for \(f = \Delta\), and for some related functions. As the \(a_{n}(f)\) for
\(r > 2\) are not constant but itself (weak) modular forms of lower rank \(r-1\), we restrict our consideration to arguments \(\boldsymbol{\omega}\)
of \(f\) which actually lie in the fundamental domain \(\mathbf{F} = \mathbf{F}^{r}\) for \(\Gamma\); so the \(a_{n}(f)\) are functions on
\(\mathbf{F}^{r-1}\), the fundamental domain in rank \(r-1\). The growth of \(a_{n}(f)\) for \(n \to \infty\) in the sub-region 
\(\mathbf{F}_{\mathbf{k}} \subset \mathbf{F}\) is described in Theorem \ref{Theorem.Fundamental-index}. In particular, the product expansions for 
\(f \in \{h, \Delta\}\) converge uniformly on all of \(\mathbf{F}\). This result (convergence of \(t\)-expansions on all of \(\mathbf{F}\))
holds in fact for all modular forms by Corollary \ref{Corollary.Convergence-of-all-t-expansions-of-modular-forms-on-fundamental-domain}. So we can
safely perform on \(\mathbf{F}\) the usual analytic operations with modular forms.

About the plan of the paper: Section \ref{Section.The-starting-point} introduces the basic concepts, definitions, and terminology. In Section
\ref{Section.Known-results-about-series-expansions-of-modular-forms}, some more technical objects (Goss polynomials, the reciprocal division 
polynomials \(S_{n}(X)\)\,) are presented, which are needed to write down the (known) expansions \ref{Subsection.t-expansions-of-Eisenstein-series} and
\ref{Theorem.Product-expression-discriminant-Delta-omega} of Eisenstein series and the discriminant. In Section \ref{Section.Hecke-operators},
the Hecke operators \(T_{\mathfrak{p},i}\) are studied as correspondences on the set \(\mathcal{L}^{r}\) of rank-\(r\) lattices in \(C_{\infty}\),
and thus as endomorphisms of the group \(\mathds{Z}[\mathcal{L}^{r}]\) of divisors on \(\mathcal{L}^{r}\). The definition is extended to the set
\(\mathcal{L}^{r,\pm}\) of oriented lattices, to allow Hecke actions on modular forms with non-trivial type. We prefer this approach to that via
double cosets as in \cite{BassonBreuerPink24}, as it is more conceptual and transparent.\footnote{But we must note here that the natural extension of this approach
to modular forms with level (say \(\mathfrak{n}\), an ideal of \(A\)) necessarily requires an adelic framework, as level-\(\mathfrak{n}\) structues
on lattices involve different connected components of the moduli scheme \(\mathcal{M}^{r}(\mathfrak{n}) \times C_{\infty}\).}
In order to cover the \(t\)-expansions, we introduce in Section \ref{Section.The-effect-of-Hecke-operators-on-t-expansions} the notion of splitting
of the \(r\)-lattice \(\Lambda\). It allows an induction step from rank \(r\) to rank \(r-1\), the distinction of type-\(1\) and type-\(2\) parts
of the Hecke correspondence and the simple but powerful Proposition \ref{Proposition.T-p-i-on-lattice} that describes the restriction of Hecke
operators to the boundary. The principal technical result of this section is Proposition 
\ref{Proposition.t-expansion-of-f-in-M-k-l-to-power-series-expansion-of-T-p-i-f} on the effect of Hecke operators to \(t\)-expansions. It looks
hair-rising on a first view, but admits at least the induction step in the proof of Theorem 
\ref{Theorem.Basic-coefficient-forms-are-Eigenforms-of-Hecke-operators}. We further add a similar result on the effect to the terms of possible
\(A\)-expansions, with a slight hope that such expansions could exist.

In Section \ref{Section.The-Hecke-action-on-the-basic-modular-forms} we first show the crucial congruence property \ref{Property.Power-series-in-t}
for the expansions of the elementary modular forms of type \(0\), which plays a key role in the proof of Theorem 
\ref{Theorem.Basic-coefficient-forms-are-Eigenforms-of-Hecke-operators}. We then transfer the trivial principle that \enquote{forms that generate
a Hecke-stable one-dimensional space must be eigenforms} from lower-rank algebras \(\mathbf{M}^{r'}\) of modular forms to \(\mathbf{M}^{r}\)
and get that \(g_{1}, \dots, g_{r-1}, \Delta, h\) (along with some other forms) are eigenforms \textbf{a priori}. The eigenvalues are then
determined in Theorem \ref{Theorem.Basic-coefficient-forms-are-Eigenforms-of-Hecke-operators}.

Finally, Section \ref{Section.Growth-coefficients} deals with growth properties of \(t\)-expansion coefficients \(a_{n}(f)\), notably for
\(f = \Delta\). Such expansions have by definition a positive radius of convergence, locally in the boundary point where the expansion takes place.
Our results imply that we have global convergence on the full fundamental domain \(\mathbf{F}\).

\textbf{Notation.} \(r\) is a fixed natural number, the \textbf{rank}. Usually \(r \geq 2\); in some cases \(r = 1\) is tacitly allowed for
induction purposes.

\(\mathds{F} = \mathds{F}_{q}\), the finite field with \(q\) elements, of characteristic \(p\). The quantities \(r\) and \(q\) are mostly omitted
from notation.

\(A = \mathds{F}[T]\), the polynomial ring in an indeterminate \(T\), \(K = \mathds{F}(T)\) its quotient field. The completion \(\mathds{F}((T^{-1}))\)
of \(K\) at infinity is denoted by \(K_{\infty}\), its ring of integers by \(O_{\infty}\), its completed algebraic closure by \(C_{\infty}\).

On \(C_{\infty}\) we have: its absolute value \(\lvert \mathop{.} \rvert\), normalized by \(\lvert T \rvert = q\); \(O_{C_{\infty}}=\{x \in C_{\infty} \mid \lvert x \rvert \leq 1\}\), \(\mathfrak{m}_{C_{\infty}} = \{ x \in C_{\infty} \mid \lvert x \rvert < 1\}\), with 
\(O_{C_{\infty}}/\mathfrak{m}_{C_{\infty}} \overset{\cong}{\rightarrow} \overline{\mathds{F}}\), the algebraic closure of \(\mathds{F}\). 

\begin{center}
	\begin{longtblr}{Q[r,t]Q[c,b]Q[l,t,0.9\textwidth]}
		\(\Psi\)							& \(=\)	& \(\Psi^{r} = \{ \boldsymbol{\omega} = (\omega_{1}, \ldots, \omega_{r}) \in C_{\infty}^{r} \mid \text{the \(\omega_{i}\) are \(K_{\infty}\)-linearly independent}\}\) \\
											&		& and \\
		\(\Omega\)							& \(=\)	& \(\Omega^{r} = \Psi^{r}/C_{\infty}^{*} = \{ \boldsymbol{\omega} \in \mathds{P}^{r-1}(C_{\infty}) \mid \boldsymbol{\omega} \text{ represented by some element of } \Psi^{r}\}\), \\
											&		& often identified with \(\{ \boldsymbol{\omega} = (\omega_{1}, \ldots, \omega_{r-1},1) \} \subset \Psi^{r}\). \\
		\(\Gamma\)							& \(=\)	& \(\GL(r,A)\), with fundamental domain \(\mathbf{F} \subset \Omega\); \\
		\(\mathbf{F}_{\mathbf{k}}\)			& 		& distinguished subspace of \(\mathbf{F}\) associated with \(\mathbf{k} = (k_{1}, \ldots, k_{r}) \in \mathds{N}_{0}^{r}\), \(k_{1} \geq k_{2} \geq \cdots \geq k_{r}=0\); \\
		\(\mathbf{M}\)						& \(=\)	& \(\bigoplus_{k,\ell} M_{k,\ell} = C_{\infty}[g_{1}, \dots, g_{r-1}, g_{r} = \Delta, h]\) the algebra of modular forms for \(\Gamma\), with subalgebra \(\mathbf{M}_{0} = \bigoplus_{k} M_{k,0}\); \\
		\(t\)								& \(=\) & \(t(\boldsymbol{\omega})\) uniformizer of \(\Omega\) \enquote{along the boundary}; \\
		\(\mathds{P}\)						& \(=\)	& \(\mathds{P}^{r-1}_{q-1,\dots,q^{r}-1} = \bigcupdot_{1 \leq i \leq r} \mathcal{M}^{i}\), the compactified moduli scheme, \(\mathcal{M}^{r}(C_{\infty}) = \Gamma \backslash \Omega\); \\
		\(\mathcal{L}\)						& \(=\)	& \(\mathcal{L}^{r}\) (resp. \(\mathcal{L}^{\pm} = \mathcal{L}^{r,\pm}\)) the set of \(A\)-lattices of rank \(r\) (resp. of oriented \(A\)-lattices of rank \(r\)); \\
		\(\phi\)							& \(=\)	& \(\phi^{\boldsymbol{\omega}}\) the generic Drinfeld \(A\)-module of rank \(r\), \(\boldsymbol{\omega} \in \Omega\), with operator polynomial \(\phi_{T}(X) = TX + g_{1}(\boldsymbol{\omega})X^{q} + \dots + g_{r-1}(\boldsymbol{\omega})X^{q^{r-1}} + \Delta(\boldsymbol{\omega})X^{q^{r}}\);\\
		\(\mathds{N}\)						& \(=\)	& \(\{1,2,3,\dots \}\); \\
		\(\mathds{N}_{0}\)					& \(=\)	& \(\{0,1,2,3,\dots \}\); \\
		\(\sideset{}{^{\prime}}\sum\limits_{i \in I} x_{i}\) && is the sum \(\sum_{0 \neq i \in I} x_{i}\), and similarly \(\sideset{}{^{\prime}}\prod x_{i} = \prod_{i \neq 0} x_{i}\); \\
		\(\lvert S \rvert\)					& 		& is the cardinality of the set \(S\).						
	\end{longtblr}
\end{center}

\section{The starting point}\label{Section.The-starting-point}

(See \cite{Gekeler17} and \cite{Gekeler22} for more details)

\subsection{The relevant spaces}

We let 
\begin{align*}
	\Psi 	&= \Psi^{r} = \{ \boldsymbol{\omega} = (\omega_{1}, \ldots, \omega_{r}) \in C_{\infty}^{r} \mid \text{the \(\omega_{i}\) are \(K_{\infty}\)-linearly independent} \} 
	\intertext{and} 
	\Omega	&= \Omega^{r} = \Psi^{r}/C_{\infty}^{*}
\end{align*}
be the \textbf{Drinfeld symmetric spaces}, provided with their natural structures as rigid-analytic spaces over \(K_{\infty}\). They are acted upon
by the group \(\GL(r, K_{\infty})\) and therefore by the modular group \(\Gamma = \GL(r,A)\). Usually we normalize the last coordinate \(\omega_{r}\)
of \(\boldsymbol{\omega} \in \Omega\) to \(\omega_{r} = 1\) and write \(\boldsymbol{\omega} = (\omega_{1}, \dots, \omega_{r-1},1)\); then the
action of \(\gamma = (\gamma_{ij}) \in \GL(r, K_{\infty})\) is by fractional linear transformations
\begin{align}\stepcounter{subsubsection}%
	\gamma(\boldsymbol{\omega})	&= \gamma \boldsymbol{\omega} = \boldsymbol{\omega}'=(\omega_{1}', \dots, \omega_{r-1}',1) \text{ with} \\
			\omega_{i}'			&= \aut(\gamma, \boldsymbol{\omega})^{-1} \sum_{1 \leq j \leq r} \gamma_{i,j}\omega_{j} \quad (1 \leq i < r) \quad \text{and} \nonumber \\
	\aut(\gamma, \boldsymbol{\omega})	&= \sum_{1 \leq j \leq r} \gamma_{r,j} \omega_{j}.	\nonumber 
\end{align}
\(\Omega\) is related with the \textbf{Bruhat-Tits- building} \(\mathcal{BT} = \mathcal{BT}^{r}\) of \(\GL(r,K_{\infty})\) through the surjective 
building map \(\lambda \colon \Omega \to \mathcal{BT}(\mathds{Q})\). Recall that \(\mathcal{BT}\) is a contractible simplicial complex of dimension
\(r-1\) on which \(\GL(r, K_{\infty})\) acts, transitively on simplices of dimension \(i\) (\(0 \leq i < r\)). The set \(\mathcal{BT}(\mathds{R})\)
of points of the realization of \(\mathcal{BT}\) corresponds to the set of similarity classes of norms on the \(K_{\infty}\)-vector space
\(K_{\infty}^{r}\). A norm \(\lVert \mathop{.} \rVert\) represents an integral point (i.e., a point of \(\mathcal{BT}(\mathds{Z}) =\) set of vertices
of \(\mathcal{BT}\)) if and only if \(\lVert \mathop{.} \rVert\) is similar to \(\lVert \mathop{.} \rVert_{L}\), the norm with unit ball \(L\), where
\(L\) is an \(O_{\infty}\)-lattice in \(K_{\infty}^{r}\).

Now \(\lambda\) is the map \(\boldsymbol{\omega} \mapsto \lVert \mathop{.} \rVert_{\boldsymbol{\omega}}\), where 
\(\lVert \mathop{.} \rVert_{\boldsymbol{\omega}}\) is the norm
\begin{equation}\stepcounter{subsubsection}%
	\lVert \mathbf{x} \rVert_{\boldsymbol{\omega}} \defeq \lvert \mathbf{x} \boldsymbol{\omega} \rvert = \Big\lvert \sum_{1 \leq i \leq r} x_{i}\omega_{i} \Big\rvert 
\end{equation}
on \(K_{\infty}^{r}\). As the absolute value \(\lvert \mathop{.} \rvert\) on \(C_{\infty}\) has values in \(q^{\mathds{Q}} \cup \{ 0 \}\), the
class of the norm \(\lVert \mathop{.} \rVert_{\boldsymbol{\omega}}\) belongs in fact to \(\mathcal{BT}(\mathds{Q})\). 

For given \(\mathbf{x} \in \mathcal{BT}(\mathds{Q})\), we let
\begin{equation}\stepcounter{subsubsection}%
	\Omega_{\mathbf{x}} \defeq \{ \boldsymbol{\omega} \in \Omega \mid \lambda(\boldsymbol{\omega}) = \mathbf{x} \}
\end{equation}
be its fiber in \(\Omega\). Then \(\Omega_{\mathbf{x}}\) is an open affinoid subspace of \(\Omega\), the geometry of which depends on the simplex
\(\langle \mathbf{x} \rangle\) spanned by \(\mathbf{x}\), and is described in \cite{Gekeler22} Theorem 2.4. For a holomorphic function \(f\) on \(\Omega\),
\begin{equation}\stepcounter{subsubsection}%
	\lVert f \rVert_{\mathbf{x}} = \max_{\mathbf{x} \in \Omega_{\mathbf{x}}} \lvert f(\boldsymbol{\omega}) \rvert
\end{equation}
denotes its \textbf{spectral norm} on \(\Omega_{\mathbf{x}}\). Basic facts about \(\lambda\) are: 
\subsubsection{}\label{Subsubsection.Invertible-function-on-open-affinoid-subspace}%
if \(f\) is invertible on \(\Omega_{\mathbf{x}}\) then \(\lvert f \rvert\) is constant on \(\Omega_{\mathbf{x}}\);
\subsubsection{}\label{Subsubsection.Invertible-function-on-pre-image-of-closed-simplex}%
if \(f\) is invertible on the pre-image \(\lambda^{-1}(\sigma)\) of a closed simplex \(\sigma\) of \(\mathcal{BT}\), then
\[
	\log f \defeq \log_{q} \lVert f \rVert_{\mathbf{x}}
\]
interpolates linearly on \(\sigma(\mathds{Q})\), see \cite{Gekeler22} Theorem 2.4 and 2.6.

\subsection{The fundamental domain}\label{Subsection.The-fundamental-domain}

A finite subset \(S\) of \(C_{\infty}\) is \textbf{orthogonal} if for each family \((a_{s})_{s \in S}\) of coefficients in \(K_{\infty}\) the
rule
\begin{equation}
	\Big\lvert \sum_{s \in S} a_{s} s \Big\rvert = \max_{s \in S} \lvert a_{s} s \rvert 
\end{equation}
holds. 

An \(A\)-\textbf{lattice} in \(C_{\infty}\) is a finitely generated (hence free of certain rank) \(A\)-submodule \(\Lambda\) that is 
\textbf{discrete} in the sense that it has finite intersection with each ball in \(C_{\infty}\). With each 
\(\boldsymbol{\omega} = (\omega_{1}, \dots, \omega_{r-1},1) \in \Omega\), we associate the lattice
\begin{equation}
	\Lambda_{\boldsymbol{\omega}} = \sum_{1 \leq i \leq r} A \omega_{i}
\end{equation}
of rank \(r\). A \textbf{successive minimal basis} (SMB) of the lattice \(\Lambda\) is an \(A\)-basis \(\{ \lambda_{1}, \dots, \lambda_{r}\}\) which 
is orthogonal and satisfies \(\lvert \lambda_{1} \rvert \leq \lvert \lambda_{2} \rvert \leq \cdots \leq \lvert \lambda_{r} \rvert\). Such 
an SMB always exists and is obtained by choosing \(\lambda_{i} \in \Lambda \smallsetminus \sum_{1 \leq j < i} A\lambda_{j}\) such that 
\(\lvert \lambda_{i}\rvert\) is minimal under this condition, see \cite{Gekeler19} Section 3. Further, the sequence 
\(\lvert \lambda_{1} \rvert, \dots, \lvert \lambda_{r} \rvert\) is an invariant of \(\Lambda\), i.e., uniquely determined. As a consequence, each
\(\boldsymbol{\omega} \in \Omega\) may be transformed by \(\Gamma = \GL(r,A)\) to some \(\boldsymbol{\omega}'\) in
\begin{equation}
	\mathbf{F} \defeq \{ \boldsymbol{\omega} \in \Omega \mid \text{the \(\omega_{i}\) are orthogonal and } \lvert \omega_{1} \rvert \geq \cdots \geq \lvert \omega_{r} \rvert = 1 \};
\end{equation}
then \(\{\omega_{r} = 1, \omega_{r-1}, \dots, \omega_{1}\}\) is an SMB of \(\Lambda_{\boldsymbol{\omega}}\) (note the reverse order!) Hence each
\(\boldsymbol{\omega} \in \Omega\) is \(\Gamma\)-equivalent to at least one and at most finitely many elements of \(\mathbf{F}\).
This is why we call \(\mathbf{F}\) the \textbf{fundamental domain} for \(\Gamma\). It is an open analytic subspace of \(\Omega\) and the
pre-image \(\lambda^{-1}(\mathcal{W})\) under the building map, where \(\mathcal{W}\) is the full subcomplex of \(\mathcal{BT}\) with set of vertices
\begin{equation}\label{Eq.Set-of-vertices-W-IZ}
	\mathcal{W}(\mathds{Z}) = \{ [L_{\mathbf{k}}] \mid \mathbf{k} = (k_{1}, \dots, k_{r}) \in \mathds{N}_{0}^{r}, k_{1} \geq k_{2} \geq \cdots \geq k_{r} = 0\}.
\end{equation}
Here \(L_{\mathbf{k}}\) is the \(O_{\infty}\)-lattice \(O_{\infty} \pi^{k_{1}} \oplus \cdots \oplus O_{\infty}\pi^{k_{r}}\) in \(K_{\infty}^{r}\)
with similarity class \([L_{\mathbf{k}}]\), and \(\pi\) is the uniformizer \(T^{-1}\) of \(K_{\infty}\). (Beware of the sign error in \cite{Gekeler17} 2.2!)

We call \(\mathbf{k} \in \mathds{N}_{0}^{r}\) as in \eqref{Eq.Set-of-vertices-W-IZ} a \textbf{fundamental index}. For such \(\mathbf{k}\), we let
\begin{equation}\label{Eq.Distinguished-subdomain-of-F-defined-by-k}
	\mathbf{F}_{\mathbf{k}} = \lambda^{-1}([L_{\mathbf{k}}]) = \{ \boldsymbol{\omega} \in \Omega \mid \text{the \(\omega_{i}\) orthogonal and } \log_{q} \lvert \omega_{i} \rvert = k_{i} \} \subset \mathbf{F}	
\end{equation}
be the distinguished subdomain of \(\mathbf{F}\) defined by \(\mathbf{k}\).

\subsection{Modular forms}

A holomorphic function \(f\) on \(\Omega\) is a \textbf{Drinfeld modular form} for \(\Gamma\) of \textbf{weight} \(k \in \mathds{N}_{0}\) and
\textbf{type} \(\ell \in \mathds{Z}/(q-1)\) (briefly: of type \((k, \ell)\)) if 
\begin{align}
	&f(\gamma \boldsymbol{\omega}) = \aut(\gamma, \boldsymbol{\omega})^{k} (\det \gamma)^{-\ell} f(\boldsymbol{\omega}) && (\gamma \in \Gamma, \boldsymbol{\omega} \in \Omega) \tag{1.3.1)(i} \label{Eq.Characterization-Drinfeld-modular-form-(i)}\\
	&f \text{ is bounded on } \mathbf{F}. \tag{1.3.1)(ii} \label{Eq.Characterization-Drinfeld-modular-form-(ii)}
\end{align}\stepcounter{equation}%
(Condition (ii) is one of several equivalent ways how the boundary condition for \(f\) can be expressed, see \cite{Gekeler22-2}, Theorems 7.4 and 7.9.) Assuming
only condition (i), we call \(f\) a \textbf{weak modular form} of type \((k,\ell)\). We let \(M_{k,\ell}\) be the \(C_{\infty}\)-vector space
of modular forms of type \((k,\ell)\) and 
\begin{equation}
	\mathbf{M} = \mathbf{M}^{r} = \bigoplus_{\substack{k \in \mathds{N}_{0} \\ \ell \in \mathds{Z}/(q-1)}} M_{k,\ell}
\end{equation}
be the doubly graded algebra of modular forms with subalgebra \(\mathbf{M}_{0} = \bigoplus_{k \in \mathds{N}_{0}} M_{k,0}\).

\begin{Remarks-nn}
	\begin{enumerate}[wide]
		\item[(i)] As function spaces on \(\Omega\), the various \(M_{k,\ell}\) are linearly independent, so their sum is in fact direct.
		\item[(ii)] The definition of modular forms may easily be extended to congruence subgroups \(\Gamma'\) of \(\Gamma\), by requiring
		condition (i) for \(\gamma \in \Gamma'\) only and \(\lvert f_{[\gamma]_{k,\ell}} \rvert\) bounded on \(\mathbf{F}\) for a set of
		representatives of the finite set \(\Gamma' \backslash \Gamma\), with the transform
		\[
			f_{[\gamma]_{k,\ell}}(\boldsymbol{\omega}) \defeq \aut(\gamma, \boldsymbol{\omega})^{{-}k} (\det \gamma)^{\ell} f(\gamma \boldsymbol{\omega}).
		\]	
		\item[(iii)] If \(0 \neq f \in M_{k,\ell}\) then \(k = r\ell \pmod{q-1}\).  
	\end{enumerate}
\end{Remarks-nn}\stepcounter{equation}%

The most simple example of Drinfeld modular form is the \textbf{Eisenstein series}, defined by
\begin{equation}
	E_{k}(\boldsymbol{\omega}) \defeq \sideset{}{^{\prime}} \sum_{\mathbf{a} \in A^{r}} (\mathbf{a} \boldsymbol{\omega})^{{-}k},
\end{equation}
which gives an element of \(M_{k,0}\), nonzero if \(k \equiv 0 \pmod{q-1}\). Here 
\(\mathbf{a} \boldsymbol{\omega} = \sum_{1 \leq i \leq r} a_{i}\omega_{i}\), and the primed sum \(\sideset{}{^{\prime}}\sum\) is, as always, 
the sum over the non-zero elements \(\mathbf{a}\) of the index set.

\subsection{Drinfeld modules}\label{Subsection.Drinfeld-modules}

We let \(\mathcal{L} = \mathcal{L}^{r}\) be the set of \(A\)-lattices of rank \(r\) in \(C_{\infty}\). With \(\Lambda \in \mathcal{L}\) we 
can associate
\begin{itemize}
	\item the \textbf{exponential function} \(e^{\Lambda} \colon C_{\infty} \to C_{\infty}\);
	\item the \textbf{Drinfeld module} \(\phi^{\Lambda} = C_{\infty}/\Lambda\) over \(C_{\infty}\).	
\end{itemize}
The exponential function is defined by the product
\begin{equation}
	e^{\Lambda}(z) = z \sideset{}{^{\prime}} \prod_{\lambda \in \Lambda} (1 - z/\lambda);
\end{equation}
it is everywhere convergent and may be written as an entire power series
\begin{equation}\label{Eq.Powerseries-expansion-exponential-function}
	e^{\Lambda}(z) = \sum_{i \geq 0} \alpha_{i}z^{q^{i}}
\end{equation}
with \(\alpha_{0} = 1\) and \(c^{q^{i}} \alpha_{i} \to 0\) for each \(c \in C_{\infty}\). The Drinfeld module \(\phi^{\Lambda}\) is an exotic structure
of \(A\)-module on \(C_{\infty}\) characterized by a commutative diagram with exact rows 
\begin{equation}
	\begin{tikzcd}
		0 \ar[r] 	& \Lambda \ar[r] \ar[d, "\times a"] & C_{\infty} \ar[r, "e^{\Lambda}"] \ar[d, "\times a"]	& C_{\infty} \ar[r] \ar[d, "\phi_{a}^{\Lambda}"]	& 0 \\
		0 \ar[r]	& \Lambda \ar[r]						& C_{\infty} \ar[r, "e^{\Lambda}"]						& C_{\infty} \ar[r]										& 0	
	\end{tikzcd}
\end{equation}
for each \(a\) in \(A\). The maps \(\phi_{a}^{\Lambda}\) are uniquely determined by \(\phi_{T}^{\Lambda}\), which is an \(\mathds{F}\)-linear
polynomial
\begin{equation}
	\phi_{T}^{\Lambda}(X) = TX + g_{1}X^{q} + \cdots + g_{r}X^{q^{r}} \qquad (g_{1}, \dots, g_{r} \in C_{\infty}).
\end{equation}
The coefficient \(g_{r}\) is always non-zero, and is called the \textbf{discriminant} \(\Delta\) of \(\Phi^{\Lambda}\). For \(a \in A\) of
degree \(d\), \(\phi_{a}^{\Lambda}\) is a polynomial of shape \(\phi_{a}^{\Lambda}(X) = \sum_{0 \leq i \leq rd} \, \prescript{}{a} \ell_{i} X^{q^{i}}\),
with \(\prescript{}{a}\ell_{0} = a\), \(\prescript{}{a}\ell_{rd} \neq 0\). Conversely, given \(g_{1}, \dots, g_{r} \in C_{\infty}\) with
\(\Delta = g_{r} \neq 0\), there exists a unique \(r\)-lattice \(\Lambda \in \mathcal{L}\) such that \((g_{1}, \dots, g_{r})\) are the coefficients
of \(\phi_{T}^{\Lambda}\).

\subsection{Modular forms associated with Drinfeld modules}\label{Subsection.Modular-forms-associated-with-Drinfeld-modules}
Consider the \(C_{\infty}\)-valued function on \(\Omega\) that maps \(\boldsymbol{\omega}\) to the coefficient \(g_{i}\) of the Drinfeld module
\(\phi^{\boldsymbol{\omega}} \defeq \phi^{\Lambda_{\boldsymbol{\omega}}}\), where \(1 \leq i \leq r\). This gives a function 
\(\boldsymbol{\omega} \mapsto g_{i}(\boldsymbol{\omega})\) that may be verified to be an element of \(M_{q^{i}-1,0}\); i.e., it is holomorphic
and satisfies the conditions \eqref{Eq.Characterization-Drinfeld-modular-form-(i)}, \eqref{Eq.Characterization-Drinfeld-modular-form-(ii)}.
Similarly, for \(0 \neq a \in A\) and \(1 \leq i \leq r \cdot \deg a\), the coefficient \(\prescript{}{a}\ell_{i}\) defines a modular form
of weight \(q^{i}-1\) and type 0, a \textbf{coefficient form}. The \(g_{i} = \prescript{}{T}\ell_{i}\) are called the \textbf{basic coefficient forms}.
Finally, \(\boldsymbol{\omega} \mapsto \alpha_{i}(\boldsymbol{\omega}) =\) the coefficient \(\alpha_{i}\) of \(e^{\Lambda_{\boldsymbol{\omega}}}(z)\)
(see \eqref{Eq.Powerseries-expansion-exponential-function}) defines a modular form \(\alpha_{i} \in M_{q^{i}-1,0}\). It is called the
\textbf{para-Eisenstein series} of weight \(q^{i}-1\), as it shares some properties with the special Eisenstein series \(E_{q^{i}-1}\) of weight 
\(q^{i}-1\).

Describing modular forms with non-zero type is less trivial. It is known (\cite{Gekeler17} Theorem 3.8) that there exists a \((q-1)\)-th root \(h\) of the
discriminent function \(\Delta\) as a holomorphic function on \(\Omega\). There are several \enquote{natural} normalizations of \(h\); we
use the one that satisfies
\begin{equation}\stepcounter{subsubsection}%
	h^{q-1}(\boldsymbol{\omega}) = ({-}1)^{r-1} \Delta(\boldsymbol{\omega}). \label{Eq.Natural-normalization-of-h}
\end{equation}
It turns out that \(h\) is a modular form of weight \((q^{r}-1)/(q-1)\) and type \(1\).
\subsubsection{}\stepcounter{equation}%
The ring \(\mathbf{M}_{0} = \bigoplus_{k \geq 0} M_{k,0}\) of modular forms of type \(0\) may be written as the polynomial algebra in either 
\begin{enumerate}
	\item[(a)] \(g_{1}, \dots, g_{r} = \Delta\), or in
	\item[(b)] \(\alpha_{1}, \dots, \alpha_{r}\), or in
	\item[(c)] the special Eisenstein series \(E_{q-1}, E_{q^{2}-1}, \dots, E_{q^{r}-1}\). 
\end{enumerate}
The full algebra \(\mathbf{M} = \bigoplus_{k,\ell} M_{k,\ell}\) of all modular forms is generated over \(\mathbf{M}_{0}\) by \(h\) with the single 
relation \eqref{Eq.Natural-normalization-of-h}. For later use, we give the formulas that allow to determine the other forms from the special
Eisenstein series, see e.g. \cite{Gekeler86-2} p.13:
\begin{align}
	g_{k}	&= (T^{q^{k}} - T) E_{q^{k}-1} + \sum_{1 \leq j < k} E_{q^{k-j}-1} g_{j}^{q^{k-j}} && (1 \leq k \leq r) \label{Eq.Basic-coefficient-form-Eisenstein-series}
	\intertext{and}
	\sum_{\substack{i,j \geq 0 \\ i+j=k}} \alpha_{i} E_{q^{j}-1}^{q^{i}} &= \sum_{\substack{i,j \geq 0 \\ i+j=k}} E_{q^{i}-1}\alpha_{j}^{q^{i}} = 0, &&(k \in \mathds{N})	\label{Eq.Coefficients-of-exponential-function}
\end{align}
where \(\alpha_{1} = 1\) and \(E_{0} = {-}1\).

\subsection{The uniformizer}\label{Subsection.The-uniformizer}%
Write \(\boldsymbol{\omega} \in \Omega\) as \((\omega_{1}, \boldsymbol{\omega}')\) with 
\begin{align*}
	\boldsymbol{\omega}'=(\omega_{2}, \dots, \omega_{r-1}, \omega_{r}=1) \in \Omega^{r-1} && \text{(in case \(r=2\), \(\Omega^{r-1}=\Omega^{1}\defeq\{1\}\))}.
\end{align*}
We define the function \(t \colon \Omega \to C_{\infty}\) through
\begin{equation}
	t(\boldsymbol{\omega}) = (e^{\Lambda_{\boldsymbol{\omega}'}}(\omega_{1}))^{-1},
\end{equation}
where \(\Lambda_{\boldsymbol{\omega}'} = \sum_{2 \leq i \leq r} A\omega_{i}\) is the \((r-1)\)-lattice defined by \(\boldsymbol{\omega}'\).

It is holomorphic, vanishes nowhere on \(\Omega\), and satisfies
\begin{equation}
	t(\gamma \boldsymbol{\omega}) = \gamma_{1,1}^{-1} \aut(\gamma, \boldsymbol{\omega}) t(\boldsymbol{\omega})
\end{equation}
if \(\gamma \in \Gamma\) is such that \(\gamma_{2,1} = \gamma_{3,1} = \cdots = \gamma_{r,1} = 0\). In particular,
\begin{equation}\label{Eq.Invariance-of-t-under-transformation}
	t \text{ is invariant under } (\omega_{1}, \boldsymbol{\omega}') \mapsto (\omega_{1} + \lambda, \boldsymbol{\omega}'), \quad \lambda \in \Lambda_{\boldsymbol{\omega}'}.
\end{equation}
Each holomorphic function \(f\) on \(\Omega\) subject to \eqref{Eq.Invariance-of-t-under-transformation} has a Laurent expansion
\begin{equation}
	f(\boldsymbol{\omega}) = \sum_{n \in \mathds{Z}} a_{n}(\boldsymbol{\omega}') t^{n}(\boldsymbol{\omega})
\end{equation}
with respect to \(t\), where the \(a_{n}\) are holomorphic functions on \(\Omega^{r-1}\) (see \cite{Gekeler22-2} or \cite{BassonBreuerPink24}), and the sum converges for 
\(\lvert t \rvert\) small enough, locally in \(\boldsymbol{\omega}'\). If \(f\) is weakly modular (condition 
\eqref{Eq.Characterization-Drinfeld-modular-form-(i)}), the condition
\begin{equation}
	a_{n} \equiv 0 \text{ for } n < 0
\end{equation}
is equivalent with \eqref{Eq.Characterization-Drinfeld-modular-form-(ii)}, i.e., with \(f\) being modular. If even \(a_{n} \equiv 0\) for \(n \leq 0\),
\(f\) is called a \textbf{cusp form}. We let \(S_{k,\ell}\) be the space of cusp forms of type \((k,\ell)\). If \(f\) is modular of type \((k,\ell)\)
then
\begin{equation}
	a_{n} \equiv 0 \text{ unless } n \equiv k-r\ell \pmod{q-1}.
\end{equation}
It follows that \(\mathbf{S} \defeq \mathbf{S}^{r} = \bigoplus_{k,\ell} S_{k,\ell}\) is the ideal of \(\mathbf{M}\) generated by \(h\) and
\(\mathbf{S} \cap \mathbf{M}_{0}\) the ideal of \(\mathbf{M}_{0}\) generated by \(\Delta\).

\subsection{The moduli scheme}\label{Subsection.The-moduli-scheme}%
We let \(\mathds{P} = \mathds{P}_{q-1,\dots,q^{r}-1}^{r-1}\) be the weighted projective space \cite{Dolgachev82} of dimension \(r-1\) with system 
\((q-1, q^{2}-1, \dots, q^{r}-1)\) of weights. That is, its set of \(C_{\infty}\)-points is
\begin{equation}
	\mathds{P}(C_{\infty}) = \{ (x_{1} : \ldots : x_{r}) \mid (x_{1}, \ldots, x_{r}) \neq \mathbf{0} \},
\end{equation}
where the equivalence relation is \((x_{1}: \ldots : x_{r}) = (y_{1}: \ldots : y_{r})\) if and only if there exists \(0 \neq c \in C_{\infty}\)
such that \(y_{i} = c^{q^{i}-1}x_{i}\) for \(1 \leq i \leq r\). It follows from the \enquote{Weierstraß theory} of Drinfeld modules sketched
in \ref{Subsection.Drinfeld-modules} and \ref{Subsection.Modular-forms-associated-with-Drinfeld-modules} that
\begin{equation}
	(g_{1}(\boldsymbol{\omega}) : \ldots : g_{r}(\boldsymbol{\omega})) = (g_{1}(\boldsymbol{\eta}): \ldots : g_{r}(\boldsymbol{\eta}))
\end{equation}
in \(\mathds{P}(C_{\infty})\) if and only if the points \(\boldsymbol{\omega}, \boldsymbol{\eta} \in \Omega\) are conjugate under
\(\Gamma = \GL(r,A)\). As \(\Gamma\) operates discontinuously on \(\Omega\), there is a well-defined quotient analytic space 
\(\Gamma \backslash \Omega\), and the above shows that the map
\begin{equation}
	\begin{split}
		\Gamma \backslash \Omega 				&\longrightarrow \mathds{P}(C_{\infty}) \\
		\text{class of } \boldsymbol{\omega}		&\longmapsto (g_{1}(\boldsymbol{\omega}): \ldots : g_{r}(\boldsymbol{\omega}))
	\end{split}
\end{equation}
is well-defined and injective. Actually it is an open embedding of analytic spaces and identifies \(\Gamma \backslash \Omega\) with the subspace
\begin{equation}
	\mathcal{M}^{r}(C_{\infty}) \defeq \{ (g_{1}:\ldots:g_{r}) \in \mathds{P}(C_{\infty}) \mid g_{r} \neq 0 \}.
\end{equation}
Hence
\begin{equation}
	\overline{\mathcal{M}}^{r}(C_{\infty}) = \mathds{P}(C_{\infty}) = \bigcupdot_{1 \leq i \leq r} \mathcal{M}^{i}(C_{\infty}),
\end{equation}
where the similarly built
\begin{align*}
	\mathcal{M}^{i}(C_{\infty}) = \{ (g_{1}:\ldots:g_{i}) \mid g_{i} \neq 0 \} \subset \mathds{P}^{i-1}_{q-1,\ldots,q^{i}-1} &&(2 \leq i \leq r)	
\end{align*}
and \(\mathcal{M}^{1}(C_{\infty}) = \{1\}\) are naturally considered as the subspaces of \(\mathds{P}(C_{\infty})\) with last \(r-i\) coordinates
vanishing. As the notation indicates, \(\mathcal{M}^{i}(C_{\infty})\) is the set of \(C_{\infty}\)-points of the coarse moduli scheme 
\(\mathcal{M}^{i}\) for Drinfeld \(A\)-modules of rank \(i\), with natural compactification \(\overline{\mathcal{M}}^{i}\), which also equals the
Zariski closure of \(\mathcal{M}^{i}\) in \(\overline{\mathcal{M}}^{r}\). In the same vein, we write 
\begin{equation}
	\overline{\Omega} = \overline{\Omega}^{r} = \bigcupdot_{1 \leq i \leq r} \Omega^{i} \quad \text{and} \quad \overline{\mathbf{F}} = \overline{\mathbf{F}}^{r} = \bigcupdot_{1 \leq i \leq r} \mathbf{F}^{i}
\end{equation}
where \(\mathbf{F}^{i} = \{ (0,\ldots,0,\omega_{r-i+1}, \ldots, \omega_{r}=1) \mid \text{the \(\omega_{i}\) are orthogonal and } \lvert \omega_{r-i+1}\rvert \geq \cdots \geq \lvert \omega_{r} \rvert = 1 \}\).

\section{Known results about series expansions of modular forms}\label{Section.Known-results-about-series-expansions-of-modular-forms}
\subsection{} In order to state these, we need the following. Let \(\Lambda \subset C_{\infty}\) be a discrete (see 
\ref{Subsection.The-fundamental-domain}) \(\mathds{F}\)-subspace of \(C_{\infty}\). All the quantities that appear depend on \(\Lambda\),
which usually is omitted from notation. Let
\begin{equation}
	e(z) = e^{\Lambda}(z) = z \sideset{}{^{\prime}} \prod_{\lambda \in \Lambda} (1 - z/\lambda) = \sum_{k \in \mathds{N}_{0}} \alpha_{k}z^{q^{k}}
\end{equation}
be its exponential function, which generalizes \eqref{Eq.Powerseries-expansion-exponential-function}. It has a local inverse with respect to insertion
\begin{equation}
	\log^{\Lambda}(z) = \sum_{k \in \mathds{N}_{0}} \beta_{k}z^{q^{k}},
\end{equation}
that is, on a small ball around \(0\), \((e \circ \log)(z) = z = (\log \circ e)(z)\). We have
\begin{align}
	\beta_{k} = {-}\sideset{}{^{\prime}} \sum_{\lambda \in \Lambda} \lambda^{1-q^{k}} &&(k > 0) \text{ and } \beta_{0} = 1,
\end{align}
so \(\beta_{k}\) is minus the Eisenstein series \(E_{q^{k}-1}\) evaluated on \(\Lambda\) if \(\Lambda\) is an \(A\)-lattice of rank \(r\).
Define
\begin{equation}
	s_{k}^{\Lambda}(z) = \sum_{\lambda \in \Lambda} \frac{1}{(z-\lambda)^{k}},
\end{equation}
a meromorphic function on \(C_{\infty}\). Then
\begin{equation}
	s_{1}^{\Lambda}(z) = \frac{1}{e^{\Lambda}(z)},
\end{equation}
and the following important property holds. (All of this is shown e.g. in \cite{Gekeler88} Sections 2 and 3.)

\begin{Proposition}\label{Proposition.Characterization-Goss-polynomials}
	There exists a sequence \(G_{k}(X) = G_{k,\Lambda}(X)\) of polynomials over \(C_{\infty}\) (\(k=1,2,3,\dots\)), the \textbf{Goss polynomials of}
	\(\Lambda\), such that:
	\begin{enumerate}[label=\(\mathrm{(\roman*)}\)]
		\item \(s_{k}(z) = G_{k}(s_{1}(z))\);
		\item \(G_{k}\) is monic of degree \(k\);
		\item \(G_{k}(0) = 0\);
		\item \(G_{k}(X) = X^{k}\) if \(k \leq q\);
		\item \(G_{pk} = (G_{k})^{p}\) (\(p = \Characteristic(\mathds{F})\));
		\item \(X^{2}G_{k}'(X) = k G_{k+1}(X)\);
		\item with \(G_{k} = 0\) for \(k \leq 0\), the recursion holds:
		\[
			G_{k}(X) = X(G_{k-1}(X) + \alpha_{1}G_{k-q}(X) + \alpha_{2}G_{k-q^{2}}(X) + \cdots );
		\]
		\item if \(k = q^{j} - 1\) then
		\[
			G_{k}(X) = \sum_{0 \leq i < j} \beta_{i}X^{q^{j}-q^{i}};
		\]
		\item if \(\Lambda\) has finite \(\mathds{F}\)-dimension \(m\) then \(G_{k}(X)\) is divisible by \(X^{n}\) with \(n = [k/q^{m}] + 1\).
	\end{enumerate}	
\end{Proposition}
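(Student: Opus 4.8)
The plan is to build everything on two elementary analytic facts about $e = e^{\Lambda}$ and to pass from identities of functions to identities of polynomials using that $s_1$ is transcendental over $C_\infty$. First I would record that $e$ is $\mathds{F}$-linear, so its formal derivative is $e'(z) = \alpha_0 = 1$; differentiating the product expansion then gives the logarithmic-derivative identity $e'/e = \sum_{\lambda}(z-\lambda)^{-1} = s_1$, whence $s_1 = 1/e$ and, differentiating once more, $s_1' = -e'/e^2 = -s_1^2$. Differentiating the defining series of $s_k$ termwise yields $s_k' = -k\,s_{k+1}$. Since $s_1$ is a non-constant meromorphic function, it is transcendental over $C_\infty$, so any polynomial relation $P(s_1)\equiv 0$ among functions forces $P=0$; this is the device that turns each functional identity below into an identity of polynomials in $X$.

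Next I would produce the recursion (vii), which simultaneously establishes existence in (i). Using additivity $e(z-Y)=e(z)-e(Y)$, the generating series telescopes:
\[
	\sum_{k \geq 1} s_k(z)\,Y^{k-1} = \sum_{\lambda} \frac{1}{(z-Y)-\lambda} = s_1(z-Y) = \frac{1}{e(z)-e(Y)}.
\]
Clearing the denominator and comparing coefficients of $Y^{k-1}$ gives $e(z)\,s_k = s_{k-1} + \alpha_1 s_{k-q} + \alpha_2 s_{k-q^2} + \cdots$, i.e. (after multiplying by $s_1 = 1/e$) the functional form of (vii). Setting $G_1 = X$ and reading (vii) as a definition, induction on $k$ produces polynomials $G_k$ with $s_k = G_k(s_1)$, which is (i); the same induction yields that $G_k$ is monic of degree $k$ with $G_k(0)=0$ (the leading factor $X$ forces the vanishing constant term), giving (ii) and (iii), while (iv) follows because for $k \leq q$ only the $G_{k-1}$ term survives, so $G_k = X\,G_{k-1} = X^k$. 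Property (vi) is then immediate from the chain rule: $-k\,G_{k+1}(s_1) = s_k' = G_k'(s_1)\,s_1' = -s_1^2 G_k'(s_1)$, so $X^2 G_k' = k G_{k+1}$ by transcendence. Property (v) is the Frobenius identity $s_{pk} = \big(\sum_{\lambda}(z-\lambda)^{-k}\big)^p = s_k^p$, which gives $G_{pk} = G_k^p$. For (ix), when $\dim_{\mathds{F}}\Lambda = m$ the product $e$ is a polynomial of degree $q^m$, so $\alpha_i = 0$ for $i > m$ and the recursion (vii) truncates at $\alpha_m G_{k-q^m}$; an easy induction on $k$ then bounds the order of vanishing of $G_k$ at $X=0$ from below by $[k/q^m]+1$, since the smallest contribution comes from the term $G_{k-q^m}$.

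The step I expect to be the real obstacle is (viii). Writing $\tilde G_j(X) = \sum_{0 \leq i < j}\beta_i X^{q^j - q^i}$ for the claimed answer, I would first check that $\tilde G_j$ and $G_{q^j-1}$ have the same derivative: in $\tilde G_j'$ every exponent $q^j - q^i$ with $i \geq 1$ is divisible by $p$ and drops out, leaving $\tilde G_j' = -X^{q^j-2}$, which also equals $G_{q^j-1}'$ because $X^2 G_{q^j-1}' = (q^j-1)G_{q^j} = -X^{q^j}$ by (vi) together with $G_{q^j} = X^{q^j}$ from (iv) and (v). Hence $G_{q^j-1} - \tilde G_j$ is a polynomial in $X^p$, and the difficulty is precisely to kill this Frobenius-twisted ambiguity, which the first-order data cannot detect. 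I would resolve it by induction on $j$, matching the $p$-th-power parts through the relation $\log \circ\, e = \mathrm{id}$ that links the $\beta_i$ to the $\alpha_i$ (equivalently, comparing the pole of order $q^j-1$ of $s_{q^j-1}$ at a lattice point with the expansion of $\tilde G_j(s_1)$); the characteristic-$p$ bookkeeping here is the only genuinely delicate part of the argument.
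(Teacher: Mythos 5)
The paper itself contains no proof of this proposition: it is quoted with the pointer to \cite{Gekeler88}, Sections 2 and 3, so your proposal can only be measured against that standard treatment. Your handling of (i)--(vii) and (ix) is correct and is, in substance, exactly that treatment: additivity of \(e\) gives \(\sum_{k\geq 1}s_k(z)Y^{k-1}=(e(z)-e(Y))^{-1}\), comparison of coefficients gives the functional form of (vii), the recursion then defines the \(G_k\) and yields (i)--(iv) by induction, Frobenius gives (v), the identities \(s_k'=-k\,s_{k+1}\), \(s_1'=-s_1^2\) together with the transcendence of \(s_1\) over \(C_\infty\) give (vi), and the truncation \(\alpha_i=0\) for \(i>m\) gives (ix).

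The genuine gap is (viii), exactly where you flagged it, and the repair you sketch is not an argument. Your derivative computation is right, but it determines \(G_{q^j-1}\) only modulo \(C_\infty[X^p]\), and in characteristic \(p\) that ambiguity is essentially the whole polynomial: for \(q=p=2\), \(j=3\) one has \(G_7=X^7+\alpha_1X^6+(\alpha_1^3+\alpha_2)X^4\), of which the derivative sees only the term \(X^7\). Neither of the tools you invoke closes this: (v) says nothing at \(k=q^j-1\) (an index prime to \(p\)), and an induction on \(j\) has nothing to grip, because the recursion (vii) at \(k=q^j-1\) calls \(G_{q^j-2},G_{q^j-1-q},\dots\), none of which is of the form \(q^{j'}-1\). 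Here is a completion in the spirit of your parenthetical remark. Write \(G_{q^j-1}(X)=\sum_{1\leq m\leq q^j-1}c_mX^m\) (no constant term, by (iii)). Then by (i),
\[
	e(z)^{q^j}s_{q^j-1}(z)=\sum_{1\leq m\leq q^j-1}c_m\,e(z)^{q^j-m}
\]
is a polynomial in \(e(z)\) of degree \(\leq q^j-1\) without constant term, and so is \(\sum_{0\leq i<j}\beta_ie(z)^{q^i}\). Near \(z=0\) both equal \(z+O(z^{q^j})\): for the first, writing \(s_{q^j-1}(z)=z^{1-q^j}+\sum_{0\neq\lambda\in\Lambda}(z-\lambda)^{1-q^j}\) gives
\[
	e(z)^{q^j}s_{q^j-1}(z)=z\Big(\tfrac{e(z)}{z}\Big)^{q^j}+O(z^{q^j})=z+O(z^{q^j});
\]
for the second, the identity \(\log\circ e=\id\) near \(0\), i.e. \(z=\sum_{i\geq 0}\beta_ie(z)^{q^i}\), shows that \(z-\sum_{0\leq i<j}\beta_ie(z)^{q^i}=\sum_{i\geq j}\beta_ie(z)^{q^i}\) has \(z\)-order \(\geq q^j\). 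Hence the difference of the two expressions is \(P(e(z))\) for a polynomial \(P\) with \(\ord_{X=0}P\leq\deg P\leq q^j-1\), yet vanishing at \(z=0\) to order \(\geq q^j\). Since \(e(z)=z\cdot(\text{unit})\) near \(0\), the \(z\)-order of \(P(e(z))\) equals \(\ord_{X=0}P\), which forces \(P=0\); dividing by \(e(z)^{q^j}\) and invoking the transcendence of \(s_1=1/e\) then gives (viii) as an identity of polynomials. Without this step (or an equivalent), your proposal proves (i)--(vii) and (ix) but not the proposition as stated.
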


We will have use for the Goss polynomials when \(\Lambda\) is either the \(n\)-torsion submodule of or the lattice associated with a Drinfeld module.

\subsection{} We write \(\boldsymbol{\omega} \in \Omega\) as \((\omega_{1}, \boldsymbol{\omega}')\) with 
\(\boldsymbol{\omega}' \in \Omega' = \Omega^{r-1}\), so \(\boldsymbol{\omega}' = 1\) if \(r=2\). Accordingly, \(\phi = \phi^{\boldsymbol{\omega}}\)
(resp. \(\phi' = \phi^{\boldsymbol{\omega}'}\)) will be the Drinfeld module of rank \(r\) (resp. \(r-1\)) associated with \(\boldsymbol{\omega}\)
(resp. \(\boldsymbol{\omega}'\)), corresponding to \(\Lambda = \Lambda_{\boldsymbol{\omega}}\) (resp. \(\Lambda' = \Lambda_{\boldsymbol{\omega}'} = \sum_{2 \leq i \leq r} A\omega_{i}\)). So \(\phi\) (resp. \(\phi'\)) is the \enquote{generic} Drinfeld module varying over \(\Omega\)
(resp. \(\Omega'\)).

Fix a \textbf{monic} (i.e., the leading coefficient equals \(1\)) element \(n\) of \(A\) of degree \(d \geq 0\). We define a polynomial \(S_{n}(X)\),
whose coefficients are weak modular forms in \(\boldsymbol{\omega}'\) (if \(r > 2\)), or are constant if \(r = 2\).

\subsection{} Let \(\phi_{n}'(X) = nX + \prescript{}{n}\ell_{1}X^{q} + \cdots + \prescript{}{n}\ell_{(r-1)d}X^{q^{(r-1)d}}\) be the \(n\)-th operator
polynomial of \(\phi'\). As is easily seen, the top coefficient is 
\[
	\Delta_{n}' \defeq \prescript{}{n}\ell_{(r-1)d} = (\Delta')^{(q^{(r-1)d}-1)/(q^{r-1}-1)} \neq 0,
\]
where \(\Delta'\) is the discriminant of \(\phi'\), that is, the leading coefficient of \(\phi_{T}'(X)\). Then
\begin{multline}\label{Eq.Polynomial-expression-for-Sn}
	S_{n}(X) \defeq (\Delta_{n}')^{-1}X^{q^{(r-1)d}} \phi_{n}'(X^{-1}) \\ = 1 + \frac{\prescript{}{n}\ell_{(r-1)d-1}}{\Delta_{n}'} X^{q^{(r-1)d}-q^{(r-1)d-1}} + \cdots + \frac{n}{\Delta_{n}'} X^{q^{(r-1)d}-1}.
\end{multline}
Note that \(S_{n}(0) = 1\), \(\deg S_{n} = q^{(r-1)d}-1\), \(S_{n}\) is sparse (only few non-zero coefficients), and is in fact a polynomial
in \(X^{q-1}\). As announced, the coefficients \(\prescript{}{n}\ell_{i}/\Delta_{n}'\) are weak modular forms in \(\boldsymbol{\omega}'\).

\subsection{}\label{Subsection.n-th-variant-tn-of-uniformizer-of-t}%
Next, we define the \(n\)-th variant \(t_{n}\) of the uniformizer \(t\) of \ref{Subsection.The-uniformizer} as\footnote{This should
not be confused with the quantity \(t_{\mathfrak{n}}\) that appears in \cite{Gekeler25} Section 9. While that \(t_{\mathfrak{n}}\) intuitively means
\enquote{taking the \(n\)-th root of \(t\)}, the present \(t_{n}\) is analogous with \enquote{taking the \(n\)-th power of \(t\)}.}
\begin{align}
	t_{n}(\boldsymbol{\omega})	&\defeq t(n\omega_{1}, \boldsymbol{\omega}').
	\intertext{Then}
	t_{n}(\boldsymbol{\omega})	&= (e^{\Lambda'}(n\omega_{1}))^{-1} \label{Eq.tn-omega-as-power-series-in-t} \\
								&= ( \phi_{n}'(e^{\Lambda'}(\omega_{1})))^{-1} \nonumber \\
								&= (\phi_{n}'(t^{-1}))^{-1}	&&(t = t(\boldsymbol{\omega})) \nonumber \\
								&= (\Delta_{n}')^{-1} t^{q^{(r-1)d}}/S_{n}(t). \nonumber
\end{align}
Regarded as a power series in \(t\), it has order \(q^{(r-1)d}\) and weak modular forms in \(\boldsymbol{\omega}'\) as coefficients.

\subsection{}\label{Subsection.t-expansions-of-Eisenstein-series}%
Now we are able to give the \(t\)-expansion of Eisenstein series. Assume that \(k>0\) is divisible by \(q-1\) (otherwise \(E_{k} \equiv 0\)). Then 
\begin{align*}
	E_{k}(\boldsymbol{\omega})	&= \sum_{a \in A} ~ \sideset{}{^{\prime}}\sum_{\mathbf{b} \in A^{r-1}} \frac{1}{(a\omega_{1} + \mathbf{b}\boldsymbol{\omega}')^{k}} 
	\intertext{( \(\displaystyle\sum \sideset{}{^{\prime}} \sum\) means: if \(a = 0\) then \(\mathbf{b} \neq \mathbf{0}\))} 
								&= E_{k}(\boldsymbol{\omega}') + \sum_{a \neq 0} s_{k}^{\Lambda'}(a\omega_{1})
	\intertext{(by definition of \(s_{k}^{\Lambda'}\) with \(\Lambda' = \Lambda_{\boldsymbol{\omega}'}\))}
								&= E_{k}(\boldsymbol{\omega}') - \sum_{a \in A \text{ monic}} s_{k}^{\Lambda'}(a \omega_{1})		&& (\text{as } \sum_{c \in \mathds{F}^{*}} c^{{-}k} = {-}1) \\
								&= E_{k}(\boldsymbol{\omega}') - \sum_{a \text{ monic}} G_{k, \Lambda'}(s_{1}^{\Lambda'}(a \omega_{1}))	&&\text{by \ref{Proposition.Characterization-Goss-polynomials}(i).}	
\end{align*}
Hence, inserting \(t_{a}(\boldsymbol{\omega})\) for \(s_{1}^{\Lambda'}(a\omega_{1})\) yields
\begin{equation}\label{Eq.Ek-omega-as-Ek-omega-prime-minus-Goss-polynomials}
	E_{k}(\boldsymbol{\omega}) = E_{k}(\boldsymbol{\omega}') - \sum_{a \in A \text{ monic}} G_{k, \Lambda'}(t_{a}(\boldsymbol{\omega})).
\end{equation}
Looking at \eqref{Eq.tn-omega-as-power-series-in-t}, this is a formal expansion in \(t\) as wanted. If is easily seen that its convergence radius
in \(\lvert t \rvert\), depending on \(\boldsymbol{\omega}'\), is always strictly positive; it will be specified and made locally uniform
later (see Section \ref{Section.Growth-coefficients}). The constant part with respect to \(t\) is \(E_{k}(\boldsymbol{\omega}')\), the
\enquote{same} Eisenstein series on \(\Omega' = \Omega^{r-1}\).

For the discriminant we have the following formula, which for \(r=2\) (upon suitable normalization involving the discriminant of \(\phi_{T}'\), 
itself an analogue of \((2\pi \imath)^{2}\)) is analogous with Jacobi's product formula \(q \prod_{n \geq 1} (1-q^{n})^{24}\) for the
elliptic discriminant.

\begin{Theorem}[{\cite{Basson17} Corollary 11, \cite{Gekeler25} Theorem 10.13 and (10.17.3)}]\label{Theorem.Product-expression-discriminant-Delta-omega}
	The discriminant \(\Delta(\boldsymbol{\omega})\) may be expanded as a product
	\begin{equation}\label{Eq.Product-expression-discriminant-Delta-omega}
		\Delta(\boldsymbol{\omega}) = {-}( \Delta'(\boldsymbol{\omega}') )^{q} t(\boldsymbol{\omega})^{q-1} \prod_{a \in A \text{ monic}} S_{a}(t(\boldsymbol{\omega}))^{(q^{r}-1)(q-1)}.
	\end{equation}	
\end{Theorem}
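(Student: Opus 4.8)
The plan is to derive the product formula for $\Delta(\boldsymbol{\omega})$ from the already-established product expansion of the exponential function $e^{\Lambda_{\boldsymbol{\omega}}}$, by exploiting the splitting $\boldsymbol{\omega} = (\omega_1, \boldsymbol{\omega}')$ and the relationship between the rank-$r$ lattice $\Lambda = \Lambda_{\boldsymbol{\omega}}$ and the rank-$(r-1)$ sublattice $\Lambda' = \Lambda_{\boldsymbol{\omega}'}$. The key structural observation is that $\Lambda$ decomposes as a disjoint union of cosets $\Lambda = \bigcupdot_{a \in A}(a\omega_1 + \Lambda')$, which lets one factor the defining product for $e^{\Lambda}$ through the intermediate data of $\Lambda'$ and the quantities $t_a(\boldsymbol{\omega}) = (e^{\Lambda'}(a\omega_1))^{-1}$ introduced in \ref{Subsection.n-th-variant-tn-of-uniformizer-of-t}.

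\emph{First} I would recall that $\Delta(\boldsymbol{\omega})$ is the leading coefficient $g_r$ of $\phi_T^{\Lambda}$, and that (by the standard rank-$r$ Weierstraß theory) $\Delta$ equals a suitable limit/product built from the lattice points of $\Lambda$. More precisely, writing $e^{\Lambda}(z) = z\sideset{}{^\prime}\prod_{\lambda}(1 - z/\lambda)$ and comparing top-degree coefficients in the functional equation $e^{\Lambda}(Tz) = \phi_T^{\Lambda}(e^{\Lambda}(z))$, the discriminant is expressible through a product over $\Lambda \smallsetminus \{0\}$. \emph{Second}, I would split that product according to the coset decomposition above: the $a=0$ contribution reassembles into a power of $\Delta'(\boldsymbol{\omega}')$ (this is where the factor $(\Delta')^q$ should emerge, the exponent $q$ reflecting the degree shift $r-1 \mapsto r$), while the $a \neq 0$ contributions, after pairing $a$ with its scalar multiples $c \cdot a$ for $c \in \mathds{F}^*$ and reducing to monic $a$, should produce the factor $t^{q-1}$ from the leading coset $a$-term and the product $\prod_{a\ \mathrm{monic}} S_a(t)^{(q^r-1)(q-1)}$ from the higher coset data.

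\emph{Third}, the appearance of $S_a(t)$ is explained by the formula from \ref{Subsection.n-th-variant-tn-of-uniformizer-of-t}, namely $t_a = (\Delta_a')^{-1} t^{q^{(r-1)d}}/S_a(t)$, which repackages the coset contribution $e^{\Lambda'}(a\omega_1)$ precisely in terms of $S_a(t)$ and $t$. Substituting this into the factored product and carefully tracking the exponents — the weight of $\Delta$ is $q^r - 1$, and the exponent $(q^r-1)(q-1)$ in the statement is dictated by homogeneity — should assemble the claimed identity. The normalization sign $-1$ and the scalar factor $-1 = \sum_{c \in \mathds{F}^*} c^{-k}$ will enter exactly as in the Eisenstein-series computation of \ref{Subsection.t-expansions-of-Eisenstein-series}.

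\emph{The main obstacle} I anticipate is the precise bookkeeping of exponents and the isolation of the ``leading'' $t^{q-1}$ factor. One must show that, after extracting $S_a(t)$ from each coset, the residual leading terms across all monic $a$ telescope or multiply to exactly $t^{q-1}$ (and not some other power), and that the exponent on $S_a(t)$ comes out to the asserted $(q^r-1)(q-1)$ rather than a naive count. Since the statement cites \cite{Basson17} and \cite{Gekeler25} for the full proof, I expect the cleanest route is to verify the identity degree-by-degree as formal $t$-power series: match the order of vanishing in $t$ on both sides (governed by Proposition \ref{Proposition.Characterization-Goss-polynomials}(ix) on divisibility of Goss polynomials), confirm the constant coefficient $-({\Delta'})^q$, and then invoke that a weak modular form with a given $t$-expansion is determined by finitely much data together with its transformation behavior. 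Convergence on the relevant domain is deferred to Section \ref{Section.Growth-coefficients}, so at this stage the identity may be treated as an equality of formal $t$-expansions with coefficients that are weak modular forms in $\boldsymbol{\omega}'$.
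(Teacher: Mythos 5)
First, be aware that the paper itself contains no proof of this theorem: it is quoted from \cite{Basson17} Corollary 11 and \cite{Gekeler25} Theorem 10.13, so your attempt can only be measured against those proofs. Your main plan --- decompose \(\Lambda_{\boldsymbol{\omega}} = \bigcupdot_{a \in A}\,(a\omega_{1} + \Lambda_{\boldsymbol{\omega}'})\), so that \(e^{\Lambda_{\boldsymbol{\omega}}} = e_{\Gamma}\circ e^{\Lambda_{\boldsymbol{\omega}'}}\) with \(\Gamma = \{\phi'_{a}(t^{-1}) \mid a \in A\}\), i.e.\ realize \(\phi^{\boldsymbol{\omega}}\) as the quotient of \(\phi^{\boldsymbol{\omega}'}\) by the rank-one \(\phi'\)-lattice generated by \(t^{-1}\), then repackage the coset contributions through \(t_{a}\) and \(S_{a}\) --- is indeed the skeleton of those proofs. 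But what you wrote is a plan, not a proof. The whole content of the theorem is exactly the computation you defer as ``bookkeeping'': that the products over torsion points of the quotient module assemble into \({-}(\Delta')^{q}t^{q-1}\prod_{a}S_{a}(t)^{(q^{r}-1)(q-1)}\) with precisely these exponents. (Also, ``comparing top-degree coefficients in \(e^{\Lambda}(Tz) = \phi^{\Lambda}_{T}(e^{\Lambda}(z))\)'' is not meaningful as stated, since \(e^{\Lambda}\) is an infinite series; \(\Delta\) has to be extracted via the torsion points, \(\Delta = {\pm}\, T/\prod_{0 \neq x \in \phi^{\Lambda}[T]}x\), or some equivalent device.)

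Two of your supporting claims are, moreover, wrong, and the second is where the proposed completion genuinely fails. (a) The exponent \((q^{r}-1)(q-1)\) is \emph{not} ``dictated by homogeneity'': under \(\boldsymbol{\omega}\mapsto c\boldsymbol{\omega}\) the division points scale as \(d_{\mathbf{u}}\mapsto c\,d_{\mathbf{u}}\) while \(t \mapsto c^{-1}t\), so \(S_{a}(t(\boldsymbol{\omega}))\) is scaling-invariant of weight \(0\). Homogeneity therefore pins down only the prefactor \((\Delta')^{q}t^{q-1}\) (weight \(q(q^{r-1}-1)+(q-1)=q^{r}-1\)) and places no constraint at all on the exponent of \(S_{a}\); that exponent can only come out of the actual torsion count. (b) Your fallback --- match the order of vanishing and the bottom coefficient, then invoke that a form ``is determined by finitely much data together with its transformation behavior'' --- does not close the argument. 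The rigidity statement you would need is \(\dim S_{q^{r}-1,0}=1\) (which is true, \(S_{q^{r}-1,0}=C_{\infty}\cdot\Delta\); cf.\ the discussion preceding Proposition \ref{Proposition.Eigenforms-of-T-p-i}), but to use it you must first know that the product side is itself a cusp form of weight \(q^{r}-1\) and type \(0\) for the full group \(\Gamma = \GL(r,A)\). Its building blocks \(t\), \(S_{a}\), \(\Delta'\) are equivariant only under the subgroup stabilizing the splitting \((\omega_{1},\boldsymbol{\omega}')\); equivariance of the infinite product under all of \(\Gamma\), together with holomorphy and boundedness, is precisely what is unknown a priori, and establishing it is not easier than the theorem itself --- in the cited sources the modularity of the product expansion is a \emph{consequence} of the identity, not an input. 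Since two \(t\)-expansions that merely agree in their lowest-order terms need not agree, the proposal as it stands has a genuine gap.
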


Accordingly, the function \(h\) normalized as in \eqref{Eq.Natural-normalization-of-h} has expansion
\begin{equation}
	h(\boldsymbol{\omega}) = (h'(\boldsymbol{\omega}'))^{q} t(\boldsymbol{\omega}) \prod_{a \text{ monic}} S_{a}(t(\boldsymbol{\omega}))^{q^{r}-1}.
\end{equation}
Here \(\Delta'\) and \(h'\) are the rank-\((r-1)\) versions of \(\Delta\) and \(h\) on \(\Omega' = \Omega^{r-1}\). Again, the convergence radius,
which depends on \(\boldsymbol{\omega}'\), is always positive. This question will be dealt with in Section \ref{Section.Growth-coefficients}.

\section{Hecke operators}\label{Section.Hecke-operators}

\subsection{Hecke correspondences} We let \(\mathcal{L} = \mathcal{L}^{r}\) be the set of \(A\)-lattices of rank \(r\) in \(C_{\infty}\). Fix a
prime \(\mathfrak{p} = (\pi)\) of \(A\) of degree \(d\) with monic generator \(\pi\), and write \(P = q^{d}\). For \(0 \leq i \leq r\) we
consider the correspondence \(T_{\mathfrak{p},i}\) of \(\mathcal{L}\) which associates with each \(\Lambda \in \mathcal{L}\) the finite 
collection
\begin{equation}\label{Eq.Association-lattice-finite-collection-T-p-i}
	T_{\mathfrak{p},i}(\Lambda) = \{ \widetilde{\Lambda} \in \mathcal{L} \mid \Lambda \subset \widetilde{\Lambda} \text{ and } \widetilde{\Lambda}/\Lambda \cong \mathds{F}_{\mathfrak{p}}^{i} \}.
\end{equation}
Trivial properties are
\begin{align*}
	T_{\mathfrak{p},0}(\Lambda)					&= \{ \Lambda \}\\
	T_{\mathfrak{p},r}(\Lambda)					&= \{ \mathfrak{p}^{-1}\Lambda \} \\
	\lvert T_{\mathfrak{p},i}(\Lambda) \rvert 	&= \lvert \Gr_{r,i}(\mathds{F}_{\mathfrak{p}}) \rvert \eqdef c_{r,i}(\mathfrak{p})	&&(0 \leq i \leq r),
\end{align*}
where \(\Gr_{r,i}(\mathds{F}_{\mathfrak{p}})\) is the Graßmannian of \(i\)-subspaces of \(\mathds{F}_{\mathfrak{p}}^{r}\). The size of the latter
is given by \cite{Shimura71} Proposition 3.18 (insert \(P\) for \(p\)):
\begin{equation}
	c_{r,i}(\mathfrak{p}) = \frac{(P^{r}-1)(P^{r}-P) \cdots (P^{r} - P^{i-1})}{(P^{i}-1)(P^{i}-P) \cdots (P^{i} - P^{i-1})}.
\end{equation}
Note that 
\begin{align}
	c_{r,i}(\mathfrak{p})	&\equiv 1 \pmod{p} \quad \text{and} \\
	c_{r,i}(\mathfrak{p})	&= c_{r-1,i}(\mathfrak{p}) + P^{r-i}c_{r-1,i-1}(\mathfrak{p}) \quad \text{for}\quad 1 \leq i < r.\label{Eq.Recursion-c-r-i}	
\end{align}
We regard the \(T_{\mathfrak{p},i}\) as endomorphisms on the free abelian group \(\mathds{Z}[\mathcal{L}]\) of divisors on \(\mathcal{L}\). Then
the \(T_{\mathfrak{p},i}\) (\(\mathfrak{p}\) fixed) commute, \(T_{\mathfrak{p},0} = \id\), and the \(T_{\mathfrak{p},i}\) (\(1 \leq i \leq r\))
are algebraically independent. This follows as in \cite{Shimura71} Theorem 3.20. We let
\[
	\mathbf{H}_{\mathfrak{p}} = \mathds{Z}[T_{\mathfrak{p},i} \mid 1 \leq i \leq r]
\]
be the \textbf{local Hecke algebra at} \(\mathfrak{p}\). By the above, it is a polynomial ring over \(\mathds{Z}\) in the \(T_{\mathfrak{p},i}\).
For different primes \(\mathfrak{p}\), \(\mathfrak{q}\), the \(T_{\mathfrak{p},i}\) and the \(T_{\mathfrak{q},j}\) commute; furthermore, all the
\(T_{\mathfrak{p},i}\) (\(\mathfrak{p}\) a prime, \(1 \leq i \leq r\)) are algebraically independent. Therefore, the global Hecke algebra 
\(\mathbf{H} = \mathds{Z}[T_{\mathfrak{p},i} \mid \mathfrak{p} \text{ prime}, 1 \leq i \leq r]\) is a polynomial ring in infinitely many variables.
It will however play no role in our considerations.

\subsection{Oriented lattices} An \textbf{orientation} on the \(r\)-lattice \(\Lambda\) is the choice of an ordered \(A\)-basis 
\(B = \{\lambda_{1}, \dots, \lambda_{r}\}\) of \(\Lambda\) up to the action of \(\SL(r,A)\). The set \(O(\Lambda)\) of orientations
\([B]\) on \(\Lambda\) is a torsor under \(\GL(r,A)/\SL(r,A) \cong \mathds{F}^{*}\); hence there are precisely \(q-1\) orientations on \(\Lambda\).
We put \(\mathcal{L}^{\pm} = \mathcal{L}^{r,\pm}\) for the set of oriented \(r\)-lattices \((\Lambda, [B])\). The multiplicative group 
\(C_{\infty}^{*}\) acts on \(\mathcal{L}\) and \(\mathcal{L}^{\pm}\), and the diagram
\begin{equation}
	\begin{tikzcd}[row sep=small]
		\SL(r,A) \backslash \Omega \ar[r, "\cong"]					& \mathcal{L}^{\pm}/C_{\infty}^{*} \\
		\text{class of } \boldsymbol{\omega} \ar[r, mapsto]	\ar[d]	& \text{class of \(\Lambda_{\boldsymbol{\omega}}\) with basis \(\{\omega_{1}, \dots, \omega_{r}\}\)} \ar[d] \\
		\Gamma \backslash \Omega \ar[r, "\cong"]						& \mathcal{L}/C_{\infty}^{*}
	\end{tikzcd}
\end{equation}
with natural maps is commutative.

\begin{Lemma}
	Let \(\Lambda, \widetilde{\Lambda} \in \mathcal{L}\) be commensurable (i.e., both finite over \(\Lambda \cap \widetilde{\Lambda}\)). The
	sets \(O(\Lambda)\) and \(O(\widetilde{\Lambda})\) of orientations correspond canonically to each other.	
\end{Lemma}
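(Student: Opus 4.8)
The claim is that for commensurable lattices $\Lambda, \widetilde{\Lambda} \in \mathcal{L}$, there is a canonical bijection between the orientation sets $O(\Lambda)$ and $O(\widetilde{\Lambda})$. Recall from the definition that $O(\Lambda)$ is a torsor under $\GL(r,A)/\SL(r,A) \cong \mathds{F}^{*}$, so each orientation set has exactly $q-1$ elements; a canonical correspondence is thus the same as a canonical $\mathds{F}^*$-equivariant map. The plan is to realize orientations in terms of the $A$-module structure on the top exterior power $\bigwedge^{r} V$, where $V = K \otimes_{A} \Lambda = K \otimes_{A} \widetilde{\Lambda}$ is the common ambient $K$-vector space (they share one because commensurable lattices span the same $K$-subspace of $C_\infty$). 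An orientation $[B]$ determined by an ordered basis $\lambda_1, \dots, \lambda_r$ is captured by the nonzero element $\lambda_1 \wedge \cdots \wedge \lambda_r \in \bigwedge^{r} V$, and two ordered bases give the same orientation (i.e.\ differ by $\SL(r,A)$) precisely when they induce the same wedge, since $\det$ realizes the isomorphism $\GL(r,A)/\SL(r,A) \cong \mathds{F}^*$.

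\textbf{Key steps.} First I would identify $O(\Lambda)$ with the set of $\mathds{F}^*$-orbits of generators of the rank-one $A$-module $\det \Lambda := \bigwedge^{r}_{A} \Lambda \subset \bigwedge^{r} V$: each ordered $A$-basis wedges to a generator of $\det \Lambda$, and the $\SL(r,A)$-equivalence class corresponds exactly to an $\mathds{F}^*$-orbit of generators, because a base change by $\gamma \in \GL(r,A)$ scales the wedge by $\det \gamma \in \mathds{F}^*$. The same description applies to $\widetilde{\Lambda}$ with the rank-one module $\det \widetilde{\Lambda} \subset \bigwedge^{r} V$. Next, commensurability gives $\Lambda \cap \widetilde{\Lambda}$ of finite index in both, hence $\det(\Lambda \cap \widetilde{\Lambda})$, $\det \Lambda$, $\det \widetilde{\Lambda}$ are three $A$-lattices of rank one in the line $\bigwedge^{r} V$ over $K$; any two of them differ by a nonzero scalar in $K^{*}$. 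The crucial point is that the $\mathds{F}^*$-orbit structure on generators is preserved: if $\det \widetilde{\Lambda} = c \cdot \det \Lambda$ for some $c \in K^*$, then multiplication by $c$ carries generators to generators and is $\mathds{F}^*$-equivariant, yielding the desired bijection $O(\Lambda) \xrightarrow{\sim} O(\widetilde{\Lambda})$.

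\textbf{Canonicity and the main obstacle.} The scalar $c$ above is only well-defined up to $\mathds{F}^*$ (since $\det \Lambda$ and $\det \widetilde{\Lambda}$ are each only monic-generated up to that ambiguity), so I must check that the induced orbit-level map does \emph{not} depend on this choice — but that is automatic, since rescaling $c$ by an element of $\mathds{F}^*$ permutes within each $\mathds{F}^*$-orbit. The genuinely delicate point, and the one I expect to be the main obstacle, is pinning down what \emph{canonical} should mean and verifying the map is truly natural: one wants the correspondence to be compatible with composition (for a third commensurable lattice) and with the $C_\infty^*$-action appearing in the diagram above, so that it descends correctly to the oriented quotient $\mathcal{L}^{\pm}/C_\infty^*$. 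I would establish this by routing everything through the single common line $\bigwedge^{r} V$: since all three determinant modules live there and the identifications are all multiplication-by-scalars in $K^*$, functoriality and transitivity follow from associativity of scalar multiplication. A cleaner packaging, which I would adopt if the wedge bookkeeping becomes cumbersome, is to note that an orientation is equivalently a trivialization of $\det \Lambda / \mathfrak{m}$-type data, i.e.\ a nonzero element of the one-dimensional $\mathds{F}$-space $(\det \Lambda) \otimes_A \mathds{F}$ up to scaling is the wrong count, so instead one observes $(\det \Lambda) \otimes_A (\det \Lambda)^{-1}$ is canonically $A$ and the comparison isomorphism $\det \Lambda \otimes (\det \widetilde{\Lambda})^{-1} \cong K$ is canonical once we fix the embedding into $\bigwedge^r V$; transporting generators along it gives the asserted canonical identification.
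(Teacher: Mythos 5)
Your route through top exterior powers is genuinely different from the paper's (which reduces to prime-index inclusions via the elementary divisor theorem and then composes along chains), and it could be made to work, but as written it contains a real error that defeats exactly the canonicity the lemma asserts. The slip is in your first identification: since \(A^{*} = \mathds{F}^{*}\), the generators of the rank-one \(A\)-module \(\det\Lambda = \bigwedge^{r}_{A}\Lambda\) form a \emph{single} \(\mathds{F}^{*}\)-orbit, so \enquote{the set of \(\mathds{F}^{*}\)-orbits of generators} is a one-point set and cannot be \(O(\Lambda)\). What is true is that \([B] \mapsto \lambda_{1}\wedge\cdots\wedge\lambda_{r}\) identifies \(O(\Lambda)\) with the set of generators themselves, an \(\mathds{F}^{*}\)-torsor with \(q-1\) elements. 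This matters because your comparison map is multiplication by a scalar \(c \in K^{*}\) with \(\det\widetilde{\Lambda} = c\cdot\det\Lambda\), and \(c\) is determined only up to \(\mathds{F}^{*}\): replacing \(c\) by \(\zeta c\) with \(1 \neq \zeta \in \mathds{F}^{*}\) composes your bijection with a nontrivial translation of the torsor, hence gives a genuinely different bijection \(O(\Lambda) \to O(\widetilde{\Lambda})\). Your claim that this ambiguity is harmless because it \enquote{permutes within each \(\mathds{F}^{*}\)-orbit} is precisely where the faulty identification enters: that statement is only true for the collapsed one-point quotient, not for the orientation sets. As submitted, the construction produces \(q-1\) equally good bijections and singles out none, so it does not prove the lemma; the final paragraph's appeal to a \enquote{canonical} isomorphism \(\det\Lambda \otimes (\det\widetilde{\Lambda})^{-1} \cong K\) has the same problem, since that trivialization is itself only canonical up to \(\mathds{F}^{*}\).

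The gap is fixable, and the fix shows what your approach would buy. Every element of \(K^{*} = \mathds{F}(T)^{*}\) is uniquely an element of \(\mathds{F}^{*}\) times a quotient of monic polynomials; take for \(c\) the unique representative that is such a quotient of monics (equivalently, the monic generator of the fractional ideal \(\{x \in K \mid x\cdot\det\Lambda \subseteq \det\widetilde{\Lambda}\}\)). With this normalization the bijection is canonical, transitivity for a third commensurable lattice is automatic because a product of monic-normalized scalars is again monic-normalized, and one recovers exactly the paper's correspondence: in the paper's prime-index step the basis \(\{\pi^{-1}\lambda_{1}, \lambda_{2}, \dots, \lambda_{r}\}\) of \(\widetilde{\Lambda}\) has wedge \(\pi^{-1}(\lambda_{1}\wedge\cdots\wedge\lambda_{r})\) with \(\pi\) monic, i.e., the paper's map is multiplication by the monic-normalized scalar on determinants. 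Done this way, your argument would even be cleaner than the paper's, since the paper must separately verify well-definedness (via \(\SL(r,A)\cap\Gamma_{0}(\mathfrak{p})\)) and independence of the chosen chain of prime-index steps, both of which become automatic. But the monic normalization is the essential idea, and since you never identify it—indeed you explicitly argue the ambiguity in \(c\) is immaterial—the proof has a genuine gap.
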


\begin{proof}
	Assume that \(\Lambda \subset \widetilde{\Lambda}\) is such that the index is prime, i.e., 
	\(\widetilde{\Lambda}/\Lambda \cong \mathds{F}_{\mathfrak{p}}\) for some prime \(\mathfrak{p}\) of \(A\). Choose, by the elementary 
	divisor theorem, an ordered basis \(B = \{\lambda_{1}, \dots, \lambda_{r}\}\) of \(\Lambda\) such that 
	\begin{equation}\label{Eq.Basis-B-tilde-of-lattice-Lambda-tilde}
		\widetilde{B} = \{ \pi^{-1}\lambda_{1}, \lambda_{2}, \dots, \lambda_{r}\} \text{ is a basis of } \widetilde{\Lambda},
	\end{equation}	
	where \(\pi \in \mathfrak{p}\) is the monic generator.
	
	Any other ordered basis \(B'\) of \(\Lambda\) with \([B] = [B']\) and such that \eqref{Eq.Basis-B-tilde-of-lattice-Lambda-tilde} holds with
	\(B'\) and \( (\widetilde{B'}) \) is obtained from \(B\) by base change with some \(\gamma \in \SL(r,A) \cap \Gamma_{0}(\mathfrak{p})\), where
	\(\Gamma_{0}(\mathfrak{p}) \subset \Gamma\) is the subgroup of elements congruent to 
	\begin{center}
		\begin{tikzpicture}[scale=0.75]
			\draw (0,0) rectangle (3,3);
			\draw (1,0) -- (1,3);
			\draw (0,2) -- (3,2);
		
			\node (s1) at (0.5,2.5) {\(*\)};
			\node (s2) at (2,2.5) {\(*\)};
			\node (s3) at (2,1) {\(*\)};
			\node (dots) at (0.5,1) {\(\begin{matrix} 0 \\ \vdots \\ 0 \end{matrix}\)};
			\node (pmod) at (4.5,1.5) {\(\pmod{\mathfrak{p}}\),};
		\end{tikzpicture} 
	\end{center} 
	and then \([\widetilde{B}] = [\widetilde{B'}]\). Hence \([B] \mapsto [\widetilde{B}]\) is a well-defined and bijective map from \(O(\Lambda)\) 
	to \(O(\widetilde{\Lambda})\). Composing, we thus get bijections \(O(\Lambda) \overset{\cong}{\to} O(\widetilde{\Lambda})\) for any pair 
	\(\Lambda \subset \widetilde{\Lambda}\) in \(\mathcal{L}\), which are easily verified to be independent of the chain 
	\(\Lambda \hookrightarrow \Lambda_{1} \hookrightarrow \cdots \hookrightarrow \Lambda_{n} = \widetilde{\Lambda}\) with successively prime 
	steps used for its definition. Thus the assertion follows.
\end{proof}

\subsection{} By the lemma, the correspondence \(T_{\mathfrak{p},i}\) canonically extends to \(\mathcal{L}^{\pm}\); that is, we get a commutative 
diagram
\begin{equation}
	\begin{tikzcd}
		\mathds{Z}[\mathcal{L}^{\pm}] \ar[r, "T_{\mathfrak{p},i}"] \ar[d]	& \mathds{Z}[\mathcal{L}^{\pm}] \ar[d] \\
		\mathds{Z}[\mathcal{L}] \ar[r, "T_{\mathfrak{p},i}"]					& \mathds{Z}[\mathcal{L}].
	\end{tikzcd}
\end{equation}
The upper \(T_{\mathfrak{p},i}\) associates to each \((\Lambda, [B])\in \mathcal{L}^{\pm}\) the collection \((\widetilde{\Lambda}, [\widetilde{B}])\)
with \(\widetilde{\Lambda} \in T_{\mathfrak{p},i}(\Lambda)\), and \([\widetilde{B}]\) is the orientation on \(\widetilde{\Lambda}\) induced by \([B]\).

\subsection{}\label{Subsection.Correspondence-weak-modular-forms-and-functions-on-lattices}%
We will use the following correspondence, well-known in the \enquote{classical} case, between weak modular forms and functions on 
\(\mathcal{L}\) or \(\mathcal{L}^{\pm}\).

Let \(k \in \mathds{N}_{0}\) and \(\ell \in \mathds{Z}/(q-1)\) be such that \(k = r\ell \pmod{q-1}\). There are canonical 1-1-correspondences
between the following sets of \(C_{\infty}\)-valued functions: 
\begin{enumerate}[label=(\alph*)]
	\item functions \(F\) of weight \(k\) and type \(\ell\) on \(\mathcal{L}^{\pm}\); that is, \(F\) satisfies
	\begin{align*}
		F(c\Lambda, [cB])		&= c^{{-}k} F(\Lambda, [B]) \\
		F(\Lambda, [\gamma B])	&= (\det \gamma)^{{-}\ell} F(\Lambda, [B]) \quad\text{for } c \in C_{\infty}^{*}, \gamma \in \Gamma;	
	\end{align*}
	\item functions \(f\) on \(\Omega\) of weight \(k\) and type \(\ell\);
	\item functions \(\widetilde{f}\) on \(\Psi\) with \(\widetilde{f}(c\boldsymbol{\omega}) = c^{{-}k} \widetilde{f}(\boldsymbol{\omega})\) and
	\(\widetilde{f}(\gamma \boldsymbol{\omega}) = (\det \gamma)^{{-}\ell} \widetilde{f}(\boldsymbol{\omega})\).
\end{enumerate}

If \(\ell = 0\) then the function \(F\) in (a) descends to a function on \(\mathcal{L}\) with \(F(c\Lambda) = c^{{-}k}F(\Lambda)\). The translation
is as follows.
\begin{description}
	\item[\((F \leftrightarrow f)\)] \(f(\boldsymbol{\omega}) \defeq F(\Lambda_{\boldsymbol{\omega}}, [\boldsymbol{\omega}])\), where 
	\([\boldsymbol{\omega}]\) is the orientation defined by the basis \(\{\omega_{1}, \dots, \omega_{r}\}\);
	
	\(F(\Lambda, [B]) \defeq \omega_{1}^{{-}k}f(\omega_{1}^{{-}1}\boldsymbol{\omega})\) for \(\Lambda = \sum A\omega_{i}\) with basis
	\(\{\omega_{1}, \dots, \omega_{r}\}\).
	\item[\((f \leftrightarrow \widetilde{f})\)] 	\(\widetilde{f}(\boldsymbol{\omega}) \defeq \omega_{1}^{{-}k}f(\omega_{1}^{{-}1}\boldsymbol{\omega})\)
	
	\(f \defeq\) restriction of \(\widetilde{f}\) to \(\Omega \hookrightarrow \Psi\).
\end{description}

We often do not distinguish these interpretations and write e.g. \enquote{\(\Lambda\)} or \enquote{\((\Lambda, [B])\)} as the argument of a
modular form.

\subsection{} Now we transport the Hecke operators \(T_{\mathfrak{p},i}\) from functions on (oriented) lattices to modular forms by means of
\ref{Subsection.Correspondence-weak-modular-forms-and-functions-on-lattices}. It is obvious that \(T_{\mathfrak{p},i}\)
\begin{itemize}
	\item preserves holomorphy of functions;
	\item maps weak modular forms of type \((k, \ell)\) to such;
	\item preserves the boundary condition \eqref{Eq.Characterization-Drinfeld-modular-form-(ii)};
\end{itemize}
hence it is an operator on the space \(M_{k,\ell}\). Furthermore, the condition that \(f \in M_{k,\ell}\) is a cusp form (i.e., its \(t\)-expansion
is divisible by \(t\)) means that \(f\) vanishes at the boundary of \(\mathcal{M}^{r}(C_{\infty})\) in 
\(\overline{\mathcal{M}}^{r}(C_{\infty}) = \mathds{P}(C_{\infty})\) (see \ref{Subsection.The-moduli-scheme}).

As this condition is also preserved by \(T_{\mathfrak{p},i}\), it maps \(S_{k,\ell}\) to itself.

\begin{Example}
	Suppose that \(r = 2\), and write \(\boldsymbol{\omega} = (\omega, 1)\). For a given \(\mathfrak{p}\) of degree \(d\), the Hecke operators
	are \(T_{\mathfrak{p}} \defeq T_{\mathfrak{p},1}\) and \(T_{\mathfrak{p},2}\). The action of \(T_{\mathfrak{p},2}\) on \(M_{k, \ell}\) is
	\begin{equation}
		T_{\mathfrak{p},2} f(\boldsymbol{\omega}) = f(\pi^{-1}\boldsymbol{\omega}) = \pi^{k} f(\boldsymbol{\omega}),
	\end{equation}	
	and is uninteresting. The action of \(T_{\mathfrak{p}}\) may be described by
	\begin{equation}\label{Eq.Action-of-T-p}
		T_{\mathfrak{p}}f(\omega) = \pi^{k}f(\pi \omega) + \sum_{\substack{b \in A \\ \deg b < d}} f\left( \frac{\omega + b}{\pi} \right),
	\end{equation}
	as the \(q^{d}+1\) sets \(\{ \omega, \pi^{{-}1}\}\), \(\{ \frac{\omega + b}{\pi}, 1\}\) (\(b\) as above) are bases for the \(q^{d} +1\)
	lattices \(\widetilde{\Lambda}\) with \(\widetilde{\Lambda}/\Lambda_{(\omega, 1)} \cong \mathds{F}_{\mathfrak{p}}\).
\end{Example}

\begin{Remarks}
	\begin{enumerate}[wide, label=(\roman*)]
		\item It becomes much more complicated and unpleasant to write (and work with) formulas similar to \eqref{Eq.Action-of-T-p} in the case
		\(r > 2\), since representatives of certain double cosets must be chosen, see \cite{BassonBreuerPink24} Section 12 for more details. Instead we will throughout
		work with functions on lattices, which in our case is simpler and easier to handle.
		\item Why did we define the Hecke correspondence in \eqref{Eq.Association-lattice-finite-collection-T-p-i} via super-lattices 
		\(\widetilde{\Lambda} \supset \Lambda\) and not through sub-lattices \(\Lambda^{\#} \subset \Lambda\)? As long as the base ring \(A\) is
		a principal ideal domain (as in our case), this is merely a question of normalization, since the \(\Lambda^{\#}\) correspond to the
		\(\widetilde{\Lambda}\) through \(\Lambda^{\#} = \pi \widetilde{\Lambda}\). We chose our definition 
		\eqref{Eq.Association-lattice-finite-collection-T-p-i} since it leads to \(A\)-integral formulas. For example, \eqref{Eq.Action-of-T-p} becomes
		\begin{equation}
			T_{\mathfrak{p}}f(\omega) = f(\pi \omega) + \pi^{{-}k} \sum_{b} f\left( \frac{\omega + b}{\pi} \right) \tag{\(3.7.2^{\#}\)}
		\end{equation}
		once the definition of \(T_{\mathfrak{p},i}\) is based on sub-lattices \(\Lambda^{\#}\). In the classical case of elliptic modular forms,
		a formula corresponding to 
		\begin{equation}
			T_{\mathfrak{p}}f(\omega) = \pi^{k-1} f(\pi \omega) + \pi^{{-}1} \sum_{b} f\left( \frac{\omega + b}{\pi} \right) \tag{3.7.2'}
		\end{equation}
		is used, as then Hecke eigenvalues and certain Fourier coefficients become equal for normalized newforms \(f\). This is however meaningless
		in our framework.
	\end{enumerate}	
\end{Remarks}

\begin{Example}\label{Example.Action-of-T-p-i-on-Eisenstein-series-Ek}%
	We calculate the action of \(T_{\mathfrak{p},i}\) on the Eisenstein series \(E_{k}\). Now 
	\begin{align}
		(T_{\mathfrak{p},i} E_{k})(\boldsymbol{\omega})	&= (T_{\mathfrak{p},i}E_{k})(\Lambda_{\boldsymbol{\omega}}) \nonumber \\
														&= \sum_{\widetilde{\Lambda} \supset \Lambda, \widetilde{\Lambda}/\Lambda \cong \mathds{F}_{\mathfrak{p}}^{i}} E_{k}(\widetilde{\Lambda}) \nonumber \\
														&= \sideset{}{^{\prime}} \sum_{\lambda \in \mathfrak{p}^{{-}1}\Lambda_{\boldsymbol{\omega}}} \nu(\lambda) \lambda^{{-}k}	\nonumber 
		\intertext{with \(\nu(\lambda) = \lvert \{ \widetilde{\Lambda} \mid \lambda \in \widetilde{\Lambda} \} \rvert \)}
														&= c_{r,i}(\mathfrak{p}), \text{ if } \lambda \in \Lambda_{\boldsymbol{\omega}} \\
														&= c_{r-1,i-1}(\mathfrak{p}), \text{ if } \lambda \notin \Lambda_{\boldsymbol{\omega}}, \nonumber
	\end{align}
	as in the latter case, the class of \(\lambda\) in \(\mathfrak{p}^{{-}1}\Lambda_{\boldsymbol{\omega}}/\Lambda_{\boldsymbol{\omega}}\) is
	contained in precisely \(c_{r-1,i-1}(\mathfrak{p})\) hyperplanes of dimension \(i\) of the \(\mathds{F}_{\mathfrak{p}}\)-space
	\(\mathfrak{p}^{{-}1}\Lambda_{\boldsymbol{\omega}}/\Lambda_{\boldsymbol{\omega}}\). In either case, \(\nu(\lambda) \equiv 1 \pmod{p}\), and
	thus
	\begin{equation}
		(T_{\mathfrak{p},i} E_{k})(\boldsymbol{\omega}) = E_{k}(\pi^{{-}1} \Lambda_{\boldsymbol{\omega}}) = \pi^{k} E_{k}(\boldsymbol{\omega}).
	\end{equation}
	Hence \(E_{k}\) is an eigenform with eigenvalue \(\pi^{k}\) of \(T_{\mathfrak{p},i}\), regardless of \(i\).
\end{Example}

Hecke operators behave simply under \(p\)-th powers, \(p = \Characteristic(\mathds{F})\).

\begin{Proposition}\label{Proposition.Identity-for-C-infty-valued-functions-on-L-or-Lpm}%
	For any \(C_{\infty}\)-valued function \(f\) on \(\mathcal{L}\) or \(\mathcal{L}^{\pm}\),
	\[
		T_{\mathfrak{p},i}(f^{p}) = (T_{\mathfrak{p},i} f)^{p}
	\]
	holds.
\end{Proposition}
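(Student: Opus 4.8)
The plan is to unwind the definition of the Hecke action as a finite sum over super-lattices and then invoke the additivity of the Frobenius endomorphism, which is available because $C_{\infty}$ has characteristic $p$.

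Recall from \ref{Subsection.Correspondence-weak-modular-forms-and-functions-on-lattices} and the computation in Example \ref{Example.Action-of-T-p-i-on-Eisenstein-series-Ek} that, as an operator on $C_{\infty}$-valued functions, $T_{\mathfrak{p},i}$ acts by
\[
	(T_{\mathfrak{p},i} f)(\Lambda) = \sum_{\widetilde{\Lambda} \in T_{\mathfrak{p},i}(\Lambda)} f(\widetilde{\Lambda}),
\]
where the sum runs over the finite collection \eqref{Eq.Association-lattice-finite-collection-T-p-i} of super-lattices, each occurring exactly once (resp.\ over their oriented refinements $(\widetilde{\Lambda}, [\widetilde{B}])$ in the case of $\mathcal{L}^{\pm}$). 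First I would write out the left-hand side evaluated at $\Lambda$, using that $f^{p}$ denotes the pointwise $p$-th power:
\[
	T_{\mathfrak{p},i}(f^{p})(\Lambda) = \sum_{\widetilde{\Lambda} \in T_{\mathfrak{p},i}(\Lambda)} (f^{p})(\widetilde{\Lambda}) = \sum_{\widetilde{\Lambda} \in T_{\mathfrak{p},i}(\Lambda)} f(\widetilde{\Lambda})^{p}.
\]

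The key step is then the observation that $C_{\infty}$, being the completed algebraic closure of $K_{\infty} = \mathds{F}_{q}((T^{-1}))$ with $\mathds{F}_{q}$ of characteristic $p$, itself has characteristic $p$; hence $x \mapsto x^{p}$ is a ring endomorphism of $C_{\infty}$ and in particular is additive on the finite sum above. This yields
\[
	\sum_{\widetilde{\Lambda} \in T_{\mathfrak{p},i}(\Lambda)} f(\widetilde{\Lambda})^{p} = \Bigl( \sum_{\widetilde{\Lambda} \in T_{\mathfrak{p},i}(\Lambda)} f(\widetilde{\Lambda}) \Bigr)^{p} = \bigl( (T_{\mathfrak{p},i} f)(\Lambda) \bigr)^{p} = (T_{\mathfrak{p},i} f)^{p}(\Lambda).
\]
Since $\Lambda$ (resp.\ the oriented lattice $(\Lambda,[B])$) was arbitrary, the claimed identity of functions follows, and the argument on $\mathcal{L}$ and on $\mathcal{L}^{\pm}$ is verbatim the same.

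There is essentially no genuine obstacle here: the statement is an immediate consequence of Frobenius being a homomorphism in characteristic $p$. The only point worth flagging is that the defining sum is a $\{0,1\}$-combination of the values of $f$ with no repeated terms and no cancellation, so that the termwise application of Frobenius is legitimate; the multiplicities $\nu(\lambda)$ that appeared in Example \ref{Example.Action-of-T-p-i-on-Eisenstein-series-Ek} play no role, since here we sum over super-lattices rather than regroup by lattice points.
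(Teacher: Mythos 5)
Your proof is correct and is essentially identical to the paper's own one-line argument: both expand \((T_{\mathfrak{p},i}f^{p})(\Lambda)\) as the finite sum over super-lattices \(\widetilde{\Lambda}\) and pull the \(p\)-th power out of the sum using the additivity of Frobenius in characteristic \(p\). Your additional remarks (the case of \(\mathcal{L}^{\pm}\), the absence of multiplicities) are accurate but not needed beyond what the paper records.
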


\begin{proof}
	\[
		(T_{\mathfrak{p},i}f^{p})(\Lambda) = \sum_{\widetilde{\Lambda} \supset \Lambda, \widetilde{\Lambda}/\Lambda \cong \mathds{F}_{\mathfrak{p}}^{i}} f^{p}(\widetilde{\Lambda}) = \Big( \sum_{\widetilde{\Lambda}} f(\widetilde{\Lambda}) \Big)^{p} = (T_{\mathfrak{p},i} f(\Lambda))^{p}.
	\]	
\end{proof}

\section{The effect of Hecke operators on \(t\)-expansions}\label{Section.The-effect-of-Hecke-operators-on-t-expansions}

We keep the notations and assumptions of the last section.

\subsection{} First, we compare the Hecke correspondence \(T_{\mathfrak{p},i} = T_{\mathfrak{p},i}^{(r)}\) defined on \(\mathcal{L} = \mathcal{L}^{r}\)
as in \eqref{Eq.Association-lattice-finite-collection-T-p-i} with the correspondences \(T_{\mathfrak{p},j}^{(r-1)}\) defined on the boundary,
that is, on \(\mathcal{L}^{r-1}\). We assume \(r \geq 3\), as the case \(r=2\) has been settled in \cite{Gekeler88} 7.3.

A splitting of the \(r\)-lattice \(\Lambda\) is the choice of a basis vector \(\lambda_{1}\) and a direct complement:
\begin{equation}
	\Lambda = A\lambda_{1} \oplus \Lambda'
\end{equation}
with \(\Lambda' \in \mathcal{L}^{r-1}\). We write \(\mathcal{L}^{\perp} = \mathcal{L}^{r,\perp}\) for the set of \(r\)-lattices provided with a
splitting \((\lambda_{1}, \Lambda')\). If \(\Lambda\) is presented as \(\Lambda_{\boldsymbol{\omega}}\), its \textbf{induced splitting} is
given by \(\lambda_{1} = \omega_{1}\) and \(\Lambda' = \Lambda_{\boldsymbol{\omega}'} = \sum_{2 \leq i \leq r} A\omega_{i}\).

\subsection{}\label{Subsection.Lambda-tilde-super-lattice}%
Let \(\widetilde{\Lambda}\) be a super-lattice of \(\Lambda = \Lambda_{\boldsymbol{\omega}}\) such that 
\(\widetilde{\Lambda}/\Lambda \cong \mathds{F}_{\mathfrak{p}}^{i}\) with \(1 \leq i < r\). Then 
\(\widetilde{\Lambda}' = (\widetilde{\Lambda})' \defeq \widetilde{\Lambda} \cap \sum_{2 \leq i \leq r} K\omega_{i}\) is a super-lattice 
of \(\Lambda' = \Lambda_{\boldsymbol{\omega}}\) and 
\begin{align}
	\text{either } \widetilde{\Lambda}'/\Lambda'		&\cong \mathds{F}_{\mathfrak{p}}^{i}	 	&&(\text{type 1}) \\
	\text{or } \widetilde{\Lambda}'/\Lambda'			&\cong \mathds{F}_{\mathfrak{p}}^{i-1}	&&(\text{type 2}) \nonumber	
\end{align}
holds. When \(\widetilde{\Lambda}\) varies over \(T_{\mathfrak{p},i}(\Lambda)\), the former occurs \(c_{r-1,i}(\mathfrak{p})\) times, while the
latter appears \(P^{r-i}c_{r-1,i-1}(\mathfrak{p})\) often, since each such \(\widetilde{\Lambda}'\) has precisely \(P^{r-1-(i-1)} = P^{r-i}\) 
extensions \(\widetilde{\Lambda}\) with \(\widetilde{\Lambda}/\Lambda \cong \mathds{F}_{\mathfrak{p}}^{i}\) (a precise description is given in
\ref{Subsection.Evaluation-of-4-4-3-terms-of-type-2}). Note that this provides a combinatorial interpretation of the rule \eqref{Eq.Recursion-c-r-i}.
There results the important identity of divisors on \(\mathcal{L}^{r-1}\):

\begin{Proposition}\label{Proposition.T-p-i-on-lattice}
	\[
		\big(T_{\mathfrak{p},i}^{(r)}(\Lambda_{\boldsymbol{\omega}}) \big)' = T_{\mathfrak{p},i}^{(r-1)}(\Lambda_{\boldsymbol{\omega}'}) + P^{r-i} T_{\mathfrak{p},i-1}^{(r-1)}(\Lambda_{\boldsymbol{\omega}'})
	\]	
\end{Proposition}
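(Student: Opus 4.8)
The plan is to translate the entire statement into linear algebra over \(\mathds{F}_{\mathfrak{p}} = A/\mathfrak{p}\) by passing to the \(r\)-dimensional quotient \(\mathfrak{p}^{-1}\Lambda/\Lambda\). Since \(\Lambda = A\omega_{1} \oplus \Lambda'\) with \(\Lambda' = \Lambda_{\boldsymbol{\omega}'}\), the induced splitting gives a direct-sum decomposition \(\mathfrak{p}^{-1}\Lambda/\Lambda = W_{1} \oplus W'\), where \(W_{1} = \mathfrak{p}^{-1}(A\omega_{1})/(A\omega_{1})\) is a line and \(W' = \mathfrak{p}^{-1}\Lambda'/\Lambda'\) has dimension \(r-1\). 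Super-lattices \(\widetilde{\Lambda} \in T_{\mathfrak{p},i}^{(r)}(\Lambda)\) correspond bijectively to \(i\)-dimensional subspaces \(V = \widetilde{\Lambda}/\Lambda \subset W_{1} \oplus W'\). The first step is the identity \(\widetilde{\Lambda}'/\Lambda' = V \cap W'\), viewed inside \(W'\): since \(\widetilde{\Lambda}' = \widetilde{\Lambda} \cap \sum_{2 \le j \le r} K\omega_{j}\) consists of the elements of \(\widetilde{\Lambda}\) with vanishing \(\omega_{1}\)-component, it lies in \(\mathfrak{p}^{-1}\Lambda'\), and a short diagram chase (correcting any representative by its \(A\omega_{1} \subseteq \Lambda\)-part) shows its image in \(\mathfrak{p}^{-1}\Lambda/\Lambda\) is exactly \(V \cap W'\). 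This is routine but must be checked carefully to justify the identification.

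With this in hand, the type dichotomy of \ref{Subsection.Lambda-tilde-super-lattice} becomes transparent: because \(W'\) has codimension \(1\) in \(W_{1} \oplus W'\), either \(V \subseteq W'\), whence \(\dim(V \cap W') = i\) (type \(1\)), or \(V \not\subseteq W'\), whence \(\dim(V \cap W') = i-1\) (type \(2\)). In the type-\(1\) case \(\widetilde{\Lambda}'/\Lambda' = V\) itself, and as \(V\) ranges over all \(i\)-dimensional subspaces of \(W'\) it reproduces each super-lattice counted by \(T_{\mathfrak{p},i}^{(r-1)}(\Lambda_{\boldsymbol{\omega}'})\) exactly once. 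Thus the type-\(1\) contribution to the divisor \(\big(T_{\mathfrak{p},i}^{(r)}(\Lambda_{\boldsymbol{\omega}})\big)'\) is precisely \(T_{\mathfrak{p},i}^{(r-1)}(\Lambda_{\boldsymbol{\omega}'})\).

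The heart of the argument, and the step I expect to require the most care, is the type-\(2\) multiplicity count. Here I would fix an \((i-1)\)-dimensional subspace \(U \subset W'\) and determine the fibre of \(V \mapsto V \cap W'\) over \(U\), i.e., count the \(i\)-dimensional \(V\) with \(V \cap W' = U\) and \(V \not\subseteq W'\). Each such \(V\) has the form \(V = U \oplus \mathds{F}_{\mathfrak{p}} v\) with \(v \notin W'\), so these \(V\) correspond to the lines in \((W_{1} \oplus W')/U\) not contained in the codimension-\(1\) subspace \(W'/U\). Since \(\dim(W_{1} \oplus W')/U = r-i+1\) and \(\dim W'/U = r-i\), there are \(P^{r-i+1} - P^{r-i} = P^{r-i}(P-1)\) vectors outside \(W'/U\); every such line meets \(W'/U\) only in \(0\), hence has exactly \(P-1\) nonzero representatives, giving \(P^{r-i}\) lines. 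Therefore each \(U\), regarded as a super-lattice in \(T_{\mathfrak{p},i-1}^{(r-1)}(\Lambda_{\boldsymbol{\omega}'})\), is hit with multiplicity \(P^{r-i}\), so the type-\(2\) contribution equals \(P^{r-i}\,T_{\mathfrak{p},i-1}^{(r-1)}(\Lambda_{\boldsymbol{\omega}'})\).

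Adding the two contributions yields the asserted identity. The one genuine subtlety is bookkeeping of multiplicities: the argument must be carried out in \(\mathds{Z}[\mathcal{L}^{r-1}]\) rather than with sets, so that the factor \(P^{r-i}\) is recorded faithfully instead of being collapsed when several distinct \(\widetilde{\Lambda}\) produce the same \(\widetilde{\Lambda}'\). Once the fibre count is in place this is immediate, and everything reproduces the combinatorial reading of the recursion \eqref{Eq.Recursion-c-r-i} already noted before the statement.
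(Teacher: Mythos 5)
Your proof is correct and follows essentially the same route as the paper: the dichotomy into types according to whether the subspace \(\widetilde{\Lambda}/\Lambda\) lies in \(W' = \mathfrak{p}^{-1}\Lambda'/\Lambda'\), a bijective matching of the type-1 lattices with \(T_{\mathfrak{p},i}^{(r-1)}(\Lambda_{\boldsymbol{\omega}'})\), and the fibre count \(P^{r-i}\) over each type-2 lattice \(\widetilde{\Lambda}'\). Your count of lines in \((W_{1}\oplus W')/U\) not contained in \(W'/U\) is precisely the quotient-space rendering of the paper's explicit parameterization \(\widetilde{\Lambda} = A(\pi^{-1}\omega_{1}+v)\oplus\widetilde{\Lambda}'\), with \(v\) running through a lift \(V\) of a complement of \(\widetilde{\Lambda}'/\Lambda'\) in \(\mathfrak{p}^{-1}\Lambda'/\Lambda'\), so the two arguments coincide in substance.
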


Here the left hand side is the divisor of \((~)'\)-parts of the various \(\widetilde{\Lambda}_{\boldsymbol{\omega}}\) that appear in
\(T_{\mathfrak{p},i}(\Lambda_{\boldsymbol{\omega}})\).

As \(P=0\) in \(C_{\infty}\), the second term of the right hand side vanishes whenever we evaluate \ref{Proposition.T-p-i-on-lattice} on
\(C_{\infty}\)-valued functions.

\subsection{}\label{Subsection.Relation-t-expansion-of-f-to-expansion-of-T-p-i-f}%
Let now \(f \in M_{k,\ell}\) have \(t\)-expansion
\begin{equation}\stepcounter{subsubsection}\label{Eq.t-expansion-of-f}%
	f(\boldsymbol{\omega}) = \sum_{n \geq 0} a_{n}(\boldsymbol{\omega}') t^{n}(\boldsymbol{\omega}).
\end{equation}
What can we say about the expansion of \(T_{\mathfrak{p},i}f\)? First note that the function \(t\) is not defined on the set \(\mathcal{L}\)
of lattices, but on the set \(\mathcal{L}^{\perp}\) of lattices provided with a splitting. If 
\(\widetilde{\Lambda}\in T_{\mathfrak{p},i}(\Lambda_{\boldsymbol{\omega}})\) then its induced splitting is as in 
\ref{Subsection.Lambda-tilde-super-lattice}, viz:
\subsubsection{}\label{Subsubsection.Induced-splitting}\stepcounter{equation}%
\((\widetilde{\omega}_{1}, \widetilde{\Lambda}')\) with \(\widetilde{\Lambda}' = \widetilde{\Lambda} \cap \sum_{2 \leq i \leq r} K\omega_{i}\)
and 
\begin{itemize} 
	\item \(\widetilde{\omega}_{1} = \omega_{1}\) if \(\widetilde{\Lambda}\) is of type 1.	
	\item \(\widetilde{\omega}_{1}\) to be described in \ref{Subsection.Evaluation-of-4-4-3-terms-of-type-2} if \(\widetilde{\Lambda}\) is of type 2.
\end{itemize}

Then \((T_{\mathfrak{p},i}f)(\boldsymbol{\omega})\) may be written as
\begin{align}
	(T_{\mathfrak{p},i}f)(\boldsymbol{\omega}) 	&= \sum_{\widetilde{\Lambda} \in T_{\mathfrak{p},i}(\Lambda_{\boldsymbol{\omega}})} \sum_{n \geq 0} a_{n}(\widetilde{\Lambda}')t^{n}(\widetilde{\Lambda}) \label{Eq.Expansion-of-T-p-i-f} \\
												&= \sum_{n \geq 0} \sum_{\widetilde{\Lambda}} a_{n}(\widetilde{\Lambda}')t^{n}(\widetilde{\Lambda}), \nonumber 
\end{align}
where \(\widetilde{\Lambda}\) carries its induced splitting and \(a_{n}(\widetilde{\Lambda}')\) is the weak modular form in \(\boldsymbol{\omega}'\)
that occurs in \eqref{Eq.t-expansion-of-f}, evaluated on the \((r-1)\)-lattice \(\widetilde{\Lambda}'\). Hence we are reduced to investigating
the inner sums, where we distinguish between terms for \(\widetilde{\Lambda}\) of type 1 or 2.

\subsection{} For \(\widetilde{\Lambda}\) as above, consider the inclusions 
\begin{equation}\stepcounter{subsubsection}\label{Eq.Inclusions-based-on-superlattice}%
	\Lambda' = \Lambda_{\boldsymbol{\omega}'} \subset \widetilde{\Lambda}' \subset \pi^{{-}1}\Lambda_{\boldsymbol{\omega}'} \subset \pi^{{-}1} \widetilde{\Lambda}'.
\end{equation}
As \(\mathds{F}_{\mathfrak{p}}\)-spaces, the dimensions of \(\widetilde{\Lambda}'/\Lambda_{\boldsymbol{\omega}'}\) and 
\(\pi^{{-}1}\Lambda_{\boldsymbol{\omega}'}/\widetilde{\Lambda}'\) are \((i,r-1-i)\) for type 1 and \((i-1,r-i)\) for type 2.

Let \(\phi^{\Lambda_{\boldsymbol{\omega}'}}\) and \(\phi^{\widetilde{\Lambda}'}\) be the respective Drinfeld modules of rank \(r-1\), associated
with these lattices and 

\subsubsection{}\stepcounter{equation}%
\(\varphi = \varphi^{\widetilde{\Lambda}'|\Lambda_{\boldsymbol{\omega}'}}\) the isogeny from \(\phi^{\widetilde{\Lambda}'}\) to 
\(\phi^{\Lambda_{\boldsymbol{\omega}'}}\) induced from \(\Lambda_{\boldsymbol{\omega}'} \hookrightarrow \widetilde{\Lambda}'\), normalized
with derivative 1.

Then for the corresponding exponential functions,
\begin{equation}\label{Eq.Identity-for-corresponding-exponential-functions}
	e^{\widetilde{\Lambda}'} = \varphi(e^{\Lambda_{\boldsymbol{\omega}'}})
\end{equation}
holds. Specifying \(\widetilde{\Lambda}'\) is the same as specifying an \(i\)-dimensional (type 1) or an \((i-1)\)-dimensional (type 2)
\(\mathds{F}_{\mathfrak{p}}\)-subspace \(H = H(\widetilde{\Lambda}')\) of the \((r-1)\)-dimensional \(\mathds{F}_{\mathfrak{p}}\)-space
\(\prescript{}{\mathfrak{p}} \phi^{\Lambda_{\boldsymbol{\omega}'}}\) of \(\mathfrak{p}\)-division points of 
\(\phi^{\Lambda_{\boldsymbol{\omega}'}}\). In fact, regarded as an additive polynomial, \(\varphi\) is the exponential function of \(H\):
\begin{equation}\label{Eq.Identity-for-phi-exponential-function-on-H}
	\varphi(X) = X \sideset{}{^{\prime}} \prod_{h \in H} (1 - X/h).
\end{equation}
It has shape
\[
	\varphi(X) = X + c_{1}X^{q} + \cdots + c_{d \cdot \dim H} X^{P^{\dim H}}
\]
(\(P = q^{d} = q^{\deg \mathfrak{p}}\), and \(\dim H = i\) or \(i-1\)).

The leading coefficient \(\Delta_{\varphi} \defeq c_{d \cdot \dim H}\) is non-vanishing and equals the product 
\((\sideset{}{^{\prime}}\prod_{h \in H} h)^{-1}\). Let 
\begin{equation}
	S_{H}(X) \defeq \Delta_{\varphi}^{{-}1} X^{q^{d \cdot \dim H}} \varphi(X^{{-}1}) = \sideset{}{^{\prime}} \prod_{h \in H} (1 - hX) = 1 + o(X^{q^{d \cdot \dim H-1}(q-1)})
\end{equation}
be the reciprocal polynomial of \(\varphi\). Like \(\varphi\) and \(H\), it depends on \(\widetilde{\Lambda}'\) and will be labelled 
\(S_{H} = S_{\widetilde{\Lambda}'}\) if the need arises. (The reader will notice the similarity of construction and usage of \(S_{H}\) with
that of the polynomials \(S_{n}\) of \eqref{Eq.Polynomial-expression-for-Sn}.)

\subsection{Evaluation of \eqref{Eq.Expansion-of-T-p-i-f}: terms of type 1}\label{Subsection.Evaluation-of-4-4-3-terms-of-type-1}%
Essentially we must express \(t(\widetilde{\Lambda})\) through \(t(\boldsymbol{\omega}) = (e^{\Lambda_{\boldsymbol{\omega}'}}(\omega_{1}))^{{-}1}\).
Suppose that \(\widetilde{\Lambda}\) is of type 1, so its splitting is \((\omega_{1}, \widetilde{\Lambda}')\). Therefore,  
\begin{align*}
	t(\widetilde{\Lambda}) 	&= (e^{\widetilde{\Lambda}'}(\omega_{1}))^{{-}1} \\
							&= \frac{1}{\varphi(e^{\Lambda_{\boldsymbol{\omega}'}}(\omega_{1}))} = \frac{1}{\varphi(t^{{-}1})} = \frac{t^{q^{di}}}{\Delta_{\varphi} \cdot S_{H}(t)} = \Delta_{\varphi}^{{-}1}\big(t^{q^{di}} + o(t^{q^{di} + q^{di-1}(q-1)})\big).
\end{align*}
We now label \(\Delta_{\varphi}\) as \(\Delta_{\widetilde{\Lambda}'}\) and \(S_{H}\) as \(S_{\widetilde{\Lambda}'}\). Then the term 
\(a_{n}(\widetilde{\Lambda}')t^{n}(\widetilde{\Lambda})\) of \eqref{Eq.Expansion-of-T-p-i-f} equals
\begin{equation}
	a_{n}(\widetilde{\Lambda}')t^{n}(\widetilde{\Lambda}) = \frac{a_{n}(\widetilde{\Lambda}')}{\Delta_{\widetilde{\Lambda}'}} \frac{t^{nq^{di}}}{S_{\widetilde{\Lambda}'}(t)}.
\end{equation}
Although we are unable to simplify it further, we can state that as a power series in \(t\), it has shape
\begin{equation}
	a_{n}(\widetilde{\Lambda}')t^{n}(\widetilde{\Lambda}) = C(\widetilde{\Lambda}')t^{nq^{di}} + o( t^{nq^{d i} + q^{di-1}(q-1)}).
\end{equation}
Here \(C(\widetilde{\Lambda}') \neq 0\) is a constant that depends only on \(\widetilde{\Lambda}'\). 

\subsection{Terms of type 2}\label{Subsection.Evaluation-of-4-4-3-terms-of-type-2}%
Fix some \(\widetilde{\Lambda}' \in T_{\mathfrak{p},i-1}(\Lambda')\), \(\Lambda' = \Lambda_{\boldsymbol{\omega}'}\). Let \(\overline{V}\) be 
an \(\mathds{F}_{\mathfrak{p}}\)-vector space complement of \(\widetilde{\Lambda}'/\Lambda'\) in \(\pi^{{-}1}\Lambda'/\Lambda'\), and lift it to 
an \(\mathds{F}\)-subspace \(V\) of \(\pi^{{-}1}\Lambda'\). Then \(\lvert V \rvert = P^{r-i} = q^{d(r-i)}\). Each \(\widetilde{\Lambda}\) 
with \(\widetilde{\Lambda}/\Lambda \cong \mathds{F}_{\mathfrak{p}}^{i}\) and 
\(\widetilde{\Lambda} \cap \sum_{2 \leq i \leq r} K\omega_{i} = \widetilde{\Lambda}'\) is of the shape
\begin{equation}\stepcounter{subsubsection}\label{Eq.Direct-sum-decomposition-of-super-lattice}%
	\widetilde{\Lambda} = A \widetilde{\omega}_{1} \oplus \widetilde{\Lambda}', \text{ where } \widetilde{\omega}_{1} = \pi^{{-}1}\omega_{1} + v \text{ with a well-defined } v \in V.
\end{equation}
Conversely, each such \(\widetilde{\Lambda}\) belongs to \(T_{\mathfrak{p},i}(\Lambda)\) and satisfies 
\(\widetilde{\Lambda} \cap \sum_{2 \leq i \leq r} K\omega_{i} =\) the given \(\widetilde{\Lambda}'\). The induced splitting on 
such a \(\widetilde{\Lambda}\) referred to in \eqref{Subsubsection.Induced-splitting} is 
\subsubsection{}\stepcounter{equation}%
\((\widetilde{\omega}_{1}, \widetilde{\Lambda}')\), and the value of \(t(\widetilde{\Lambda}, \widetilde{\omega}_{1}, \widetilde{\Lambda}')\) 
doesn't depend on the choices of \(\overline{V}\) and \(V\) made.

We further observe:
\begin{equation}\label{Eq.p-division-points-of-phi-widetilde-Lambda-prime}
	W = W(\widetilde{\Lambda}') \defeq e^{\widetilde{\Lambda}'}(V) \subset \prescript{}{\mathfrak{p}} \phi^{\widetilde{\Lambda}'}
\end{equation}
is the submodule of those \(\mathfrak{p}\)-division points of \(\phi^{\widetilde{\Lambda}'}\) which vanish under the isogeny \(\psi\) dual to
\(\varphi\).\footnote{Caution: There is no functorial duality of isogenies of Drinfeld modules; this requires enlarging the theory to Anderson modules \cite{Anderson86}. The present is merely an ad hoc construction.} Here \(\psi\) is such that 
\begin{align}
	\varphi \circ \psi = \pi^{{-}1} \phi_{\pi}^{\Lambda'}	&&(\text{see \eqref{Eq.Inclusions-based-on-superlattice}}).	
\end{align}
In analogy with \eqref{Eq.Identity-for-corresponding-exponential-functions} and \eqref{Eq.Identity-for-phi-exponential-function-on-H} we have
\begin{align}
	e^{\pi^{{-}1}\Lambda'} 	&= \psi(e^{\widetilde{\Lambda}'})\label{Eq.Identity-exponential-function-and-psi}
	\intertext{and}
	\psi(X)					&= X \sideset{}{^{\prime}} \prod_{w \in W} (1-X/w).
\end{align}

\subsection{}\label{Subsection.n-thGoss-polynomials-of-W}%
Let \(G_{n}(X) = G_{n,W}(X)\) be the \(n\)-th Goss polynomial of \(W\). (Recall that it depends on the choice of \(\widetilde{\Lambda}'\).) 
Now we are ready to evaluate \(\sum_{\widetilde{\Lambda} \mid \widetilde{\Lambda}'} t^{n}(\widetilde{\Lambda})\), where the sum is over 
the \(\widetilde{\Lambda}\) as in \eqref{Eq.Direct-sum-decomposition-of-super-lattice}. Namely,
\begin{align}
	\sum_{\widetilde{\Lambda} \mid \widetilde{\Lambda}'} t^{n}(\widetilde{\Lambda})	&= \sum_{v \in V} \frac{1}{e^{\widetilde{\Lambda}'}(\pi^{{-}1}\omega_{1} + v)^{n}} \\
			&= \sum_{w \in W} \frac{1}{(e^{\widetilde{\Lambda}'}(\omega_{1}/\pi) + w)^{n}}	&&\text{by \eqref{Eq.p-division-points-of-phi-widetilde-Lambda-prime}}. \nonumber 
	\intertext{The formal identity}
	\sum_{w \in W} \frac{1}{X-w}	&= \frac{1}{\psi(X)}	
	\intertext{gives}
	\sum_{w \in W} \frac{1}{e^{\widetilde{\Lambda}'}(\omega_{1}/\pi) + w}	&= \frac{1}{\psi(e^{\widetilde{\Lambda}'}(\omega_{1}/\pi))} \\
											&= \frac{1}{e^{\pi^{{-}1}\Lambda'}(\omega_{1}/\pi)}	&&(\text{by \eqref{Eq.Identity-exponential-function-and-psi}}) \nonumber \\
											&= \frac{1}{\pi^{{-}1} e^{\Lambda'}(\omega_{1})} = \pi t(\boldsymbol{\omega}). \nonumber 
\end{align}
We conclude with the defining property of Goss polynomials to find that
\begin{equation}
	\sum_{\widetilde{\Lambda} \mid \widetilde{\Lambda}'} t^{n}(\widetilde{\Lambda}) = G_{n,W}(\pi t(\boldsymbol{\omega})).
\end{equation}
We collect the results so far. 

\begin{Proposition}\label{Proposition.t-expansion-of-f-in-M-k-l-to-power-series-expansion-of-T-p-i-f}
	Let \(f \in M_{k,\ell}\) have \(t\)-expansion
	\begin{align*}
		f(\boldsymbol{\omega}) = \sum_{n \geq 0} a_{n}(\boldsymbol{\omega}') t^{n}(\boldsymbol{\omega}) &&\text{as in \eqref{Eq.t-expansion-of-f}}.
	\end{align*}
	Then as a power series in \(t\), the form \(T_{\mathfrak{p},i}f\) is given by 
	\begin{equation} \label{Eq.t-expansion-of-f-in-M-k-l-to-power-series-expansion-of-T-p-i-f}
		T_{\mathfrak{p},i}f(\boldsymbol{\omega}) = \sum_{n \geq 0} \Big( \sum_{\widetilde{\Lambda}' \text{ of type 1}} \frac{a_{n}(\widetilde{\Lambda}')}{\Delta_{\widetilde{\Lambda}'}} \frac{t^{nq^{di}}}{S_{\widetilde{\Lambda}'}(t)} + \sum_{\widetilde{\Lambda}' \text{ of type 2}} a_{n}(\widetilde{\Lambda}') G_{n}^{\widetilde{\Lambda}'}(\pi t(\boldsymbol{\omega})) \Big).
	\end{equation}
	Here the \(\widetilde{\Lambda}'\) of type 1 (resp. type 2) run through \(T_{\mathfrak{p},i}^{(r-1)}(\Lambda')\) (resp. through
	\(T_{\mathfrak{p},i-1}^{(r-1)}(\Lambda')\)) and \(\Delta_{\widetilde{\Lambda}'}\), \(S_{\widetilde{\Lambda}'}\), and \(G_{n}^{\widetilde{\Lambda}'} = G_{n,H}\) are the quantities determined by \(\widetilde{\Lambda}'\) and described in 
	\ref{Subsection.Evaluation-of-4-4-3-terms-of-type-1} and \ref{Subsection.n-thGoss-polynomials-of-W}.
\end{Proposition}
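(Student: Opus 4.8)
The plan is to assemble the statement from the term-by-term analysis already performed in \ref{Subsection.Evaluation-of-4-4-3-terms-of-type-1} and \ref{Subsection.n-thGoss-polynomials-of-W}, using the divisor identity of Proposition \ref{Proposition.T-p-i-on-lattice} to organize the bookkeeping. I would start from the expansion \eqref{Eq.Expansion-of-T-p-i-f}, written as
\[
	T_{\mathfrak{p},i}f(\boldsymbol{\omega}) = \sum_{n \geq 0} \sum_{\widetilde{\Lambda} \in T_{\mathfrak{p},i}(\Lambda_{\boldsymbol{\omega}})} a_{n}(\widetilde{\Lambda}') \, t^{n}(\widetilde{\Lambda}),
\]
each $\widetilde{\Lambda}$ carrying its induced splitting, and split the finite inner sum over $\widetilde{\Lambda}$ according to the type-1/type-2 dichotomy of \ref{Subsection.Lambda-tilde-super-lattice}. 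Taking $(~)'$-parts sends the family $T_{\mathfrak{p},i}(\Lambda_{\boldsymbol{\omega}})$ onto the divisor $T_{\mathfrak{p},i}^{(r-1)}(\Lambda') + P^{r-i} T_{\mathfrak{p},i-1}^{(r-1)}(\Lambda')$ by Proposition \ref{Proposition.T-p-i-on-lattice}: the first summand enumerates the type-1 super-lattices, each coming from a unique $\widetilde{\Lambda}' \in T_{\mathfrak{p},i}^{(r-1)}(\Lambda')$, and the second the type-2 ones, with each $\widetilde{\Lambda}' \in T_{\mathfrak{p},i-1}^{(r-1)}(\Lambda')$ carrying precisely $P^{r-i}$ super-lattices.

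For the type-1 block I would substitute the identity derived in \ref{Subsection.Evaluation-of-4-4-3-terms-of-type-1}. Since a type-1 splitting has $\widetilde{\omega}_{1} = \omega_{1}$, the $(~)'$-part determines $\widetilde{\Lambda}$ uniquely, and $a_{n}(\widetilde{\Lambda}')\, t^{n}(\widetilde{\Lambda})$ equals $\tfrac{a_{n}(\widetilde{\Lambda}')}{\Delta_{\widetilde{\Lambda}'}} \tfrac{t^{nq^{di}}}{S_{\widetilde{\Lambda}'}(t)}$; summing over $\widetilde{\Lambda}' \in T_{\mathfrak{p},i}^{(r-1)}(\Lambda')$ reproduces the first inner sum of \eqref{Eq.t-expansion-of-f-in-M-k-l-to-power-series-expansion-of-T-p-i-f} verbatim.

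The type-2 block is where the real content sits and is the step I expect to demand the most care. The point is that $a_{n}$ depends only on $\widetilde{\Lambda}'$ while $t^{n}(\widetilde{\Lambda})$ depends on the full splitting through $\widetilde{\omega}_{1} = \pi^{-1}\omega_{1} + v$; so for a fixed $\widetilde{\Lambda}' \in T_{\mathfrak{p},i-1}^{(r-1)}(\Lambda')$ I may pull $a_{n}(\widetilde{\Lambda}')$ out of the sum over its $P^{r-i}$ super-lattices and am left with the fibre sum $\sum_{\widetilde{\Lambda} \mid \widetilde{\Lambda}'} t^{n}(\widetilde{\Lambda})$, which \ref{Subsection.n-thGoss-polynomials-of-W} evaluates to $G_{n}^{\widetilde{\Lambda}'}(\pi t(\boldsymbol{\omega}))$. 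This is exactly the mechanism by which the multiplicity $P^{r-i}$ of Proposition \ref{Proposition.T-p-i-on-lattice} fails to annihilate the type-2 contribution, as it would for a bona fide $C_{\infty}$-valued function of $\widetilde{\Lambda}'$ alone: here the $P^{r-i}$ equal copies are replaced by $P^{r-i}$ \emph{distinct} values of $t^{n}(\widetilde{\Lambda})$, and the defining property of Goss polynomials repackages their sum as $G_{n}^{\widetilde{\Lambda}'}(\pi t)$. Adding the two blocks and regrouping by $n$ yields \eqref{Eq.t-expansion-of-f-in-M-k-l-to-power-series-expansion-of-T-p-i-f}; the regrouping is legitimate as an identity of power series in $t$ because, for each $n$, the type-1 terms begin in $t$-degree $nq^{di}$ and the type-2 terms $G_{n}^{\widetilde{\Lambda}'}(\pi t)$ in a positive degree that grows with $n$, so only finitely many summands affect any given power of $t$.
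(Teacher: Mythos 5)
Your proposal is correct and follows essentially the same route as the paper: the Proposition is precisely the collection of the term-by-term evaluations in \ref{Subsection.Evaluation-of-4-4-3-terms-of-type-1} and \ref{Subsection.Evaluation-of-4-4-3-terms-of-type-2}--\ref{Subsection.n-thGoss-polynomials-of-W}, organized by the type-1/type-2 dichotomy of \ref{Subsection.Lambda-tilde-super-lattice}, exactly as you assemble it. Your added observations --- that the multiplicity \(P^{r-i}\) does not kill the type-2 block because \(t^{n}(\widetilde{\Lambda})\) depends on the splitting rather than on \(\widetilde{\Lambda}'\) alone, and that the regrouping is a formal power-series identity since type-1 terms start in degree \(nq^{di}\) and the order of \(G_{n}^{\widetilde{\Lambda}'}\) grows with \(n\) --- match the paper's own remarks following the Proposition.
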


Some explanation is in order.

\begin{Remarks}
	\begin{enumerate}[wide, label=(\roman*)]
		\item The terms corresponding to \(\widetilde{\Lambda}'\) of type 1 are power series of order \(nq^{di} \gg n\) in 
		\(t(\boldsymbol{\omega})\), while terms corresponding to \(\widetilde{\Lambda}'\) of type 2 are polynomials of degree \(n\).
		The vanishing order of a Goss polynomial \(G_{m}^{\widetilde{\Lambda}'}\) in \(X = 0\) is larger or equal to \([m/q^{d(r-i)}]+1\) by 
		\ref{Proposition.Characterization-Goss-polynomials}(ix). Hence, for a given term \(a_{n} = a_{n}(T_{\mathfrak{p},i}f)\) of the
		expansion of \(T_{\mathfrak{p},i}f\) with \(n > 0\) only finitely many of the 
		\(a_{m}(\widetilde{\Lambda}')G_{m}^{\widetilde{\Lambda}'}(\pi t)\) may contribute. In other words, 
		\eqref{Eq.t-expansion-of-f-in-M-k-l-to-power-series-expansion-of-T-p-i-f} is an identity of formal power series, whose evaluation
		requires no analysis.
		\item For \(f \in M_{k,\ell}\) also \(T_{\mathfrak{p},i}f \in M_{k,\ell}\), and thus the coefficients \(a_{n}(T_{\mathfrak{p},i}f)\)
		as functions in \(\boldsymbol{\omega}'\) are weak modular forms for \(\Gamma' = \GL(r-1,A)\) of weight \(k-n\) and type \(\ell\)
		(\cite{Gekeler25} 7.14, \cite{BassonBreuerPink24} Theorem 5.9). On the other hand, the \(a_{m}(\widetilde{\Lambda}')\) that appear in 
		\eqref{Eq.t-expansion-of-f-in-M-k-l-to-power-series-expansion-of-T-p-i-f} (like the \(\Delta_{\widetilde{\Lambda}'}\) and the
		coefficients of \(S_{\widetilde{\Lambda}'}(X)\) and \(G_{m}^{\widetilde{\Lambda}'}(X)\)) as functions in \(\boldsymbol{\omega}'\)
		are weakly modular only for the congruence subgroup \(\Gamma_{\widetilde{\Lambda}'}'\) of \(\Gamma'\) that preserves
		\(\widetilde{\Lambda}'\). Hence the summation over the \(\widetilde{\Lambda}'\) in 
		\eqref{Eq.t-expansion-of-f-in-M-k-l-to-power-series-expansion-of-T-p-i-f} symmetrizes the coefficients. To simplify the sum, we had
		(at least) to solve the basic problem circumscribed in Problem \ref{Problem.Evaluation-of-Goss-polynomials-in-terms-of-Phi}.
		\item The formula collapses for \(n = 0\) to 
		\begin{equation} 
			a_{0}(T_{\mathfrak{p},i}^{(r)}f)(\boldsymbol{\omega}') = \sum_{\widetilde{\Lambda}' \text{ of type 1}} a_{0}(\widetilde{\Lambda}') = (T_{\mathfrak{p},i}^{(r-1)}a_{0})(\boldsymbol{\omega}'),	
		\end{equation}
		in keeping with Proposition \ref{Proposition.T-p-i-on-lattice} and the result \ref{Example.Action-of-T-p-i-on-Eisenstein-series-Ek}
		about Eisenstein series.
	\end{enumerate}	
\end{Remarks}

\begin{Problem}\label{Problem.Evaluation-of-Goss-polynomials-in-terms-of-Phi}
	Let a Drinfeld module \(\phi\) of rank \(r\) over \(C_{\infty}\) be given, with module \(\prescript{}{\mathfrak{p}}\phi\) of \(\mathfrak{p}\)-division
	points, where \(\mathfrak{p}\) is a prime of \(A\). Evaluate in terms of \(\phi\) the sum of Goss polynomials
	\[
		\sum G_{n,W}(X),
	\]	
	where \(W\) runs through the sub-\(\mathds{F}_{\mathfrak{p}}\)-modules of \(\prescript{}{\mathfrak{p}}\phi\) of 
	\(\mathds{F}_{\mathfrak{p}}\)-dimension \(i\) (\(1 \leq i < r\))! 
	
	As the complexity of \eqref{Eq.t-expansion-of-f-in-M-k-l-to-power-series-expansion-of-T-p-i-f} shows, \(t\)-expansions are not very well 
	adapted to Hecke operators. \(A\)-expansions like \eqref{Eq.Ek-omega-as-Ek-omega-prime-minus-Goss-polynomials} for Eisenstein series do 
	better in this respect, as the next result shows.
\end{Problem}

\begin{Proposition}\label{Proposition.Action-of-T-p-i-on-Goss-polynomials}
	Let \((G_{n, \Lambda'})_{n \in \mathds{N}_{0}}\) be the sequence of Goss polynomials for \(\Lambda' = \Lambda_{\boldsymbol{\omega}}\) and
	\(t_{a}\) (\(a \in A\) monic) the \(a\)-variant of \(t\) as in \ref{Subsection.n-th-variant-tn-of-uniformizer-of-t}. Then \(T_{\mathfrak{p},i}\)
	acts on \(G_{n}(t_{a}) = G_{n, \Lambda'}(t_{a})\) through 
	\begin{align}
		T_{\mathfrak{p},i}(G_{n}(t_{a})) 	&= \pi^{n}G_{n}(t_{a\pi}),	&&a \in \mathfrak{p}, \\
											&= \pi^{n}G_{n}(t_{a\pi}) + \pi^{n}G_{n}(t_{a}),	&&a \notin \mathfrak{p}. \nonumber
	\end{align}	
\end{Proposition}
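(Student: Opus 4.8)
The plan is to recognise \(G_{n}(t_{a})\) as a concrete partial Eisenstein sum and then run the same lattice-counting argument as in \ref{Example.Action-of-T-p-i-on-Eisenstein-series-Ek}, organised by the type-1/type-2 splitting of \ref{Subsection.Lambda-tilde-super-lattice}. Combining \ref{Proposition.Characterization-Goss-polynomials}(i) with the definition of \(t_{a}\) in \ref{Subsection.n-th-variant-tn-of-uniformizer-of-t} gives
\[
	G_{n}(t_{a})(\boldsymbol{\omega}) = G_{n,\Lambda'}\big(s_{1}^{\Lambda'}(a\omega_{1})\big) = s_{n}^{\Lambda'}(a\omega_{1}) = \sum_{\lambda \in \Lambda'} \frac{1}{(a\omega_{1} - \lambda)^{n}},
\]
so that \(G_{n}(t_{a})\) is a weight-\(n\) function on the set \(\mathcal{L}^{\perp}\) of split lattices whose value on a splitting \((v_{1}, L')\) is \(s_{n}^{L'}(av_{1})\). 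I would therefore compute \(T_{\mathfrak{p},i}(G_{n}(t_{a}))\) by the recipe behind Proposition \ref{Proposition.t-expansion-of-f-in-M-k-l-to-power-series-expansion-of-T-p-i-f}: sum \(s_{n}^{\widetilde{\Lambda}'}(a\widetilde{\omega}_{1})\) over the \(\widetilde{\Lambda} \in T_{\mathfrak{p},i}(\Lambda_{\boldsymbol{\omega}})\) carrying their induced splittings \((\widetilde{\omega}_{1}, \widetilde{\Lambda}')\), and split this sum according to \ref{Subsection.Lambda-tilde-super-lattice}.

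For the type-1 terms one has \(\widetilde{\omega}_{1} = \omega_{1}\) while \(\widetilde{\Lambda}'\) runs through \(T^{(r-1)}_{\mathfrak{p},i}(\Lambda')\). Interchanging the two summations exactly as in \ref{Example.Action-of-T-p-i-on-Eisenstein-series-Ek}, one rank lower, each \(\widetilde{\lambda}' \in \pi^{-1}\Lambda'\) acquires the multiplicity \(c_{r-1,i}(\mathfrak{p})\) or \(c_{r-2,i-1}(\mathfrak{p})\) according as \(\widetilde{\lambda}' \in \Lambda'\) or not; both are \(\equiv 1 \pmod{p}\), so these weights vanish in \(C_{\infty}\) and the type-1 contribution collapses to \(s_{n}^{\pi^{-1}\Lambda'}(a\omega_{1})\). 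The homogeneity \(s_{n}^{\pi^{-1}\Lambda'}(\pi^{-1}z) = \pi^{n}s_{n}^{\Lambda'}(z)\), applied with \(z = a\pi\omega_{1}\) (again monic), rewrites this as \(\pi^{n}G_{n}(t_{a\pi})\). This is the term common to both cases of the assertion.

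The substance is in the type-2 terms, where \(\widetilde{\Lambda}'\) runs through \(T^{(r-1)}_{\mathfrak{p},i-1}(\Lambda')\) and, for each, \(\widetilde{\omega}_{1} = \pi^{-1}\omega_{1} + v\) with \(v\) ranging over the transversal \(V\) of \ref{Subsection.Evaluation-of-4-4-3-terms-of-type-2}, where \(\lvert V \rvert = P^{r-i}\). Here the class of \(a\) modulo \(\mathfrak{p}\) enters through the \(A/\mathfrak{p}\)-module \(\pi^{-1}\Lambda'/\widetilde{\Lambda}'\). If \(a \in \mathfrak{p}\), then \(av \in \widetilde{\Lambda}'\) for every \(v\), so by lattice-translation invariance each summand equals \(s_{n}^{\widetilde{\Lambda}'}(a\pi^{-1}\omega_{1})\); the sum over \(V\) is then \(P^{r-i}\,s_{n}^{\widetilde{\Lambda}'}(a\pi^{-1}\omega_{1}) = 0\) since \(P^{r-i} \equiv 0 \pmod{p}\). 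If \(a \notin \mathfrak{p}\), multiplication by \(a\) permutes \(\pi^{-1}\Lambda'/\widetilde{\Lambda}'\), so the cosets \(av + \widetilde{\Lambda}'\) exhaust \(\pi^{-1}\Lambda'\) and the double sum reorganises into \(s_{n}^{\pi^{-1}\Lambda'}(a\pi^{-1}\omega_{1}) = \pi^{n}G_{n}(t_{a})\), independently of \(\widetilde{\Lambda}'\); the remaining summation over the \(c_{r-1,i-1}(\mathfrak{p}) \equiv 1 \pmod{p}\) choices of \(\widetilde{\Lambda}'\) leaves \(\pi^{n}G_{n}(t_{a})\). Adding the type-1 term gives \(\pi^{n}G_{n}(t_{a\pi})\) for \(a \in \mathfrak{p}\) and \(\pi^{n}G_{n}(t_{a\pi}) + \pi^{n}G_{n}(t_{a})\) for \(a \notin \mathfrak{p}\), as claimed.

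The hard part will be the type-2 bookkeeping: I must confirm that the induced first vector really is \(\widetilde{\omega}_{1} = \pi^{-1}\omega_{1} + v\) and that the coset reshuffling by \(a\) is exact and independent of the auxiliary choices of \(\overline{V}\) and \(V\); this is exactly what the well-definedness statement in \ref{Subsection.Evaluation-of-4-4-3-terms-of-type-2} supplies. As a safeguard against indexing or sign slips, I would first check that summing the proposed identity over all monic \(a\) reproduces \(T_{\mathfrak{p},i}E_{n} = \pi^{n}E_{n}\) through \eqref{Eq.Ek-omega-as-Ek-omega-prime-minus-Goss-polynomials}; this confirms in particular that the vanishing of the type-2 contribution when \(a \in \mathfrak{p}\) is correctly balanced by its appearance when \(a \notin \mathfrak{p}\).
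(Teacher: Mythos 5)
Your proof is correct and is essentially the paper's own argument: both rewrite \(G_{n}(t_{a})\) as the lattice sum \(s_{n}^{\Lambda'}(a\omega_{1}) = \sum_{\mathbf{b}\in A^{r-1}} (a\omega_{1}+\mathbf{b}\boldsymbol{\omega}')^{-n}\), evaluate \(T_{\mathfrak{p},i}\) by summing over the \(\widetilde{\Lambda}\in T_{\mathfrak{p},i}(\Lambda_{\boldsymbol{\omega}})\) with their induced splittings separated into type 1 and type 2, collapse the type-1 part to \(\pi^{n}G_{n}(t_{a\pi})\) using \(c_{r-1,i}(\mathfrak{p})\equiv c_{r-2,i-1}(\mathfrak{p})\equiv 1 \pmod{p}\) and the homogeneity of \(s_{n}\), and annihilate the type-2 part for \(a\in\mathfrak{p}\) via translation invariance together with \(\lvert V\rvert = P^{r-i}\equiv 0 \pmod{p}\). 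Two minor remarks: in the case \(a\notin\mathfrak{p}\) you fix \(\widetilde{\Lambda}'\) and use the bijection \((v,\lambda)\mapsto av-\lambda\) from \(V\times\widetilde{\Lambda}'\) onto \(\pi^{-1}\Lambda'\) before invoking \(c_{r-1,i-1}(\mathfrak{p})\equiv 1\pmod{p}\), which is a slightly tidier bookkeeping than the paper's count of solution triples \(\nu(\mathbf{c})\) with its case analysis on \(\mathbf{c}\) (same count, same conclusion); and in your type-1 step the multiplicities \(\equiv 1\pmod{p}\) become \(1\) in \(C_{\infty}\) rather than \enquote{vanish} --- only weights divisible by \(p\), such as \(P^{r-i}\), vanish.
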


\begin{proof}
	\begin{enumerate}[wide, label=(\roman*)]
		\item We must evaluate the left hand side on \(\Lambda = \Lambda_{\boldsymbol{\omega}}\), that is, replace \(\Lambda\) by 
		\(T_{\mathfrak{p},i}(\Lambda) = \{ \widetilde{\Lambda} \text{ as in \ref{Subsection.Relation-t-expansion-of-f-to-expansion-of-T-p-i-f}} \}\), where
		each \(\widetilde{\Lambda}\), depending on its type 1 or 2, carries its induced splitting.
		\item By the calculation in \ref{Subsection.t-expansions-of-Eisenstein-series},
		\begin{equation}
			G_{n}(t_{a}(\boldsymbol{\omega})) = \sum_{\mathbf{b} \in A^{r-1}} \frac{1}{(a\omega_{1} + \mathbf{b}\boldsymbol{\omega}')^{n}}.
		\end{equation}
		Therefore,
		\[
			\sum_{\substack{\widetilde{\Lambda} \in T_{\mathfrak{p},i}(\Lambda) \\ \text{of type 1}}} G_{n}(t_{a}(\widetilde{\Lambda})) = \sum_{\mathbf{b} \in \mathfrak{p}^{-1}A^{r-1}} \frac{\nu(\mathbf{b})}{(a \omega_{1} + \mathbf{b}\boldsymbol{\omega}')^{n}},
		\]
		where \(\nu(\mathbf{b})\) is the number of \(A\)-modules \(L \subset \mathfrak{p}^{-1}A^{r-1}\) above \(A^{r-1}\) such that \(L/A^{r-1}\) is
		isomorphic with \(\mathds{F}_{\mathfrak{p}}^{i}\) and \(\mathbf{b} \in L\). As in Example \ref{Example.Action-of-T-p-i-on-Eisenstein-series-Ek},
		\begin{align*}
			\nu(\mathbf{b})	&= c_{r-1,i}(\mathfrak{p}),		&&\text{if } \mathbf{b} \in A^{r-1} \\
							&= c_{r-2,i-1}(\mathfrak{p}),	&&\text{if } \mathbf{b} \notin A^{r-1},
		\end{align*}
		and is \(\equiv 1 \pmod{p}\) anyway. Hence
		\begin{align*}
			\sum_{\widetilde{\Lambda} \text{ of type 1}} G_{n}(t_{a}(\widetilde{\Lambda})) 	&= \sum_{\mathbf{b} \in \mathfrak{p}^{-1}A^{r-1}} \frac{1}{(a\omega_{1} + \mathbf{b}\boldsymbol{\omega}')^{n}} \\
					&= \sum_{\mathbf{b} \in A^{r-1}} \frac{1}{\pi^{n}(a\pi \omega_{1} + \mathbf{b}\boldsymbol{\omega}')^{n}} = \pi^{n} G_{n}(t_{a\pi}(\boldsymbol{\omega})).
		\end{align*}
		\item We evaluate the sum over the \(\widetilde{\Lambda}\) of type 2 in the same manner. These are obtained from \(A\)-modules \(L\) with
		\begin{equation}\label{Eq.Inclusion-of-A-modules}
			A^{r-1} \subset L \subset \mathfrak{p}^{-1}A^{r-1} \quad \text{and} \quad L/A^{r-1} \cong \mathds{F}_{\mathfrak{p}}^{i-1}
		\end{equation}
		as follows. Let
		\[
			L_{\boldsymbol{\omega}'} \defeq \{ \mathbf{b} \boldsymbol{\omega}' \mid \mathbf{b} \in L \}.
		\]
		Fix such an \(L\) and choose an \(\mathds{F}\)-complement \(V_{L}\) of \(L\) in \(\mathfrak{p}^{-1}A^{r-1}\) as in 
		\ref{Subsection.Evaluation-of-4-4-3-terms-of-type-2}. Then the \(\widetilde{\Lambda}\) of type 2 with \((\widetilde{\Lambda}') = L_{\boldsymbol{\omega}'}\) are the
		\begin{equation}
			\widetilde{\Lambda} = A\widetilde{\omega}_{1} \oplus L_{\boldsymbol{\omega}'} \quad \text{with} \quad \widetilde{\omega}_{1} = \pi^{-1}\omega_{1} + \mathbf{v} \boldsymbol{\omega}',
		\end{equation}
		where \(\mathbf{v}\) runs through \(V_{L}\). (This is just another way to state \eqref{Eq.Direct-sum-decomposition-of-super-lattice}.) We get
		\begin{equation}\label{Eq.Sum-of-Goss-polynomials}
			\sum_{\substack{\widetilde{\Lambda} \in T_{\mathfrak{p},i}(\Lambda) \\ \text{of type 2}}} G_{n}(t_{a}(\widetilde{\Lambda})) = \sum_{\substack{L \text{ as in} \\ \eqref{Eq.Inclusion-of-A-modules}}} \sum_{\mathbf{v} \in V_{L}} \sum_{\mathbf{b} \in L} \frac{1}{(a \frac{\omega_{1}}{\pi} + a \mathbf{v} \boldsymbol{\omega}' + \mathbf{b} \boldsymbol{\omega}')^{n}}.
		\end{equation}
		\item Suppose that \fbox{\(a \in \mathfrak{p}\)}. Then \(a\mathbf{v} \in A^{r-1} \subset L\) for \(\mathbf{v} \in V_{L}\), so the term 
		\(a\mathbf{v} \boldsymbol{\omega}'\) in the denominator may be omitted, the summation over \(\mathbf{v}\in V_{L}\) is multiplication by
		\(\lvert V_{L} \rvert\), and is thus 0. Therefore, \(\sum_{\widetilde{\Lambda} \text{ of type 2}} G_{n}(t_{a}(\widetilde{\Lambda})) = 0\) in
		this case.
		\item Now suppose that \fbox{\(a \notin \mathfrak{p}\)}. Our sum \eqref{Eq.Sum-of-Goss-polynomials} is
		\[
			\sum_{\mathbf{c} \in \pi^{-1}A^{r-1}} \frac{\nu(\mathbf{c})}{(a \frac{\omega_{1}}{\pi} + \mathbf{c} \boldsymbol{\omega}')^{n}},
		\]
		where now \(\nu(\mathbf{c})\) is the number of triples \((L, \mathbf{v} \in V_{L}, \mathbf{b} \in L)\) with \(L\) as in 
		\eqref{Eq.Inclusion-of-A-modules} and \(a\mathbf{v} + \mathbf{b} = \mathbf{c}\).
		
		If \fbox{\(\mathbf{c} \in A^{r-1}\)}, then \(\mathbf{v}\) must vanish and \(\mathbf{b} = \mathbf{c}\), so 
		\(\nu(\mathbf{c}) = \lvert \{ L \} \rvert = c_{r-1,i-1}(\mathfrak{p})\).
		
		If \fbox{\(\mathbf{c} \notin A^{r-1}\)} and \fbox{\(i \geq 2\)}, the triples that solve \(a \mathbf{v}+ \mathbf{b} = \mathbf{c}\) are 
		\((L, \mathbf{0}, \mathbf{c})\) with \(\mathbf{c} \in L\) and \((L, \mathbf{v}, \mathbf{c} - a\mathbf{v})\) with \(\mathbf{c} \notin L\),
		\(a \mathbf{v} \equiv \mathbf{c} \pmod{L}\). 
		
		The former ones are \(c_{r-1,i-1}(\mathfrak{p})\) in number, the latter ones are \(\lvert \{ L \mid \mathbf{c}\notin L\}\rvert = c_{r-1,i-1}(\mathfrak{p}) - c_{r-2,i-2}(\mathfrak{p})\) many.
		
		Finally, if \fbox{\(\mathbf{c} \notin A^{r-1}\)} and \fbox{\(i=1\)}, there is only \(L = A^{r-1}\), and the unique solution triple is 
		\((A^{r-1}, \mathbf{v}, \mathbf{c} - a\mathbf{v})\) with \(a\mathbf{v} \equiv \mathbf{c} \pmod{A^{r-1}}\).
		
		As all the \(c_{*,*}(\mathfrak{p})\) are congruent to \(1 \pmod{p}\), \(\nu(\mathbf{c}) \equiv 1 \pmod{p}\), and the sum 
		\eqref{Eq.Sum-of-Goss-polynomials} becomes
		\[
			\sum_{\mathbf{c} \in \pi^{-1}A^{r-1}} \frac{1}{(a \frac{\omega_{1}}{\pi} + \mathbf{c}\boldsymbol{\omega}')^{n}} = \pi^{n} \sum_{\mathbf{c} \in A^{r-1}} \frac{1}{(a\omega_{1} + \mathbf{c}\boldsymbol{\omega}')^{n}} = \pi^{n} G_{n}(t_{a}(\boldsymbol{\omega})).
		\]
		The result now follows from (ii), (iv) and (v).
	\end{enumerate}
\end{proof}

\begin{Remark}
	Like on the Eisenstein series \(E_{k}\), the different \(T_{\mathfrak{p},i}\) (\(\mathfrak{p} = (\pi)\) fixed) do not differ on the
	\(G_{n}(t_{a})\). In fact, we can easily derive the Hecke action on the \(E_{k}\) from that on the \(G_{k}(t_{a})\). We label the operators by their
	respective ranks \(r\) and \(r-1\) and use induction. Then
	\begin{align*}
		T_{\mathfrak{p},i}^{(r)}(E_{k}^{(r)})	&= T_{\mathfrak{p},i}^{(r)}(E_{k}^{(r-1)}) - \sum_{a \text{ monic}} G_{k}(t_{a}) ) 	\qquad \text{by \eqref{Eq.Ek-omega-as-Ek-omega-prime-minus-Goss-polynomials}	} \\
												&= T_{\mathfrak{p},i}^{(r-1)} E_{k}^{(r-1)} - \pi^{k} \sum_{\substack{a \text{ monic} \\ a \in \mathfrak{p}}} G_{k}(t_{a\pi}) - \pi^{k} \sum_{\substack{a \text{ monic} \\ (a, \mathfrak{p})=1}} (G_{k}(t_{a\pi}) + G_{k}(t_{a}))
		\intertext{(by Propositions \ref{Proposition.T-p-i-on-lattice} and \ref{Proposition.Action-of-T-p-i-on-Goss-polynomials})}
												&= \pi^{k} E_{k}^{(r-1)} - \pi^{k} \sum_{a \text{ monic}} G_{k}(t_{a}) = \pi^{k} E_{k}^{(r)}
	\end{align*}
	(by induction hypothesis: \(E_{k}^{(r-1)}\) is the restriction of \(E_{k}^{(r)}\) to the boundary). We hope that this proof scheme may eventually
	be applied to a larger class of modular forms that have \(A\)-expansions through the \(G_{k}(t_{a})\) in the style of Petrov \cite{Petrov13}.
\end{Remark}

\section{The Hecke action on the basic modular forms}\label{Section.The-Hecke-action-on-the-basic-modular-forms}

We allow \(r \geq 2\) but assume for the largest part of the section that the type \(\ell\) of a modular form \(f\) is 0. Then it is in fact a
power series in \(t^{q-1}\).

Let us consider the following

\begin{Property}\label{Property.Power-series-in-t}
	Let \(f(t) = \sum a_{n}t^{n}\) be a power series in \(t\). The coefficient \(a_{n}\) vanishes identically unless \(n \equiv 0 \pmod{q-1}\)
	and \(n \equiv 0 \text{ or } {-}1 \pmod{q}\).
\end{Property}

It is shared by all the basic modular forms.

\begin{Proposition}\label{Proposition.Forms-gi-satisfy-Property-5-1}
	The forms \(g_{i}\) (\(1 \leq i \leq r\)), the \(\alpha_{i}\) (\(i \in \mathds{N}\)), and the special Eisenstein series \(E_{q^{i}-1}\)
	(\(i \in \mathds{N}\)), regarded as power series in \(t\), satisfy Property 	\ref{Property.Power-series-in-t}, in particular the discriminant
	\(\Delta = g_{r}\).
\end{Proposition}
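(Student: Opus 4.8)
The plan is to separate the two congruence conditions in Property~\ref{Property.Power-series-in-t} and to reduce everything to the special Eisenstein series. Write \(\mathcal{N} = \{ n \in \mathds{N}_{0} \mid n \equiv 0 \pmod{q-1} \text{ and } n \equiv 0 \text{ or } {-}1 \pmod{q}\}\) and \(\mathcal{N}_{0} = \{ n \mid n \equiv 0 \pmod{q(q-1)}\} \subset \mathcal{N}\); call a power series in \(t\) \emph{of type \(\mathcal{N}\)} (resp.\ \emph{type \(\mathcal{N}_{0}\)}) if its nonzero coefficients occur only at exponents in \(\mathcal{N}\) (resp.\ \(\mathcal{N}_{0}\)). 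Three purely formal facts drive the argument. First, being of type \(\mathcal{N}\) is a \(C_{\infty}\)-linear condition, so it is preserved under sums. Second, if \(f\) is of type \(\mathcal{N}\) then \(f^{q} = \sum a_{n}^{q} t^{nq}\) is of type \(\mathcal{N}_{0}\), since \(nq \equiv 0 \pmod{q}\) while \(nq \equiv n \equiv 0 \pmod{q-1}\). Third, the product of a type-\(\mathcal{N}\) series with a type-\(\mathcal{N}_{0}\) series is again of type \(\mathcal{N}\): modulo \(q\) the exponents add as \(\{0,-1\} + \{0\} = \{0,-1\}\), and modulo \(q-1\) as \(\{0\} + \{0\} = \{0\}\).

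Granting that each \(E_{q^{i}-1}\) is of type \(\mathcal{N}\), I would obtain \(g_{k}\) and \(\alpha_{k}\) by induction on \(k\). Indeed, \eqref{Eq.Basic-coefficient-form-Eisenstein-series} writes \(g_{k}\) as a constant multiple of \(E_{q^{k}-1}\) plus a sum of products \(E_{q^{k-j}-1}\cdot g_{j}^{q^{k-j}}\); here \(g_{j}^{q^{k-j}}\) is a \(q\)-power of the (inductively type-\(\mathcal{N}\)) form \(g_{j}\), hence of type \(\mathcal{N}_{0}\), so each product, and thus \(g_{k}\), is of type \(\mathcal{N}\) by the three facts. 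Solving \eqref{Eq.Coefficients-of-exponential-function} recursively expresses \(\alpha_{k} = E_{q^{k}-1} + \sum_{i \ge 1} E_{q^{i}-1}\,\alpha_{k-i}^{q^{i}}\) (using \(\alpha_{0}=1\), \(E_{0}={-}1\)), and the same reasoning applies. So the whole statement reduces to the single claim that \(E_{q^{i}-1}\) is of type \(\mathcal{N}\).

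For the crux I would start from the expansion \eqref{Eq.Ek-omega-as-Ek-omega-prime-minus-Goss-polynomials}, \(E_{q^{i}-1}(\boldsymbol{\omega}) = E_{q^{i}-1}(\boldsymbol{\omega}') - \sum_{a \text{ monic}} G_{q^{i}-1,\Lambda'}(t_{a})\). The constant term contributes only to exponent \(0 \in \mathcal{N}\) (so nothing about the boundary form is needed). By Proposition~\ref{Proposition.Characterization-Goss-polynomials}(viii), \(G_{q^{i}-1}(X) = \sum_{0 \le l < i} \beta_{l} X^{q^{i}-q^{l}}\) with \(\beta_{l}=\beta_{l}^{\Lambda'}\) independent of \(t\); since each term \(t_{a}^{\,q^{i}-q^{l}}\) has order \(q^{(r-1)\deg a}(q^{i}-q^{l})\), which grows with \(\deg a\), only finitely many \((a,l)\) feed a fixed power of \(t\), and it suffices to show each \(t_{a}^{\,q^{i}-q^{l}}\) is of type \(\mathcal{N}\). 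The decisive input is that, by \ref{Subsection.n-th-variant-tn-of-uniformizer-of-t}, the reciprocal \(t_{a}^{-1} = \phi_{a}'(t^{-1})\) is an \(\mathds{F}\)-linear (additive) polynomial in \(t^{-1}\); its \(t\)-support therefore consists solely of exponents \({-}q^{m}\), which are \(\equiv 0\) or \({-}1 \pmod{q}\) and \(\equiv {-}1 \pmod{q-1}\). On the other hand \(t_{a}\) has all exponents \(\equiv 1 \pmod{q-1}\) (from the factor \(t^{q^{(r-1)\deg a}}\) and \(S_{a}\in C_{\infty}[t^{q-1}]\)), so for \(i \ge 1\) the Frobenius power \(t_{a}^{q^{i}}\) has exponents \(\equiv 0 \pmod{q}\) and \(\equiv 1 \pmod{q-1}\). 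Writing
\[
	t_{a}^{\,q^{i}-q^{l}} = t_{a}^{q^{i}} \cdot \big(t_{a}^{-1}\big)^{q^{l}} \qquad (0 \le l < i),
\]
the second factor has exponents \(\equiv {-}1 \pmod{q-1}\), and \(\equiv 0 \pmod{q}\) if \(l \ge 1\) resp.\ \(\in \{0,-1\} \pmod{q}\) if \(l=0\); adding exponents lands the product in \(\mathcal{N}\) in every case. This proves the reduced claim.

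The main obstacle is exactly this last step. A naive attack through \(t_{a} = (\Delta_{a}')^{-1} t^{q^{(r-1)d}}/S_{a}(t)\) (with \(d=\deg a\)) and a binomial expansion of \(S_{a}(t)^{-(q^{i}-q^{l})}\) fails: inverting \(S_{a}\) and taking non-\(q\)-power powers produces exponents at forbidden residues such as \({-}2 \pmod{q}\), and these do \emph{not} cancel termwise. The point is that it is \(t_{a}^{-1}\), not \(t_{a}\), that carries manifestly good support, via the additivity of \(\phi_{a}'\); factoring off the \(q\)-power part \(t_{a}^{q^{i}}\) then confines the mod-\(q\) residue to \(\{0,-1\}\). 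Everything else is bookkeeping modulo \(q\) and \(q-1\), and no convergence analysis enters, the identities being between formal power series.
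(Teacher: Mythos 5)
Your proposal is correct and takes essentially the same route as the paper: reduce to the special Eisenstein series via \eqref{Eq.Basic-coefficient-form-Eisenstein-series} and \eqref{Eq.Coefficients-of-exponential-function}, then feed the \(A\)-expansion \eqref{Eq.Ek-omega-as-Ek-omega-prime-minus-Goss-polynomials} and the support formula of Proposition \ref{Proposition.Characterization-Goss-polynomials}(viii) into a factorization of powers of \(t_{a}\) into a \(q\)-th power times a sparse factor. Your key device \(t_{a}^{\,q^{i}-q^{l}} = t_{a}^{q^{i}}\,(t_{a}^{-1})^{q^{l}}\), with the support of \(t_{a}^{-1} = \phi_{a}'(t^{-1})\) controlled by additivity, is the same as the paper's step writing \(t_{a}^{q^{i}-1} = \mathrm{const}\cdot\big(t^{(q^{i}-1)q^{(r-1)d}}S_{a}^{-q^{i}}\big)\,S_{a}\), since \(t_{a}^{-1}\) equals \(S_{a}(t)\) up to a constant times the monomial \(t^{-q^{(r-1)\deg a}}\).
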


\begin{proof}[Proof, see {\cite{Gekeler88} 6.10}]
	\begin{enumerate}[wide, label=(\roman*)]
		\item In view of the relations \eqref{Eq.Basic-coefficient-form-Eisenstein-series} and \eqref{Eq.Coefficients-of-exponential-function}
		between the three families of modular forms, which preserve the property, it suffices to treat the \(E_{q^{i}-1}\).
		\item The polynomial \(S_{a}(t)\) of \eqref{Eq.Polynomial-expression-for-Sn} satisfies \ref{Property.Power-series-in-t}.
		\item Let \(k \defeq q^{i}-1\) and \(a \in A\) be monic of degree \(d\). Then
		\[
			t_{a}^{k} = \mathrm{const.} \cdot (t^{kq^{(r-1)d}} S_{a}^{{-}q^{i}}) S_{a}
		\]
		satisfies \ref{Property.Power-series-in-t}, as the first factor is a \(q\)-th power.
		\item By (iii) and the special form of \(G_{q^{i}-1,\Lambda'}(X)\), whose support is \(q^{i}-1\), \(q^{i}-q\), \dots, \(q^{i} - q^{i-1}\)
		by \ref{Proposition.Characterization-Goss-polynomials}(viii), the power series \(G_{q^{i}-1,\Lambda'}(t_{a})\) satisfies
		\ref{Property.Power-series-in-t}.
		\item Hence by the expansion of \eqref{Eq.Ek-omega-as-Ek-omega-prime-minus-Goss-polynomials}, the result follows for \(E_{q^{i}-1}\).
	\end{enumerate}	
\end{proof}

\begin{Remarks}
	\begin{enumerate}[wide, label=(\roman*)]
		\item Property \ref{Property.Power-series-in-t} for \(\Delta\) could also be derived from the product formula 
		\eqref{Eq.Product-expression-discriminant-Delta-omega}, but in a more laborious fashion.
		\item The property also holds for general coefficient forms \(\prescript{}{a}\ell_{i}\) (\(a \in A\) arbitrary). Again this follows from
		the relation (see e.g. \cite{Gekeler88} 2.10) between the \(\prescript{}{a}\ell_{i}\) and the \(E_{q^{i}-1}\) that generalizes 
		\eqref{Eq.Basic-coefficient-form-Eisenstein-series}.
	\end{enumerate}	
\end{Remarks}

\subsection{} Consider the algebra \(\mathbf{M}_{0} = \mathbf{M}_{0}^{r} = \bigoplus_{k \geq 0} M_{k,0}^{r}\) of modular forms of type zero
for \(\Gamma = \GL(r,A)\). In the following, we often change the rank \(r\), and therefore label objects by the rank is necessary.

Restricting a modular form \(f \in M_{k,0}^{r}\) to the boundary \(\partial \Omega^{r} = \bigcupdot_{1 \leq j < r} \Omega^{j}\) of \(\Omega^{r}\),
there results a modular form \(f' \in M_{k,0}^{(r-1)}\) for \(\Gamma' = \GL(r-1,A)\)\footnote{There are no derivatives of modular forms in 
this paper.}. We thus get a map 
\begin{equation}
	\begin{split}
		\res_{r-1}^{r} \colon \mathbf{M}_{0}^{r} = C_{\infty}[g_{1}^{(r)}, \dots, g_{r}^{(r)}]	&\longrightarrow \mathbf{M}_{0}^{r-1} = C_{\infty}[g_{1}^{(r-1)}, \dots, g_{r-1}^{(r-1)}] \\
																					g_{i}^{(r)}	&\longmapsto g_{i}^{(r-1)} \qquad (1 \leq i < r) \\
																	g_{r}^{(r)} = \Delta^{(r)}	&\longmapsto 0	
	\end{split}
\end{equation}
with kernel the ideal of cusp forms. By Proposition \ref{Proposition.T-p-i-on-lattice}, the diagram
\begin{equation}\label{Eq.Isomorphism-of-Hecke-modules}
	\begin{tikzcd}
		\mathbf{M}_{0}^{r} \ar[r, "T_{\mathfrak{p},i}^{(r)}"] \ar[d, "\res_{r-1}^{r}"']	& \mathbf{M}_{0}^{r} \ar[d, "\res_{r-1}^{r}"]\\
		\mathbf{M}_{0}^{r-1} \ar[r, "T_{\mathfrak{p},i}^{(r-1)}"]						& \mathbf{M}_{0}^{r-1}
	\end{tikzcd}
\end{equation}
commutes for each \(i\), \(1 \leq i < r\). Put 
\[
	\mathbf{M}_{<q^{r}-1,0}^{r} \defeq \bigoplus_{k < q^{r}-1} M_{k,0}^{r};
\]
then \(\res_{r-1}^{r}\) defines an isomorphism of Hecke modules
\begin{equation}\label{Eq.Isomorphism-of-Hecke-modules}
	\mathbf{M}_{<q^{r-1},0}^{r} \overset{\cong}{\longrightarrow} \mathbf{M}_{<q^{r}-1,0}^{r-1}.
\end{equation}

\subsection{} Let \(0 \neq f\) be a modular form in (a) \(M_{k,0}\) resp. (b) \(S_{k,0}\), and assume that 
\begin{equation}
	\text{(a)} \dim M_{k,0} = 1 \quad \text{resp.} \quad \text{(b)} \dim S_{k,0} = 1.
\end{equation}
Then we know a priori that \(f\) is an eigenform for all the \(T_{\mathfrak{p},i}\). The condition holds if
\begin{enumerate}
	\item[(a)] \(k = j(q-1)\), \(1 \leq j \leq q\)	
\end{enumerate}
resp.
\begin{enumerate}
	\item[(b)] \(k = q^{r}-1 + j(q-1)\), \(0 \leq j \leq q\); 
\end{enumerate}
respective basis vectors are (a) \(g_{1}^{j}\) resp. (b) \(\Delta g_{1}^{j}\).

\begin{Proposition}\label{Proposition.Eigenforms-of-T-p-i}
	All the forms \(g_{1}^{j}\) (\(1 \leq j \leq q\)), \(g_{2}g_{1}^{j}\), \dots, \(g_{r-1}g_{1}^{j}\), \(\Delta g_{1}^{j}\) \((0 \leq j \leq q)\)
	are eigenforms for all the \(T_{\mathfrak{p},i}\) \((1 \leq i \leq r)\). The eigenvalue of \(T_{\mathfrak{p},k}\) on \(g_{i}g_{1}^{j}\) may
	be calculated in \(\mathbf{M}_{0}^{i}\), that is, as an eigenvalue on \(\Delta^{(i)}(g_{1}^{(i)})^{j}\) \((2 \leq i < r\), \(0 \leq j \leq q\),
	\(1 \leq k \leq i)\).	
\end{Proposition}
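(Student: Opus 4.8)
The plan is to induct on the rank \(r\), using the restriction map \(\res_{r-1}^{r}\) and its compatibility with the Hecke operators to push the eigenform property down to lower rank, where it is supplied either by the a priori one-dimensionality principle of the preceding subsection or by the inductive hypothesis. First I would dispose of the two extreme families: the forms \(g_{1}^{j}\) \((1\le j\le q)\) span the one-dimensional spaces \(M_{j(q-1),0}\), and the forms \(\Delta g_{1}^{j}\) \((0\le j\le q)\) span the one-dimensional cusp spaces \(S_{q^{r}-1+j(q-1),0}\); hence both are eigenforms for every \(T_{\mathfrak{p},i}\) by the a priori principle. This also settles the base case \(r=2\) of the induction outright, since there the middle range \(2\le i\le r-1\) is empty.

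For the inductive step, fix \(2\le i\le r-1\) and \(0\le j\le q\). The key observation is that the weight \(q^{i}-1+j(q-1)\) of \(g_{i}g_{1}^{j}\) is strictly below \(q^{r}-1\) (an elementary estimate, worst case \(i=r-1,\ j=q\), valid for \(r\ge 3\)), so \(g_{i}g_{1}^{j}\in\mathbf{M}_{<q^{r}-1,0}^{r}\). I would then exploit that \(\res_{r-1}^{r}\) is \emph{injective} on this subspace: its kernel is the cusp-form ideal \((\Delta)\), every nonzero element of which has weight \(\ge\deg\Delta=q^{r}-1\). Combined with the commuting square of \eqref{Eq.Isomorphism-of-Hecke-modules} (itself a consequence of Proposition \ref{Proposition.T-p-i-on-lattice}), injectivity transports eigenforms upward: if \(\res_{r-1}^{r}(g_{i}g_{1}^{j})\) is a \(T_{\mathfrak{p},k}^{(r-1)}\)-eigenform with eigenvalue \(\mu\), then \(\res_{r-1}^{r}\!\bigl(T_{\mathfrak{p},k}^{(r)}(g_{i}g_{1}^{j})-\mu\, g_{i}g_{1}^{j}\bigr)=0\), and since \(T_{\mathfrak{p},k}^{(r)}\) preserves the weight, the difference lies in \(\mathbf{M}_{<q^{r}-1,0}^{r}\) where \(\res_{r-1}^{r}\) is injective, forcing \(T_{\mathfrak{p},k}^{(r)}(g_{i}g_{1}^{j})=\mu\, g_{i}g_{1}^{j}\).

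It then remains to know that the image \(\res_{r-1}^{r}(g_{i}g_{1}^{j})=g_{i}^{(r-1)}g_{1}^{(r-1),j}\) is an eigenform in rank \(r-1\). If \(i<r-1\) this is again a middle form, an eigenform by the inductive hypothesis; if \(i=r-1\), then \(g_{r-1}^{(r-1)}=\Delta^{(r-1)}\), so the image is \(\Delta^{(r-1)}g_{1}^{(r-1),j}\), an eigenform by the cusp case of the a priori principle applied in rank \(r-1\). For the eigenvalue claim I would iterate this descent: the commuting square is available for \(T_{\mathfrak{p},k}\) precisely when \(k\) stays below the running rank, and descending from rank \(r\) down to rank \(i\) uses ranks \(r,r-1,\dots,i+1\), so the descent is legitimate exactly for \(1\le k\le i\); at the bottom the eigenvalue is read off from the action of \(T_{\mathfrak{p},k}^{(i)}\) on \(\Delta^{(i)}(g_{1}^{(i)})^{j}\). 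The top operator \(T_{\mathfrak{p},r}^{(r)}\) acts as scaling by \(\pi^{q^{i}-1+j(q-1)}\) and requires no reduction.

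I expect the load-bearing points—and hence the main obstacle—to be the two structural facts underlying the transport, rather than any computation: the injectivity of \(\res_{r-1}^{r}\) below weight \(q^{r}-1\) (equivalently, that the minimal weight of a nonzero cusp form is \(q^{r}-1\), which is clean here because \(\mathbf{M}_{0}^{r}=C_{\infty}[g_{1},\dots,g_{r}]\) is a polynomial ring and the cusp ideal is exactly \((\Delta)\)), together with the weight bound \(q^{i}-1+j(q-1)<q^{r}-1\) that places every relevant \(g_{i}g_{1}^{j}\) inside the domain of the isomorphism. Once these are in hand, the eigenform property is purely formal transport along \(\res_{r-1}^{r}\), and the bookkeeping of the admissible Hecke indices \(1\le k\le i\) dovetails with the range \(1\le i<r\) of Proposition \ref{Proposition.T-p-i-on-lattice}.
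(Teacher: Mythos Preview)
Your proposal is correct and follows essentially the same route as the paper: induction on \(r\), with the base case and the forms \(\Delta g_{1}^{j}\) handled by one-dimensionality of the ambient (cusp) space, the middle forms \(g_{i}g_{1}^{j}\) transported from rank \(r-1\) via the Hecke-equivariant isomorphism \(\res_{r-1}^{r}\) on weights below \(q^{r}-1\), and \(T_{\mathfrak{p},r}\) treated separately as the trivial scaling. Your explicit verification of the weight bound and of injectivity of \(\res_{r-1}^{r}\) (kernel \(=(\Delta)\)) makes the transport step more transparent than the paper's terse appeal to \eqref{Eq.Isomorphism-of-Hecke-modules}, but the argument is the same.
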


\begin{proof}
	We use \eqref{Eq.Isomorphism-of-Hecke-modules} and induction on \(r\).
	
	\fbox{\(r=2\)} The relevant spaces \(M_{k,0}^{(2)}\) and \(S_{k,0}^{(2)}\) are \(1\)-dimensional, and thus give eigenforms for 
	\(T_{\mathfrak{p}} \defeq T_{\mathfrak{p},1}^{(2)}\) and \(T_{\mathfrak{p},2}^{(2)}\).
	
	\fbox{\(r>2\)} Assume the assertion holds for \(r-1\). The stated forms of rank \(r\) and weight \(< q^{r}-1\) (i.e., those involving
	\(g_{i}^{(r)}\) with \(i < r\)) are eigenforms for \(T_{\mathfrak{p},i}^{(r)}\) (\(1 \leq i < r\)) by \eqref{Eq.Isomorphism-of-Hecke-modules} and
	the induction hypothesis, and are a priori eigenforms for \(T_{\mathfrak{p},r}^{(r)}\). The missing ones: \(\Delta^{(r)}(g_{1}^{(r)})^{j}\) lie
	in the \(1\)-dimensional spaces \(S_{q^{r}-1+j(q-1)}^{(r)}\) and are thus eigenforms, too.
\end{proof}

\begin{Remarks}\label{Remarks.Regarding-Eigenforms-of-T-p-i}
	\begin{enumerate}[wide, label=(\roman*)]
		\item The basic coefficient form \(g_{1}\) equals \((T^{q}-T)E_{q-1}\) by \eqref{Eq.Basic-coefficient-form-Eisenstein-series}, so
		\(g_{1}^{j} = (T^{q}-T)^{j} E_{j(q-1)}\) for \(1 \leq j \leq q\) by property \ref{Proposition.Characterization-Goss-polynomials}(iv)
		of Goss polynomials. Hence the eigenvalue of \(T_{\mathfrak{p},i}\) on \(g_{1}^{j}\) is \(\pi^{j(q-1)}\) for \(1 \leq i \leq r\)
		and \(1 \leq j \leq q\).
		\item The eigenvalue of \(T_{\mathfrak{p},i}\) (\(i=1,2\)) on \(g_{2}^{(r)}\) equals the eigenvalue of \(T_{\mathfrak{p},i}\) on
		\(g_{2}^{(2)} = \Delta^{(2)}\), which is \(\pi^{q-1}\) for \(i = 1\) (\cite{Gekeler88} 7.5) and \(\pi^{q^{2}-1}\) for \(i = 2\).
		\item For the moment we allow non-trivial types. For \(0 \leq j < q-1\), the form \(h^{j} = (h^{(r)})^{j}\) is a basis vector for
		\(M_{jw,j}\) (\(w \defeq (q^{r}-1)/(q-1)\)), and therefore an eigenform. For essentially trivial reasons (see Corollary 5.9), we get
		\(\pi^{(q^{i}-1)/(q-1)}\) as the eigenvalue of \(T_{\mathfrak{p},i}\) on \(h\). But already for \(h^{2}\), the corresponding 
		investigations are non-trivial and not yet accomplished. Note that in rank \(r=2\), the \((h^{(2)})^{j}\) have eigenvalue \(\pi^{j}\) under
		\(T_{\mathfrak{p},1}^{(2)}\) by \cite{Petrov13} Theorem 2.3 and 3.17.
	\end{enumerate}	
\end{Remarks}

Now we are in a position to state and prove the main result.

\begin{Theorem}\label{Theorem.Basic-coefficient-forms-are-Eigenforms-of-Hecke-operators}
	\begin{enumerate}[label=\(\mathrm{(\roman*)}\)]
		\item The basic coefficient forms \(g_{1}, g_{2}, \dots, g_{r}= \Delta^{(r)}\) are eigenforms for the Hecke operators 
		\(T_{\mathfrak{p},i}\) \((1 \leq i \leq r)\).
		\item For \(1 \leq i,j \leq r\), let \(\lambda_{i,j}\) be the eigenvalue of \(T_{\mathfrak{p},i}\) on \(g_{j}\). Then 
		\begin{equation}\label{Eq.Characterization-eigenvalues-T-p-i-on-g-j}
			\lambda_{i,j} = \pi^{q^{\min(i,j)} - 1},
		\end{equation}
		where \(\pi\) is the monic generator of the prime ideal \(\mathfrak{p}\).
	\end{enumerate}	
\end{Theorem}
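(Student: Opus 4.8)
The forms in (i) are eigenforms \emph{a priori} by Proposition~\ref{Proposition.Eigenforms-of-T-p-i} (take $j=0$, resp.\ $j=1$, in its list), so the entire content is the eigenvalue formula~\eqref{Eq.Characterization-eigenvalues-T-p-i-on-g-j}, which I would prove by induction on $r$ by splitting the index pairs $(i,j)$ into three groups. The column $j=1$ is uniform in $r$: since $g_1=(T^{q}-T)E_{q-1}$ (Remark~\ref{Remarks.Regarding-Eigenforms-of-T-p-i}(i)) and $E_{q-1}$ has eigenvalue $\pi^{q-1}$ under every $T_{\mathfrak p,i}$ by Example~\ref{Example.Action-of-T-p-i-on-Eisenstein-series-Ek}, one gets $\lambda_{i,1}=\pi^{q-1}=\pi^{q^{\min(i,1)}-1}$. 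The row $i=r$ is immediate: $T_{\mathfrak p,r}(\Lambda)=\{\pi^{-1}\Lambda\}$, so $T_{\mathfrak p,r}$ acts on a form of weight $k$ by $\pi^{k}$, giving $\lambda_{r,j}=\pi^{q^{j}-1}=\pi^{q^{\min(r,j)}-1}$. For $2\le j\le r-1$ and $1\le i\le r-1$ the fact that restriction commutes with the Hecke action (Proposition~\ref{Proposition.T-p-i-on-lattice}) together with $\res_{r-1}^{r}(g_{j}^{(r)})=g_{j}^{(r-1)}\ne 0$ forces $\lambda_{i,j}^{(r)}=\lambda_{i,j}^{(r-1)}$, so these follow from the inductive hypothesis.

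The one substantial case is $j=r$ with $1\le i<r$, i.e.\ the action of $T_{\mathfrak p,i}$ on $\Delta=g_{r}$; here restriction is useless since $\res\Delta=0$, and I would instead read off the eigenvalue from the \emph{leading} $t$-coefficient. By Theorem~\ref{Theorem.Product-expression-discriminant-Delta-omega} the $t$-expansion of $\Delta$ starts at $t^{q-1}$ with $a_{q-1}(\Delta)=-(\Delta')^{q}\ne 0$, so from $T_{\mathfrak p,i}\Delta=\lambda_{i,r}\Delta$ the $t^{q-1}$-coefficient of $T_{\mathfrak p,i}\Delta$ equals $-\lambda_{i,r}(\Delta')^{q}$. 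I would compute this same coefficient from Proposition~\ref{Proposition.t-expansion-of-f-in-M-k-l-to-power-series-expansion-of-T-p-i-f}: the type-$1$ terms have order $\ge(q-1)q^{di}>q-1$ and drop out, while among the type-$2$ terms $a_{n}(\widetilde\Lambda')G_{n}^{\widetilde\Lambda'}(\pi t)$ the index $n=q-1$ gives $G_{q-1}(X)=X^{q-1}$ (Proposition~\ref{Proposition.Characterization-Goss-polynomials}(iv)) and thus the contribution $\pi^{q-1}\sum_{\widetilde\Lambda'}a_{q-1}(\widetilde\Lambda')=\pi^{q-1}\,T_{\mathfrak p,i-1}^{(r-1)}\big({-}(\Delta')^{q}\big)$, the type-$2$ lattices running through $T_{\mathfrak p,i-1}^{(r-1)}(\Lambda')$. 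Using the $p$-power compatibility of Proposition~\ref{Proposition.Identity-for-C-infty-valued-functions-on-L-or-Lpm} and the inductive value $\lambda_{i-1,r-1}^{(r-1)}=\pi^{q^{i-1}-1}$, this equals $-\pi^{q-1}\pi^{q(q^{i-1}-1)}(\Delta')^{q}=-\pi^{q^{i}-1}(\Delta')^{q}$. Comparison yields $\lambda_{i,r}=\pi^{q^{i}-1}=\pi^{q^{\min(i,r)}-1}$, closing the induction (the base $r=2$ is the same computation, bottoming out at rank $1$ where $\Delta^{(1)}$ is constant and $T_{\mathfrak p,0}=\id$, and is in any case the known rank-two result).

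The step I expect to require the real care — and hence the main obstacle — is the claim that \emph{only} $n=q-1$ feeds the $t^{q-1}$-coefficient, that is, that every higher type-$2$ term with $a_{n}(\Delta)\ne 0$ has vanishing $X^{q-1}$-coefficient in $G_{n,W}$. I would settle this by a generating-function identity for Goss polynomials. Writing $Y=s_{1}^{W}(z)=1/e^{W}(z)$ and using that $e^{W}$ is $\mathds F$-linear, hence additive, one has
\[
 \sum_{n\ge 1}G_{n,W}(Y)\,u^{\,n-1}=\frac{1}{e^{W}(z-u)}=\frac{1}{e^{W}(z)-e^{W}(u)}=\frac{Y}{1-Y\,e^{W}(u)}=\sum_{j\ge 0}Y^{\,j+1}e^{W}(u)^{j},
\]
so the coefficient of $X^{q-1}$ in $G_{n,W}$ is exactly $[u^{\,n-1}]\,e^{W}(u)^{q-2}$.

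Since $e^{W}(u)=u+\alpha_{1}u^{q}+\alpha_{2}u^{q^{2}}+\cdots$, every exponent occurring in $e^{W}(u)^{q-2}$ is congruent mod $q$ to some $e_{0}$ with $0\le e_{0}\le q-2$; in particular no exponent is $\equiv -1\pmod q$, and the residue $\equiv -2\pmod q$ is attained only by the single monomial $u^{q-2}$. By Proposition~\ref{Proposition.Forms-gi-satisfy-Property-5-1} the nonzero $a_{n}(\Delta)$ occur only for $n\equiv 0$ or $-1\pmod q$, i.e.\ $n-1\equiv -1$ or $-2\pmod q$; hence $[u^{\,n-1}]\,e^{W}(u)^{q-2}=0$ unless $n-1=q-2$, that is $n=q-1$. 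Thus it is precisely the congruence Property~\ref{Property.Power-series-in-t} that annihilates all unwanted contributions, and the forbidding-looking sum of Proposition~\ref{Proposition.t-expansion-of-f-in-M-k-l-to-power-series-expansion-of-T-p-i-f} collapses to the clean leading term used above.
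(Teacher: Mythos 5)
Your proof is correct, and its architecture coincides with the paper's: part (i) from Proposition~\ref{Proposition.Eigenforms-of-T-p-i}; induction on \(r\) in which the row \(i=r\) is read off from \(T_{\mathfrak{p},r}(\Lambda)=\{\mathfrak{p}^{-1}\Lambda\}\), the pairs \(i,j<r\) follow from the restriction diagram furnished by Proposition~\ref{Proposition.T-p-i-on-lattice} (the paper routes the column \(j=1\) through this induction as well; your Eisenstein-series shortcut is its Remarks~\ref{Remarks.Regarding-Eigenforms-of-T-p-i}(i)); and \(\lambda_{i,r}\) is extracted by comparing the \(t^{q-1}\)-coefficient of \(T_{\mathfrak{p},i}\Delta\), computed from Proposition~\ref{Proposition.t-expansion-of-f-in-M-k-l-to-power-series-expansion-of-T-p-i-f} together with Proposition~\ref{Proposition.Identity-for-C-infty-valued-functions-on-L-or-Lpm}, against \(a_{q-1}(\Delta)=-(\Delta')^{q}\), giving the identical recursion \(\lambda_{i,r}=\pi^{q-1}\lambda_{i-1,r-1}^{q}\). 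The genuinely different ingredient is your proof of the Claim that only \(n=q-1\) feeds the \(t^{q-1}\)-coefficient. The paper argues from the catalogue of Proposition~\ref{Proposition.Characterization-Goss-polynomials}: for \(n\equiv 0\pmod q\), \(G_{n}\) is a \(q\)-th power by (v); for \(n\equiv -1\pmod q\), writing \(n+1=mq\) with \(m>1\), property (vi) gives \(X^{2}G_{n}'(X)=nG_{n+1}(X)=-G_{m}(X)^{q}\), and \(G_{m}\) has no linear term. You instead establish the closed formula \([X^{q-1}]G_{n,W}=[u^{n-1}]\,e^{W}(u)^{q-2}\) from the generating identity \(\sum_{n\geq 1}G_{n,W}(Y)u^{n-1}=Y/(1-Y\,e^{W}(u))\) --- legitimate, since \(W\) is a finite \(\mathds{F}\)-vector space, so \(e^{W}\) is additive with only \(q\)-power exponents --- and then do the mod-\(q\) bookkeeping on exponents; both routes consume the congruence Property~\ref{Property.Power-series-in-t}/Proposition~\ref{Proposition.Forms-gi-satisfy-Property-5-1} in exactly the same way. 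Your version is self-contained and yields a bit more: it computes every coefficient of \(G_{n,W}\) at once; its special case with exponent \(1\) in place of \(q-1\) (namely \([X]G_{n,W}=\delta_{n,1}\)) is precisely the \enquote{trivial fact} the paper invokes separately in Corollary~\ref{Corollary.Eigenvalue-of-h-is-pi-power}; and it is the natural tool for the question about \(a_{2}(T_{\mathfrak{p},i}(h^{2}))\) raised after that corollary. The paper's version is shorter given that (v) and (vi) are already on record. One small caveat: at \(r=2\) your leading-coefficient computation would invoke Propositions~\ref{Proposition.T-p-i-on-lattice} and~\ref{Proposition.t-expansion-of-f-in-M-k-l-to-power-series-expansion-of-T-p-i-f}, which the paper develops only for \(r\geq 3\); your stated fallback to the known rank-two value is exactly what the paper does (Remarks~\ref{Remarks.Regarding-Eigenforms-of-T-p-i}(ii), citing \cite{Gekeler88}), so no gap results.
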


\begin{proof}
	\begin{enumerate}[wide, label=(\roman*)]
		\item comes from \ref{Proposition.Eigenforms-of-T-p-i}.
		\item We use induction on \(r \geq 2\). For \fbox{\(r=2\)}, the values \(\lambda_{1,1}\) and \(\lambda_{1,2}\) are as wanted by 
		Remarks \ref{Remarks.Regarding-Eigenforms-of-T-p-i}(ii). Further, for arbitrary \(r \geq 2\)
		\begin{equation}\label{Eq.Determination-eigenvalue-lambda-r-j}
			\lambda_{r,j} = \pi^{q^{j}-1} \qquad \text{by \eqref{Eq.Association-lattice-finite-collection-T-p-i}}.
		\end{equation} 
		Now let \fbox{\(r > 2\)} and assume that \eqref{Eq.Characterization-eigenvalues-T-p-i-on-g-j} holds for \(r' = r-1\). By 
		\eqref{Eq.Isomorphism-of-Hecke-modules} and \eqref{Eq.Determination-eigenvalue-lambda-r-j}, it remains to determine the \(\lambda_{i,r}\)
		with \(1 \leq i < r\).
		
		By the product formula \eqref{Eq.Product-expression-discriminant-Delta-omega}, the \(t\)-expansion of \(\Delta(\boldsymbol{\omega})\)
		starts 
		\[
			\Delta(\boldsymbol{\omega}) = {-}\Delta'(\boldsymbol{\omega}')^{q}t^{q-1} + \sum_{n > q-1} a_{n}(\boldsymbol{\omega}')t^{n}(\boldsymbol{\omega}).
		\]
		(primed data refer to objects of rank \(r' = r-1\)). Therefore, we must calculate the \(t^{q-1}\)-term of \(T_{\mathfrak{p},i}(\Delta)\).
		Inspection of \eqref{Eq.t-expansion-of-f-in-M-k-l-to-power-series-expansion-of-T-p-i-f} reveals that terms of type 1 cannot contribute
		to the \(t^{q-1}\)-term.
		
		\textbf{Claim:} Let \(n > q-1\) be divisible by \(q-1\). Then (notation as in \eqref{Eq.t-expansion-of-f-in-M-k-l-to-power-series-expansion-of-T-p-i-f}) \(a_{n}(\widetilde{\Lambda}') G_{n}^{\widetilde{\Lambda}'}(X)\) has no \(X^{q-1}\)-term for \(\widetilde{\Lambda}'\) of type 2.
		
		\textit{Proof of the claim} (see \cite{Gekeler88}, proof of Corollary 7.5). We write \(G_{n}\) for the Goss polynomial 
		\(G_{n}^{\widetilde{\Lambda}'}\). Suppose \(a_{n}(\widetilde{\Lambda}') \neq 0\). Then, by Proposition 
		\ref{Proposition.Forms-gi-satisfy-Property-5-1}, \(n \equiv 0 \text{ or } {-}1 \pmod{q}\). If \(n \equiv 0 \pmod{q}\) then \(G_{n}\) is
		a \(q\)-th power (Proposition \ref{Proposition.Characterization-Goss-polynomials}(v)) and has no \(X^{q-1}\)-term. Otherwise, if 
		\(n \equiv {-}1 \pmod{q}\), \(n+1 = mq\) with \(m > 1\), we use Proposition \ref{Proposition.Characterization-Goss-polynomials}(vi) to
		find
		\[
			X^{2} \frac{\mathop{d}}{\mathop{dx}} G_{n}(X) = {-}G_{mq}(X) = {-}G_{m}(X)^{q}.
		\]
		As \(m > 1\), \(G_{m}\) has no \(X\)-term, so \(G_{n}\) has no \(X^{q-1}\)-term, and the claim is verified.
		
		By the claim, only the Goss polynomial \(G_{q-1}^{\widetilde{\Lambda}'}(\pi t)\) can contribute to the \(t^{q-1}\)-term of 
		\(T_{\mathfrak{p},i} \Delta(\boldsymbol{\omega})\). Now \(G_{q-1}^{\widetilde{\Lambda}'}(X) = X^{q-1}\) independently of 
		\(\widetilde{\Lambda}'\) (Proposition \ref{Proposition.Characterization-Goss-polynomials}(iv)). The \(\widetilde{\Lambda}'\) run
		through the super-lattices of \(\Lambda'\) with \(\widetilde{\Lambda}'/\Lambda'\cong \mathds{F}_{\mathfrak{p}}^{i-1}\), i.e., through
		\(T_{\mathfrak{p},i-1}^{(r-1)}(\widetilde{\Lambda}')\). We find for the coefficient of \(t^{q-1}\) in 
		\(T_{\mathfrak{p},i}\Delta(\boldsymbol{\omega})\):
		\begin{align*}
			a_{q-1}(T_{\mathfrak{p},i} \Delta)	&= \sum_{\widetilde{\Lambda}' \in T_{\mathfrak{p},i-1}^{(r-1)}} a_{q-1}(\widetilde{\Lambda}')\pi^{q-1} \\
												&= {-}\sum_{\widetilde{\Lambda}' \in T_{\mathfrak{p},i-1}^{(r-1)}} \Delta'(\widetilde{\Lambda}')^{q} \pi^{q-1} \\
												&= {-}T_{\mathfrak{p},i-1}^{(r-1)}( (\Delta^{(r-1)})^{q}) \pi^{q-1} \\
												&= {-}(T_{\mathfrak{p},i-1}^{(r-1)}(\Delta^{(r-1)}))^{q} \pi^{q-1} &&\text{(by Proposition \ref{Proposition.Identity-for-C-infty-valued-functions-on-L-or-Lpm})}	
		\end{align*}
		In case \(i=1\), the operator \(T_{\mathfrak{p},i-1}^{(r-1)} = T_{\mathfrak{p},0}^{(r-1)}\) is the identity operator. Thus, with
		\(\lambda_{0,j} = 1\), and applying the induction hypothesis,
		\[
			a_{q-1}(T_{\mathfrak{p},i}\Delta) = \pi^{q-1} \lambda_{i-1,r-1}^{q} a_{q-1}(\Delta),
		\]
		which finally gives
		\begin{equation}
			\lambda_{i,r} = \pi^{q-1} \lambda_{i-1,r-1}^{q} \qquad (r>2, 1 \leq i < r).
		\end{equation}
		Then \(\lambda_{i,r} = \pi^{q^{i}-1}\) is immediate.
	\end{enumerate}
\end{proof}

\begin{Corollary}\label{Corollary.Eigenvalue-of-h-is-pi-power}
	The eigenvalue of \(T_{\mathfrak{p},i}\) on the form \(h \in M_{(q^{r}-1)/(q-1),1}\) is \(\pi^{(q^{i}-1)/(q-1)}\).	
\end{Corollary}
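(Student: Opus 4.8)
The plan is to deduce the eigenvalue for $h$ from the already-established eigenvalue for $\Delta = g_r$, exploiting the normalization relation $h^{q-1} = (-1)^{r-1}\Delta$ from \eqref{Eq.Natural-normalization-of-h} together with the behaviour of Hecke operators under $p$-th powers recorded in Proposition \ref{Proposition.Identity-for-C-infty-valued-functions-on-L-or-Lpm}. The key observation is that $h$ lives in a one-dimensional space: by Remark \ref{Remarks.Regarding-Eigenforms-of-T-p-i}(iii), $h$ spans $M_{w,1}$ with $w = (q^r-1)/(q-1)$, so $h$ is automatically an eigenform for every $T_{\mathfrak{p},i}$, say with eigenvalue $\mu_i \in C_\infty^*$. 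It remains only to identify $\mu_i$.

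First I would apply $T_{\mathfrak{p},i}$ to both sides of $h^{q-1} = (-1)^{r-1}\Delta$. Since $q-1$ is prime to $p$ in general, I cannot directly pull the exponent through $T_{\mathfrak{p},i}$; instead the clean route is to raise the eigenvalue equation $T_{\mathfrak{p},i}h = \mu_i h$ to the $(q-1)$-st power termwise in the sense of eigenvalues: from $T_{\mathfrak{p},i}h = \mu_i h$ one gets, for the one-dimensional action, that $T_{\mathfrak{p},i}$ acts on any power $h^m$ that again spans a one-dimensional space. The cleanest argument uses Proposition \ref{Proposition.Identity-for-C-infty-valued-functions-on-L-or-Lpm}: writing $q-1 = \frac{q-1}{p^a}\cdot p^a$ if needed, but in fact it is simplest to note that the eigenvalue of $T_{\mathfrak{p},i}$ on $\Delta$ is $\lambda_{i,r} = \pi^{q^i-1}$ by Theorem \ref{Theorem.Basic-coefficient-forms-are-Eigenforms-of-Hecke-operators}, while $h^{q-1}$ is a scalar multiple of $\Delta$ and hence has the same eigenvalue. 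Therefore $\mu_i^{q-1} = \pi^{q^i-1} = (\pi^{(q^i-1)/(q-1)})^{q-1}$, which forces $\mu_i = \zeta\,\pi^{(q^i-1)/(q-1)}$ for some $(q-1)$-st root of unity $\zeta \in \mathds{F}^*$.

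The remaining task is to rule out the spurious root of unity $\zeta$ and pin down $\mu_i = \pi^{(q^i-1)/(q-1)}$ exactly; this I expect to be the main (though minor) obstacle. I would resolve it by examining the leading $t$-coefficient of $h$. By the product expansion following Theorem \ref{Theorem.Product-expression-discriminant-Delta-omega}, the $t$-expansion of $h$ begins $h(\boldsymbol{\omega}) = (h'(\boldsymbol{\omega}'))^q\, t(\boldsymbol{\omega}) + \cdots$, so the leading coefficient is $a_1(h) = (h')^q$, a nonzero rank-$(r-1)$ form. Tracking the $t^1$-term through the Hecke action exactly as in the proof of Theorem \ref{Theorem.Basic-coefficient-forms-are-Eigenforms-of-Hecke-operators} — only terms of type $2$ contribute, and the relevant Goss polynomial is $G_1^{\widetilde{\Lambda}'}(X) = X$ by Proposition \ref{Proposition.Characterization-Goss-polynomials}(iv), producing a factor $\pi^1$ from $G_1(\pi t)$ — yields
\[
	a_1(T_{\mathfrak{p},i}h) = \pi\,(T_{\mathfrak{p},i-1}^{(r-1)}(h^{(r-1)}))^q,
\]
using Proposition \ref{Proposition.Identity-for-C-infty-valued-functions-on-L-or-Lpm} to pass the $q$-th power outside the operator. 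By induction on $r$ the inner eigenvalue is $\pi^{(q^{i-1}-1)/(q-1)}$, giving $\mu_i = \pi \cdot \pi^{q(q^{i-1}-1)/(q-1)} = \pi^{(q^i-1)/(q-1)}$ with no extraneous constant. This forces $\zeta = 1$ and completes the identification; the base case $r=2$ is supplied by Petrov's computation cited in Remark \ref{Remarks.Regarding-Eigenforms-of-T-p-i}(iii), where $h^{(2)}$ has eigenvalue $\pi$ under $T_{\mathfrak{p},1}^{(2)}$, consistent with $(q^i-1)/(q-1)$ for $i=1$.
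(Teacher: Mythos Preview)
Your phase 2 is exactly the paper's proof: track the $t^1$-coefficient through Proposition \ref{Proposition.t-expansion-of-f-in-M-k-l-to-power-series-expansion-of-T-p-i-f}, observe that type-1 terms have order $\geq q^{di} > 1$, and that among the type-2 terms only $G_1^{\widetilde{\Lambda}'}(\pi t) = \pi t$ contributes (since $G_n(X)$ has no $X$-term for $n>1$, by the recursion \ref{Proposition.Characterization-Goss-polynomials}(vii)), yielding $\mu_{i,r} = \pi\,\mu_{i-1,r-1}^q$. The paper states this recursion and concludes; your write-up matches. One small omission: you should make explicit the fact that $G_n$ has no linear term for $n>1$ --- you say ``the relevant Goss polynomial is $G_1$'' but do not say why the others are irrelevant.

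However, your phase 1 contains a genuine gap and is in any case superfluous. From $T_{\mathfrak{p},i}h = \mu_i h$ you \emph{cannot} deduce that $T_{\mathfrak{p},i}(h^{q-1}) = \mu_i^{q-1}h^{q-1}$: the operator $T_{\mathfrak{p},i}$ is a sum over lattices, and $\sum_{\widetilde{\Lambda}} h(\widetilde{\Lambda})^{q-1}$ is not $\bigl(\sum_{\widetilde{\Lambda}} h(\widetilde{\Lambda})\bigr)^{q-1}$. Proposition \ref{Proposition.Identity-for-C-infty-valued-functions-on-L-or-Lpm} gives this only for $p$-th powers, not for $(q-1)$-st powers; and indeed Remark 5.10 in the paper points out that already for $h^2$ the eigenvalue question is open for $q>3$, which shows the multiplicativity you invoke is false in general. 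So the identity $\mu_i^{q-1}=\pi^{q^i-1}$ is unproven, and the ``pin down $\zeta$'' narrative is misleading: phase 2 is not a refinement of phase 1 but the entire argument. Simply delete phase 1.
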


\begin{proof}
	We let \(h^{(j)}\) be the corresponding form in \(M_{(q^{j}-1)/(q-1),1}^{j}\), where \(2 \leq j \leq r\), and put \(\mu_{i,j}\) for its eigenvalue
	under \(T_{\mathfrak{p},i}\) (\(0 \leq i \leq j\)). As
	\[
		h = h^{(r)} = (h^{(r-1)})^{q}t + \text{higher terms},
	\]		
	we must determine the first coefficient \(a_{1}(T_{\mathfrak{p},i}h)\) of \(T_{\mathfrak{p},i}h\). As a substitute for the Claim in the proof of
	Theorem \ref{Theorem.Basic-coefficient-forms-are-Eigenforms-of-Hecke-operators}, we use the trivial fact that Goss polynomials \(G_{n}(X)\) with
	\(n > 1\) have no \(X\)-term. Hence the same argument as for \(\Delta\) yields
	\begin{equation}
		\mu_{i,r} = \pi \mu_{i-1,r-1}^{q} \qquad (r > 2, 1 \leq i \leq r)
	\end{equation}
	with the consequence \(\mu_{i,r} = \pi^{(q^{i}-1)/(q-1)}\).
\end{proof}

\begin{Remark}
	What can we say about eigenvalues on powers of \(h\), say, on \(h^{2}\)? If \(q=2\) then the eigenvalues are those on \(h\), squared; if
	\(q=3\) then \(h^{2} = h^{q-1} = \pm \Delta\). Hence we may assume \(q>3\). We must evaluate the second term \(a_{2}(T_{\mathfrak{p},i}(h^{2}))\),
	for which only those \(G_{n}^{\widetilde{\Lambda}'}(X)\) in \eqref{Eq.t-expansion-of-f-in-M-k-l-to-power-series-expansion-of-T-p-i-f} may contribute
	that have a non-trivial \(X^{2}\)-term. Possible such \(n\) satisfy \(n \equiv 2 \pmod{q-1}\) and \(n \leq q^{(r-1)d}\), where 
	\(d = \deg \mathfrak{p}\). So it is difficult to get control of \(a_{2}(T_{\mathfrak{p},i}(h^{2}))\) in this case. However, for \(r=2\) see
	Remarks \ref{Remarks.Regarding-Eigenforms-of-T-p-i}(iii).	
\end{Remark}

\section{Growth of coefficients}\label{Section.Growth-coefficients}

We study in more detail the congruence and growth properties of the product expansion \eqref{Eq.Product-expression-discriminant-Delta-omega} 
of \(\Delta\) and of similar series, and borrow from ideas of \cite{Gekeler99}.

\subsection{} Let
\begin{equation}\label{Eq.Product-function}
	P(t) = \prod_{n \in A \text{ monic}} S_{n}(t) = \sum_{k \geq 0} p_{k}t^{k} = 1 + o(t^{q-1}) 
\end{equation}
be the \textbf{product function}; so
\begin{align}
	\Delta(\boldsymbol{\omega})	&= {-}( \Delta'(\boldsymbol{\omega}') )^{q} t^{q-1}(\boldsymbol{\omega}) P(t(\boldsymbol{\omega}))^{(q^{r}-1)(q-1)} \\
		h(\boldsymbol{\omega})	&= (h'(\boldsymbol{\omega}'))^{q} t(\boldsymbol{\omega}) P(t(\boldsymbol{\omega}))^{q^{r}-1}. \nonumber 
\end{align}
A priori, these are formal series in \(t\) convergent for sufficiently small values of \(t\); we will determine the convergence radii below.
The \(p_{k}\) are, like \(\Delta'\) and \(h'\), functions on \(\Omega' = \Omega^{r-1}\). In order to perform our calculations, we work on the
fundamental domain \(\mathbf{F} = \mathbf{F}^{r} \subset \Omega = \Omega^{r}\) with closure 
\(\mathbf{F} = \mathbf{F}^{r} \cup \mathbf{F}^{r-1} \cup \cdots \cup \mathbf{F}_{1}\), where \(\mathbf{F}' = \mathbf{F}^{r-1}\) is identified
with \(\{ (0,\omega_{2}, \dots, \omega_{r-1},1) \}\) and correspondingly \(\mathbf{F}^{1} = \{ (0,\dots,0,1) \}\).

\subsection{} We first study \(t\) as a function on \(\mathbf{F}\). As it vanishes nowhere, its logarithm 
\(\log t(\boldsymbol{\omega}) \defeq \log_{q} \lvert t(\boldsymbol{\omega}) \rvert\) is constant on fibers of the building map \(\lambda\) 
and interpolates linearly on simplices of \(W(\mathds{Q}) = \lambda(\mathbf{F})\), see 
\ref{Subsubsection.Invertible-function-on-open-affinoid-subspace} and \ref{Subsubsection.Invertible-function-on-pre-image-of-closed-simplex}. 
Therefore, we may for most questions even restrict to the subsets 
\begin{equation}\label{Eq.Subset-F-k-of-F}
	\mathbf{F}_{k} = \lambda^{-1}([L_{\mathbf{k}}])
\end{equation}
of \(\mathbf{F}\) (see \eqref{Eq.Distinguished-subdomain-of-F-defined-by-k}). Here \(\mathbf{k} \in \mathds{N}_{0}^{r}\) is a fundamental index,
which means \(k_{1} \geq k_{2} \geq \cdots \geq k_{r} = 0\). Throughout, \(r \geq 2\) is fixed. As usual, 
\(\boldsymbol{\omega} = (\omega_{1}, \boldsymbol{\omega}')\) with \(\boldsymbol{\omega}' \in \mathbf{F}'\) and \(\mathbf{k} = (k_{1}, \mathbf{k}')\).
For \(\boldsymbol{\omega} \in \mathbf{F}\), 
\[
	t(\boldsymbol{\omega}) = (e^{\Lambda_{\boldsymbol{\omega}'}}(\omega_{1}))^{-1} = \Big[ \omega_{1} \prod_{\mathbf{a} \in A^{r-1}} \Big(1 - \frac{\omega_{1}}{\mathbf{a} \boldsymbol{\omega}'} \Big)\Big]^{-1},
\]
\(\mathbf{a} = (a_{2}, \dots, a_{r})\), \(\mathbf{a} \boldsymbol{\omega}' = \sum_{2 \leq i \leq r} a_{i}\omega_{i}\). 

The absolute value of the factor \(1-\frac{\omega_{1}}{\mathbf{a}\boldsymbol{\omega}'}\) is 
\begin{align}
	\left\lvert 1 - \frac{\omega_{1}}{\mathbf{a} \boldsymbol{\omega}'} \right\rvert 	&= 1 \quad \text{if} \quad \lvert \mathbf{a} \boldsymbol{\omega}' \rvert \geq \lvert \omega_{1} \rvert \label{Eq.Absolute-value-of-1-minus-omega1-over-a-omega-prime} \\
																					&= \left\lvert \frac{\omega_{1}}{\mathbf{a} \boldsymbol{\omega}'} \right\rvert, \quad \text{if} \quad \lvert \mathbf{a}\boldsymbol{\omega}' \rvert \leq \lvert \omega_{1} \rvert \nonumber
\end{align}
(if \(\lvert \mathbf{a}\boldsymbol{\omega}' \rvert = \lvert \omega_{1} \rvert\), we use the orthogonality of entries of \(\boldsymbol{\omega}\)).
Therefore
\begin{align}
	\lvert t(\boldsymbol{\omega}) \rvert 	&= \lvert \omega_{1} \rvert^{-1} \sideset{}{^{\prime}} \prod_{\substack{\mathbf{a} \in A^{r-1} \\ \lvert \mathbf{a} \boldsymbol{\omega}' \rvert \leq \lvert \omega_{1} \rvert }} \left\lvert \frac{\mathbf{a}\boldsymbol{\omega}'}{\omega_{1}} \right\rvert \label{Eq.Absolute-value-of-t-omega}
	\intertext{and}
	\lvert t(\boldsymbol{\omega}) \rvert 	&\leq 1 \text{ on } \mathbf{F}, \qquad \text{where } \lvert t(\boldsymbol{\omega}) \rvert = 1 \Longleftrightarrow \boldsymbol{\omega} \in \mathbf{F}_{\mathbf{0}}, \mathbf{0} = (0,\dots, 0). 
\end{align}

\subsection{} Next we investigate the \textbf{division functions} \(d_{\mathbf{u}}\) for \(\mathbf{u} \in (K/A)^{r}\). Each such \(\mathbf{u}\) is 
represented 
\begin{equation}\label{Eq.Each-such-u-is-represented-with-monic-a}
	\mathbf{u} = (u_{1}, \dots, u_{r}) = n^{-1}(x_{1}, \dots, x_{r}) \quad \text{with monic } n \in A	
\end{equation}
of some degree \(d\), say, and elements \(x_{i}\) of \(A\) of degree \(d_{i} < d\) (\(1 \leq i \leq r\)). Whenever the syntax requires an element
of \(K\) instead of \(K/A\), we insert \(x_{i}/n\) for \(u_{i}\). In particular, \(\lvert u_{i} \rvert \defeq \lvert x_{i}/n \rvert\) and
\(\lvert \mathbf{u}\boldsymbol{\omega} \rvert = \lvert n^{-1} \sum x_{i}\omega_{i} \rvert\).

The division function is defined by
\begin{equation}
	d_{\mathbf{u}}(\boldsymbol{\omega}) = e^{\Lambda_{\boldsymbol{\omega}}}(\mathbf{u}\boldsymbol{\omega}) = \mathbf{u}\boldsymbol{\omega} \sideset{}{^{\prime}} \prod_{\mathbf{a} \in A^{r}} \left(1 - \frac{\mathbf{u}\boldsymbol{\omega}}{\mathbf{a} \boldsymbol{\omega}} \right).
\end{equation}
The \(d_{\mathbf{u}}\) with \(n \mathbf{u} = 0\) are the \(n\)-division points of the Drinfeld module 
\(\phi^{\boldsymbol{\omega}} = \phi^{\Lambda_{\boldsymbol{\omega}}}\), and are meromorphic modular forms of weight \({-}1\) for the congruence
subgroup \(\Gamma(n)\) of \(\Gamma\) (see, e.g., \cite{Gekeler22-2} 2.4). As in \eqref{Eq.Absolute-value-of-1-minus-omega1-over-a-omega-prime},
\begin{align}
	\left\lvert 1 - \frac{\mathbf{u}\boldsymbol{\omega}}{\mathbf{a}\boldsymbol{\omega}} \right\rvert &= 1, \text{ if } \lvert \mathbf{a} \boldsymbol{\omega} \rvert \geq \lvert \mathbf{u} \boldsymbol{\omega} \rvert \\
													&= \left\lvert \frac{\mathbf{u}\boldsymbol{\omega}}{\mathbf{a}\boldsymbol{\omega}} \right\rvert, \text{ if } \lvert \mathbf{a} \boldsymbol{\omega} \rvert \leq \lvert \mathbf{u} \boldsymbol{\omega} \rvert, \nonumber	
\end{align}
and so for \(\boldsymbol{\omega} \in \mathbf{F}\):
\begin{equation}\label{Eq.Product-expansion-d-n-omega}
	\lvert d_{\mathbf{u}}(\boldsymbol{\omega}) \rvert = \lvert \mathbf{u} \boldsymbol{\omega}\rvert \sideset{}{^{\prime}} \prod_{\substack{\mathbf{a} \in A^{r} \\ \lvert \mathbf{a} \boldsymbol{\omega}\rvert \leq \lvert \mathbf{u} \boldsymbol{\omega} \rvert}} \left\lvert \frac{\mathbf{u}\boldsymbol{\omega}}{\mathbf{a}\boldsymbol{\omega}}\right\rvert, 
\end{equation}
a finite product. Now fix \(\mathbf{k}\) as in \eqref{Eq.Subset-F-k-of-F} and assume \(\boldsymbol{\omega} \in \mathbf{F}_{\mathbf{k}}\). We remind
the reader that \(\lvert d_{\mathbf{u}}(\boldsymbol{\omega}) \rvert\) depends only on \(\mathbf{k}\) but not on the choice of 
\(\boldsymbol{\omega} \in \mathbf{F}_{\mathbf{k}}\).

For a denominator \(n \in A\) of degree \(d \geq 1\), we let 
\begin{equation}
	\mathbf{u}_{0}(n) \defeq n^{-1}(T^{d-1},0,\dots,0).
\end{equation}

\begin{Lemma}\label{Lemma.Bound-for-absolute-values}
	The absolute values \(\lvert d_{\mathbf{u}}(\boldsymbol{\omega}) \rvert\) with \(\mathbf{u} \in (K/A)^{r}\) are bounded on \(\mathbf{F}_{\mathbf{k}}\)
	by \(\lvert d_{\mathbf{u}_{0}(n)}(\boldsymbol{\omega}) \rvert\) (n any non-constant monic in \(A\)). The latter all agree with
	\(\lvert d_{\mathbf{u}_{0}}(\boldsymbol{\omega}) \rvert\), where \(\mathbf{u}_{0} = \mathbf{u}_{0}(T) = (T^{-1},0,\dots,0)\).	
\end{Lemma}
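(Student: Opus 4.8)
The plan is to exploit the fact that, by the product expansion \eqref{Eq.Product-expansion-d-n-omega}, the absolute value \(\lvert d_{\mathbf{u}}(\boldsymbol{\omega})\rvert\) on \(\mathbf{F}_{\mathbf{k}}\) depends on \(\mathbf{u}\) only through the single quantity \(\rho \defeq \lvert \mathbf{u}\boldsymbol{\omega}\rvert\). Indeed, substituting \(\rho\) for \(\lvert \mathbf{u}\boldsymbol{\omega}\rvert\) in \eqref{Eq.Product-expansion-d-n-omega} gives
\[
	\lvert d_{\mathbf{u}}(\boldsymbol{\omega})\rvert = \Phi(\rho) \defeq \rho \sideset{}{^{\prime}}\prod_{\substack{\mathbf{a}\in A^{r}\\ \lvert \mathbf{a}\boldsymbol{\omega}\rvert \leq \rho}} \frac{\rho}{\lvert \mathbf{a}\boldsymbol{\omega}\rvert},
\]
a finite product whose value is governed by the fixed lattice \(\Lambda_{\boldsymbol{\omega}}\) and by \(\rho\) alone (the factors with \(\lvert \mathbf{a}\boldsymbol{\omega}\rvert = \rho\) equal \(1\)). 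The lemma thus splits into two independent assertions: that \(\Phi\) is a nondecreasing function of \(\rho\), and that the maximal value of \(\rho\) attainable for \(\mathbf{u}\in (K/A)^{r}\) equals \(q^{k_{1}-1}\), realized simultaneously by every \(\mathbf{u}_{0}(n)\).

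For the first, I would note that every factor \(\rho/\lvert \mathbf{a}\boldsymbol{\omega}\rvert\) is \(\geq 1\) and that, as \(\rho\) increases, both the factors grow and new ones enter. Quantitatively, writing \(N(\rho)\) for the number of nonzero \(\mathbf{a}\) with \(\lvert \mathbf{a}\boldsymbol{\omega}\rvert \leq \rho\), one has \(\Phi(\rho) = \rho^{1+N(\rho)}\big/\sideset{}{^{\prime}}\prod_{\lvert \mathbf{a}\boldsymbol{\omega}\rvert\leq\rho}\lvert \mathbf{a}\boldsymbol{\omega}\rvert\). Comparing \(\Phi(\rho_{2})\) with \(\Phi(\rho_{1})\) for \(\rho_{1}<\rho_{2}\), the newly entering factors all satisfy \(\lvert \mathbf{a}\boldsymbol{\omega}\rvert \leq \rho_{2}\), so their product is at most \(\rho_{2}^{N(\rho_{2})-N(\rho_{1})}\); the remaining quotient is then \((\rho_{2}/\rho_{1})^{1+N(\rho_{1})}>1\), whence \(\Phi(\rho_{2})\geq \Phi(\rho_{1})\).

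For the second, I would use the orthogonality of the entries of \(\boldsymbol{\omega}\in\mathbf{F}_{\mathbf{k}}\) and \(\log_{q}\lvert \omega_{i}\rvert = k_{i}\) from \eqref{Eq.Distinguished-subdomain-of-F-defined-by-k}, together with the reduced representation \eqref{Eq.Each-such-u-is-represented-with-monic-a}. For that representation \(\lvert u_{i}\rvert = q^{d_{i}-d}\leq q^{-1}\), with equality exactly when \(d_{i}=d-1\), and orthogonality gives \(\rho = \lvert \mathbf{u}\boldsymbol{\omega}\rvert = \max_{i}\lvert u_{i}\rvert q^{k_{i}}\). Since \(k_{1}=\max_{i}k_{i}\), this yields \(\rho \leq q^{-1}q^{k_{1}} = q^{k_{1}-1}\). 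For \(\mathbf{u} = \mathbf{u}_{0}(n) = n^{-1}(T^{d-1},0,\dots,0)\) with \(n\) monic of degree \(d\geq 1\), the first coordinate has \(\lvert T^{d-1}/n\rvert = q^{-1}\) and the rest vanish, so \(\rho = q^{k_{1}-1}\) independently of \(n\); hence all the \(\lvert d_{\mathbf{u}_{0}(n)}(\boldsymbol{\omega})\rvert = \Phi(q^{k_{1}-1})\) agree with the value for \(\mathbf{u}_{0}=\mathbf{u}_{0}(T)\). Combining with monotonicity, \(\lvert d_{\mathbf{u}}(\boldsymbol{\omega})\rvert = \Phi(\rho)\leq \Phi(q^{k_{1}-1}) = \lvert d_{\mathbf{u}_{0}(n)}(\boldsymbol{\omega})\rvert\) for every \(\mathbf{u}\), which is the claim.

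The main obstacle to guard against is the temptation to identify \(\lvert d_{\mathbf{u}}\rvert\) with the lattice distance \(\operatorname{dist}(\mathbf{u}\boldsymbol{\omega},\Lambda_{\boldsymbol{\omega}})=\lvert\mathbf{u}\boldsymbol{\omega}\rvert\): this is false in general, since \(\Phi(\rho)\) is strictly larger as soon as lattice vectors of smaller norm are present, so the argument must retain the full product \eqref{Eq.Product-expansion-d-n-omega} rather than its leading term. A secondary point of care is the treatment of the boundary terms with \(\lvert \mathbf{a}\boldsymbol{\omega}\rvert = \rho\), whose factors equal \(1\) precisely by the orthogonality already invoked in deriving \eqref{Eq.Product-expansion-d-n-omega}.
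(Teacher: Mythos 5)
Your proof is correct and follows essentially the same route as the paper's: both use the product formula \eqref{Eq.Product-expansion-d-n-omega} to see that \(\lvert d_{\mathbf{u}}(\boldsymbol{\omega})\rvert\) depends only on \(\lvert \mathbf{u}\boldsymbol{\omega}\rvert\) and increases monotonically with it, and then observe that \(\lvert \mathbf{u}\boldsymbol{\omega}\rvert\) is maximal (equal to \(q^{k_{1}-1}\)) exactly when \(\lvert u_{1}\rvert = q^{-1}\), which holds for every \(\mathbf{u}_{0}(n)\). The only difference is one of detail: you explicitly verify the monotonicity of \(\Phi\) and the bound \(\rho \leq q^{k_{1}-1}\), which the paper asserts without proof.
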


\begin{proof}
	For \(\mathbf{u} = (u_{1}, \dots, u_{r})\) as in \eqref{Eq.Each-such-u-is-represented-with-monic-a}, the \(u_{i}\) satisfy 
	\(\lvert u_{i} \rvert \leq q^{-1}\). By \eqref{Eq.Product-expansion-d-n-omega}, \(\lvert d_{\mathbf{u}}(\boldsymbol{\omega}) \rvert\) depends
	only on \(\lvert \mathbf{u}\boldsymbol{\omega} \rvert\) and increases monotonically with \(\lvert \mathbf{u} \boldsymbol{\omega} \rvert\).
	In view of the nature of \(\boldsymbol{\omega}\), \(\lvert \mathbf{u}\boldsymbol{\omega} \rvert\) is maximal for \(\mathbf{u}\) with
	\(\lvert u_{1} \rvert = q^{-1}\), which holds for all the \(\mathbf{u}_{0}(n)\).
\end{proof}

\subsection{} We are interested in the sizes of coefficients of the polynomial
\begin{equation}\label{Eq.Polynomial-S-n-as-product}\stepcounter{subsubsection}%
	S_{n}(X) = \sideset{}{^{\prime}} \prod_{\substack{\mathbf{u} \in (K/A)^{r} \\ n\mathbf{u} = 0}} (1 - d_{\mathbf{u}}(\boldsymbol{\omega})X),
\end{equation}
where \(n\) is a fixed monic element of \(A\) of degree \(d \geq 1\) (and still \(\boldsymbol{\omega} \in \mathbf{F}_{\mathbf{k}}\)). As it may be
written as 
\begin{equation}\stepcounter{subsubsection}%
	S_{n}(X) = \Delta(\boldsymbol{\omega})^{-1} X^{q^{rd}} \phi_{n}^{\boldsymbol{\omega}}(X^{-1})
\end{equation}
with the Drinfeld module \(\phi^{\boldsymbol{\omega}}\), the support of \(S_{n}(X)\) is contained in 
\(\{0, q^{rd} - q^{rd-1}, q^{rd} - q^{rd-2}, \dots, q^{rd} - 1\}\). The maximal \(\lvert d_{\mathbf{u}}(\boldsymbol{\omega}) \rvert\) that appears in
\eqref{Eq.Polynomial-S-n-as-product} is with \(\mathbf{u} = \mathbf{u}_{0}(\mathbf{n})\). Let 
\subsubsection{} \(j(\mathbf{k})\) be the maximal \(j \in \{1,2,\dots,r\}\) such that \(k_{1} = k_{j}\). Then 
\(\mathbf{u} = n^{-1}(x_{1}, \dots, x_{j(\mathbf{k})}, x_{j(\mathbf{k})+1}, \dots, x_{r})\) gives rise to the maximal value 
\(\lvert d_{\mathbf{u}}(\boldsymbol{\omega}) \rvert = \lvert d_{\mathbf{u}_{0}(n)}(\boldsymbol{\omega}) \rvert\) if and only if at least one 
of \(x_{1}, \dots, x_{j(\mathbf{k})}\) has maximal degree \(d-1\). Hence:
\subsubsection{}\label{Subsubsection.Number-of-n-with-maximal-absolute-value-d-n-omega}%
The number of such \(\mathbf{u}\) with maximal \(\lvert d_{\mathbf{u}}(\boldsymbol{\omega}) \rvert\) is 
\[
	(q^{jd} - q^{j(d-1)})q^{(r-j)d} = q^{rd} - q^{rd-j} \quad \text{with} \quad j=j(\mathbf{k}).
\]

\subsection{} As we want to get control of the product function \(P\) of \eqref{Eq.Product-function}, we apply the preceding to the situation of
rank \(r' = r-1\), where \(\mathbf{k}\) is replaced with \(\mathbf{k}' = (k_{2}, \dots, k_{r-1},0)\), \(\boldsymbol{\omega}\) with
\(\boldsymbol{\omega}'\), and now
\begin{equation}
	S_{n}(X) = (\Delta'(\boldsymbol{\omega}'))^{-1} X^{q^{(r-1)d}} \phi_{n}^{\boldsymbol{\omega}'}(X^{-1}) = \sideset{}{^{\prime}} \prod_{\substack{\mathbf{u} \in (K/A)^{r-1} \\ n \mathbf{u} = 0}} (1 - d_{\mathbf{u}}(\boldsymbol{\omega}')X)
\end{equation}
as in \eqref{Eq.Polynomial-expression-for-Sn}.

\begin{Lemma}\label{Lemma.Inequality-for-t-omega}
	For each \(\boldsymbol{\omega} \in \mathbf{F}_{\mathbf{k}}\) and \(\mathbf{u} \in (K/A)^{r-1}\), the inequality
	\begin{equation}
		\lvert t(\boldsymbol{\omega}) d_{\mathbf{u}}(\boldsymbol{\omega}') \rvert \leq q^{-1}
	\end{equation}	
	holds.
\end{Lemma}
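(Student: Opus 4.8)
The plan is to write $t(\boldsymbol{\omega})\,d_{\mathbf{u}}(\boldsymbol{\omega}')$ as a single ratio of exponential values and then exploit the cancellation between the two associated (finite) products. Since $t(\boldsymbol{\omega}) = (e^{\Lambda'}(\omega_{1}))^{-1}$ and $d_{\mathbf{u}}(\boldsymbol{\omega}') = e^{\Lambda'}(\mathbf{u}\boldsymbol{\omega}')$ with $\Lambda' = \Lambda_{\boldsymbol{\omega}'}$, we have
\[
	t(\boldsymbol{\omega})\,d_{\mathbf{u}}(\boldsymbol{\omega}') = \frac{e^{\Lambda'}(\mathbf{u}\boldsymbol{\omega}')}{e^{\Lambda'}(\omega_{1})}.
\]
First I would record the size of the argument $\mathbf{u}\boldsymbol{\omega}'$. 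Writing $\mathbf{u} = n^{-1}(x_{2}, \dots, x_{r})$ as in \eqref{Eq.Each-such-u-is-represented-with-monic-a}, each coordinate satisfies $\lvert u_{i}\rvert \leq q^{-1}$; since the entries of $\boldsymbol{\omega}'$ are orthogonal (being a subfamily of the orthogonal $\boldsymbol{\omega} \in \mathbf{F}_{\mathbf{k}}$) and lie in $K \subset K_{\infty}$, this gives
\[
	\lvert \mathbf{u}\boldsymbol{\omega}' \rvert = \max_{2 \leq i \leq r} \lvert u_{i}\omega_{i}\rvert \leq q^{-1}\lvert \omega_{2}\rvert = q^{k_{2}-1} \leq q^{k_{1}} = \lvert \omega_{1}\rvert,
\]
the last inequality because $\mathbf{k}$ is a fundamental index, so $k_{1} \geq k_{2}$.

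Next I would take absolute values and insert the explicit product expansions \eqref{Eq.Absolute-value-of-t-omega} for $\lvert t(\boldsymbol{\omega})\rvert$ and \eqref{Eq.Product-expansion-d-n-omega} (in rank $r-1$) for $\lvert d_{\mathbf{u}}(\boldsymbol{\omega}')\rvert$, where $\lambda = \mathbf{a}\boldsymbol{\omega}'$ runs through the nonzero elements of $\Lambda'$ as $\mathbf{a} \in A^{r-1}$. This yields
\[
	\lvert t(\boldsymbol{\omega})\,d_{\mathbf{u}}(\boldsymbol{\omega}') \rvert = \frac{\lvert \mathbf{u}\boldsymbol{\omega}'\rvert}{\lvert \omega_{1}\rvert} \sideset{}{^{\prime}}\prod_{\lvert \lambda\rvert \leq \lvert \omega_{1}\rvert} \frac{\lvert \lambda\rvert}{\lvert \omega_{1}\rvert} \sideset{}{^{\prime}}\prod_{\lvert \lambda\rvert \leq \lvert \mathbf{u}\boldsymbol{\omega}'\rvert} \frac{\lvert \mathbf{u}\boldsymbol{\omega}'\rvert}{\lvert \lambda\rvert}.
\]
The crucial step is to pair up the two products. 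Because $\lvert \mathbf{u}\boldsymbol{\omega}'\rvert \leq \lvert \omega_{1}\rvert$, the index set of the second product is contained in that of the first; splitting the first product accordingly and multiplying matching factors, every $\lambda$ with $\lvert \lambda\rvert \leq \lvert \mathbf{u}\boldsymbol{\omega}'\rvert$ contributes $\frac{\lvert \lambda\rvert}{\lvert \omega_{1}\rvert}\cdot\frac{\lvert \mathbf{u}\boldsymbol{\omega}'\rvert}{\lvert \lambda\rvert} = \frac{\lvert \mathbf{u}\boldsymbol{\omega}'\rvert}{\lvert \omega_{1}\rvert} \leq 1$, while each remaining $\lambda$ with $\lvert \mathbf{u}\boldsymbol{\omega}'\rvert < \lvert \lambda\rvert \leq \lvert \omega_{1}\rvert$ contributes $\frac{\lvert \lambda\rvert}{\lvert \omega_{1}\rvert} \leq 1$. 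Hence all factors beyond the leading one are $\leq 1$, and
\[
	\lvert t(\boldsymbol{\omega})\,d_{\mathbf{u}}(\boldsymbol{\omega}') \rvert \leq \frac{\lvert \mathbf{u}\boldsymbol{\omega}'\rvert}{\lvert \omega_{1}\rvert} \leq q^{k_{2}-1-k_{1}} \leq q^{-1}.
\]

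The content of the argument is entirely this cancellation: individually $\lvert d_{\mathbf{u}}(\boldsymbol{\omega}')\rvert$ can be large, its product carrying factors $\geq 1$ coming from the small lattice vectors of $\Lambda'$, but each such large factor is exactly compensated by the small factor of $\lvert t(\boldsymbol{\omega})\rvert$ indexed by the same $\lambda$, the net effect being governed by $\lvert \mathbf{u}\boldsymbol{\omega}'\rvert/\lvert \omega_{1}\rvert$. I expect the only point requiring care to be the bookkeeping of the two index sets, together with the borderline terms $\lvert \lambda\rvert = \lvert \omega_{1}\rvert$ or $\lvert \lambda\rvert = \lvert \mathbf{u}\boldsymbol{\omega}'\rvert$, where orthogonality guarantees the corresponding factor is exactly $1$ (as already used in deriving \eqref{Eq.Absolute-value-of-t-omega}); there is no genuine analytic difficulty. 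Alternatively, one may first invoke Lemma \ref{Lemma.Bound-for-absolute-values} in rank $r-1$ to reduce to the maximal case $\mathbf{u}_{0} = (T^{-1}, 0, \dots, 0)$, but the direct estimate above is self-contained.
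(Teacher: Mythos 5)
Your proof is correct and follows essentially the same route as the paper: the paper's own proof consists precisely of the instruction to combine \eqref{Eq.Absolute-value-of-t-omega} with \eqref{Eq.Product-expansion-d-n-omega} and to use \(\lvert \omega_{1}\rvert \geq \lvert \omega_{2}\rvert > \lvert \mathbf{u}\boldsymbol{\omega}'\rvert\), which is exactly the factor-pairing cancellation you carry out explicitly. The only difference is that you spell out the bookkeeping the paper leaves implicit, including the quantitative bound \(\lvert \mathbf{u}\boldsymbol{\omega}'\rvert \leq q^{-1}\lvert \omega_{2}\rvert\) coming from \(\lvert u_{i}\rvert \leq q^{-1}\) and orthogonality.
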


\begin{proof}
	This follows from combining the equalities \eqref{Eq.Absolute-value-of-t-omega} and \eqref{Eq.Product-expansion-d-n-omega} and taking into account
	that \(\lvert \omega_{1} \rvert \geq \lvert \omega_{2} \rvert > \lvert \mathbf{u}\boldsymbol{\omega}' \rvert\) for each \(\mathbf{u}\).
\end{proof}

\subsection{} Next we choose an element \(\gamma \in C_{\infty}\) that realizes the spectral norm \(\lVert d_{\mathbf{u}_{0}} \rVert_{\mathbf{k}'}\)
of \(d_{\mathbf{u}_{0}}\) on \(\mathbf{F}_{\mathbf{k'}}'\), that is 
\begin{equation}
	\lvert \gamma \rvert = \lVert d_{\mathbf{u}_{0}} \rVert_{\mathbf{k}'} = \lvert d_{\mathbf{u}_{0}}(\boldsymbol{\omega}') \rvert.
\end{equation} 
Performing the coordinate change 
\begin{align}
	d_{\mathbf{u}}^{*} = \gamma^{-1} d_{\mathbf{u}}, t^{*} = \gamma t, p_{k}^{*} = \gamma^{{-}k}p_{k}, S_{n}^{*}(X) = \prod (1 - d_{\mathbf{u}}^{*} X), \\ P(t) = \prod_{n \in A \text{ monic}} S_{n}(t) = \sum_{k \geq 0} p_{k}t^{k} = \sum_{k \geq 0} p_{k}^{*}(t^{*})^{k}, \nonumber 
\end{align}
the inequality \(\lvert p_{k} \rvert \leq \lvert \gamma \rvert^{k}\) that comes from Lemma \ref{Lemma.Bound-for-absolute-values} and 
\eqref{Eq.Polynomial-S-n-as-product} is transformed to \(\lvert p_{k}^{*} \rvert \leq 1\), where
\begin{equation}
	\lvert p_{k}^{*} \rvert = 1 \Longleftrightarrow \lvert p_{k} \rvert = \lvert \gamma \rvert^{k}.
\end{equation}

\subsection{} Let \(\overline{(~)} \colon O_{C_{\infty}} \to \overline{\mathds{F}}\) be the reduction map, where 
\(\overline{\mathds{F}} = O_{C_{\infty}}/\mathfrak{m}_{C_{\infty}}\) is the algebraic closure of \(\mathds{F} = \mathds{F}_{q}\). As 
\(\overline{\mathds{F}}\) lifts to a subfield of \(C_{\infty}\), each \(z \in O_{C_{\infty}}\) has a unique presentation \(z = \overline{z} + x\)
with \(\overline{z} \in \overline{\mathds{F}}\) and \(x \in \mathfrak{m}_{C_{\infty}}\). By some abuse of notation, we write \(\overline{p}_{k}\)
for \( (\overline{p_{k}^{*}})\) and \(\overline{S}_{n}(X) \in \overline{\mathds{F}}[X]\) for \(\overline{S_{n}^{*}(X)}\). By 
\ref{Subsubsection.Number-of-n-with-maximal-absolute-value-d-n-omega}, the degree of \(\overline{S}_{n}(X)\) is 
\begin{equation}
	\deg \overline{S}_{n}(X) = q^{(r-1)d} - q^{(r-1)d-(j-1)},
\end{equation}
where \(j = j(\mathbf{k}')\), i.e., \(k_{2} = \cdots = k_{j} > k_{j+1}\).

\subsection{}\label{Subsection.Replacing-n-for-other-monic-polynomial}%
Replace the monic \(n\) by some monic \(m\) of the same degree \(d\). It is easily seen that the leading coefficients of the various
\(d_{\mathbf{u}}(\boldsymbol{\omega}')\) remain unchanged, and so \(\overline{S}_{n} = \overline{S}_{m}\). Putting 
\begin{align}\stepcounter{subsubsection}%
	P_{d}(X)			&\defeq \prod_{\substack{n \text{ monic} \\ \text{of degree }d}} S_{n}(X)	&&(d>0, P_{0} = 1),
	\intertext{we find}
	\overline{P}_{d}(X)	&= \prod_{\substack{n \text{ monic}\\ \deg n = d}} \overline{S}_{n}(X) = \overline{S}_{n}(X)^{q^{d}}		&&\text{with one fixed \(n\) of degree \(d\)}. \nonumber 
	\intertext{Hence \(\overline{P}_{d}(X)\) has shape}
	\overline{P}_{d}(X)	&= 1 + *X^{q^{rd}-q^{rd-1}} + \cdots + * X^{q^{rd}-q^{rd-(j-1)}}	 \nonumber
\end{align}
with coefficients \(* \in \overline{\mathds{F}}\) depending on \(\boldsymbol{\omega}'\), the last one non-zero. Finally,
\begin{equation}\stepcounter{subsubsection}%
	\overline{P}(X) = \prod_{d \geq 1} \overline{P}_{d}(X) = \prod_{d \geq 1}(1 + *X^{q^{rd}-q^{rd-1}} + \cdots + *X^{q^{rd}-q^{rd-(j-1)}}).
\end{equation}
Its \(k\)-th coefficient \(\overline{p}_{k}\) is non-zero at least if \(k\) is a sum of different numbers of the form
\begin{equation}\stepcounter{subsubsection}%
	Q_{d} \defeq q^{rd} - q^{rd-(j-1)} \qquad (j = j(\mathbf{k}'), d \in \mathds{N} \text{ variable}).
\end{equation}
This results from the following easily checked \enquote{unmixedness} property, which rules out cancellations.
\subsubsection{} Any representation of \(k \in \mathds{N}\) as a sum 
\[
	k = R_{d_{1}} + R_{d_{2}} + \cdots+ R_{d_{s}}
\]
with \(d_{1} > d_{2} > \cdots > d_{s}\) and \(R_{d_{t}} \in \{ q^{rd_{t}} - q^{rd_{t}-1}, \dots, q^{rd_{t}} - q^{rd_{t} - (j-1)}\}\) for 
\(1 \leq t \leq s\) is unique, in the sense that \(s\), the \(d_{t}\) and the \(R_{d_{t}}\) are uniquely determined by \(k\).

We thus get the following result.

\begin{Theorem}\label{Theorem.Fundamental-index}
	Let \(\mathbf{k} = (k_{1}, \dots, k_{r}) \in \mathds{N}_{0}^{r}\) be a fundamental index, that is \(k_{1} \geq k_{2} \geq \cdots \geq k_{r} = 0\),
	\(j = j(\mathbf{k}')\) such that \(k_{2} = \cdots = k_{j} > k_{j+1}\) and \(\mathbf{F}_{\mathbf{k}}\) (resp. \(\mathbf{F}'_{\mathbf{k}'}\))
	the corresponding subdomain of \(\mathbf{F}\) (resp. \(\mathbf{F}'\)). Let further \(c(\mathbf{k}')\) be the spectral norm of \(d_{\mathbf{u}_{0}}\)
	on \(\mathbf{F}_{\mathbf{k}'}\) (i.e., \(c(\mathbf{k}') = \lVert d_{\mathbf{u}_{0}} \rVert_{\mathbf{k}'} = \lvert d_{(T^{-1},0,\dots,0)}(\boldsymbol{\omega}') \rvert\)).
	\begin{enumerate}[label=\(\mathrm{(\roman*)}\)]
		\item As functions on \(\mathbf{F}'_{\mathbf{k}'}\), the coefficients \(p_{k}\) of the product function \(P(t) = \sum_{k \geq 0} p_{k}t^{k}\)
		satisfy
		\begin{equation}\label{Eq.Absolute-value-pk}
			\lvert p_{k} \rvert \leq c(\mathbf{k}')^{k},
		\end{equation}
		with equality at least if \(k\) is a sum of different numbers of the form \(Q_{d} = q^{rd} - q^{rd-(j-1)}\).
		\item The coefficients \(a_{k}\) of 
		\[
			\left(\frac{{-}1}{\Delta^{\prime q} t^{q-1}}\right)\Delta = P(t)^{(q^{r}-1)(q-1)}
		\] 
		and \(b_{k}\) of
		\[ 
			\left(\frac{1}{h^{\prime q}t}\right)h = P(t)^{q^{r}-1}
		\] 
		satisfy the same estimates 
		\begin{equation}\label{Eq.Bound-for-absolute-value-of-an}
			\lvert a_{k} \rvert \leq c(\mathbf{k}')^{k}
		\end{equation}
		and
		\begin{equation}\label{Eq.Bound-for-absolute-value-of-bn}
			\lvert b_{k} \rvert \leq c(\mathbf{k}')^{k}
		\end{equation}
		on \(\mathbf{F}'_{\mathbf{k}'}\).
	\end{enumerate}	
\end{Theorem}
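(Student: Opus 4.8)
The plan is to deduce everything from the $*$-normalized coordinates set up just before the statement, in which all the relevant quantities become $O_{C_{\infty}}$-integral. Recall that $\gamma \in C_{\infty}$ was chosen with $\lvert \gamma \rvert = c(\mathbf{k}') = \lvert d_{\mathbf{u}_{0}}(\boldsymbol{\omega}') \rvert$, a quantity that is constant on $\mathbf{F}'_{\mathbf{k}'}$, and that by Lemma \ref{Lemma.Bound-for-absolute-values}, applied in rank $r-1$, every $d_{\mathbf{u}}^{*} = \gamma^{-1} d_{\mathbf{u}}$ satisfies $\lvert d_{\mathbf{u}}^{*}(\boldsymbol{\omega}') \rvert \leq 1$, so that $d_{\mathbf{u}}^{*}(\boldsymbol{\omega}') \in O_{C_{\infty}}$ throughout $\mathbf{F}'_{\mathbf{k}'}$.

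First I would settle the upper bounds, which cover the inequality in (i) and all of (ii) simultaneously. Using the product expression \eqref{Eq.Polynomial-S-n-as-product} in rank $r-1$, each $S_{n}^{*}(X) = \prod (1 - d_{\mathbf{u}}^{*} X)$ has all its coefficients in $O_{C_{\infty}}$, being elementary symmetric functions of the integral $d_{\mathbf{u}}^{*}$. Since $S_{n}^{*}$ differs from $1$ only in degrees $\geq q^{(r-1)d}-q^{(r-1)d-1}$ with $d = \deg n$, for each fixed $k$ only finitely many factors contribute to the coefficient of $(t^{*})^{k}$ in $P^{*} = \prod_{n} S_{n}^{*}$; hence $P^{*}(t^{*}) = \sum p_{k}^{*}(t^{*})^{k}$ has $p_{k}^{*} \in O_{C_{\infty}}$, i.e.\ $\lvert p_{k}^{*} \rvert \leq 1$, which is exactly $\lvert p_{k} \rvert \leq c(\mathbf{k}')^{k}$. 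Because $O_{C_{\infty}}$ is a ring, every power of $P^{*}$ again has coefficients in $O_{C_{\infty}}$; applying this to $(P^{*})^{(q^{r}-1)(q-1)}$ and $(P^{*})^{q^{r}-1}$ gives $\lvert a_{k}^{*} \rvert \leq 1$ and $\lvert b_{k}^{*} \rvert \leq 1$, which translate back to the bounds $\lvert a_{k} \rvert \leq c(\mathbf{k}')^{k}$ and $\lvert b_{k} \rvert \leq c(\mathbf{k}')^{k}$ of part (ii).

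It remains to prove the equality assertion in (i), for which I would pass to the reduction $O_{C_{\infty}} \to \overline{\mathds{F}}$ and show $\overline{p}_{k} \neq 0$ whenever $k$ is a sum of distinct $Q_{d}$. By \ref{Subsection.Replacing-n-for-other-monic-polynomial} we have $\overline{P}(X) = \prod_{d \geq 1} \overline{P}_{d}(X)$ with each $\overline{P}_{d}(X) = 1 + {*}\,X^{q^{rd}-q^{rd-1}} + \cdots + {*}\,X^{Q_{d}}$ having nonzero top coefficient. A contribution to $\overline{p}_{k}$ arises from choosing, for each $d$, one monomial of $\overline{P}_{d}$ so that the chosen exponents sum to $k$. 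When $k = \sum_{d \in D} Q_{d}$ for a finite set $D$ of distinct degrees, the unmixedness property recorded in \ref{Subsection.Replacing-n-for-other-monic-polynomial} forces this representation to be unique: the only admissible choice is the top monomial $X^{Q_{d}}$ from each factor with $d \in D$ and the constant $1$ from all others. Hence $\overline{p}_{k}$ is the (nonzero) product of the corresponding top coefficients, so $\lvert p_{k}^{*} \rvert = 1$, i.e.\ $\lvert p_{k} \rvert = c(\mathbf{k}')^{k}$.

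The main obstacle is precisely this no-cancellation step in the equality case: a priori, monomials coming from different factors $\overline{P}_{d}$ could land on the same exponent $k$ and cancel in characteristic $p$, which would collapse $\overline{p}_{k}$ to zero and destroy the equality. This is exactly what the unmixedness property is designed to exclude, by pinning down $s$, the degrees $d_{t}$, and the summands $R_{d_{t}}$ uniquely from $k$. Everything else—integrality of the $p_{k}^{*}$ and its propagation under taking powers—is formal once the $*$-coordinates are in place, so I expect the bulk of the genuine work to reside in verifying the unmixedness (combinatorial separation of the exponent ranges $\{q^{rd}-q^{rd-1},\dots,Q_{d}\}$ across distinct $d$).
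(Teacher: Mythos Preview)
Your proposal is correct and follows essentially the same approach as the paper: the paper also deduces the inequality $\lvert p_{k}\rvert \leq c(\mathbf{k}')^{k}$ from the equivalent $\lvert p_{k}^{*}\rvert \leq 1$ obtained via the $*$-normalization, derives the equality case from the unmixedness property in \ref{Subsection.Replacing-n-for-other-monic-polynomial} (which prevents cancellation in $\overline{P}$), and declares the bounds in (ii) ``immediate'' for exactly the reason you spell out (closure of $O_{C_{\infty}}$ under products, hence under taking powers of $P^{*}$). Your write-up is in fact more explicit than the paper's terse proof, but the logical content is the same.
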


\begin{proof}
	(i) has been shown, as \(\lvert p_{k} \rvert \leq c(\mathbf{k}')^{k}\) is equivalent with \(\lvert p_{k}^{*} \rvert \leq 1\). The 
	inequalities in (ii) are then immediate.
\end{proof}

\begin{Corollary}\label{Corollary.Uniform-convergence-of-infinite-product-P-on-F}
	The infinite product (resp. sum) for \(P\) in \eqref{Eq.Product-function} converges uniformly on \(\mathbf{F}\), and the same is true
	for the corresponding expansions of \(h\) and of \(\Delta\).	
\end{Corollary}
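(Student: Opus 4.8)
The plan is to reduce everything to a single uniform bound on the general term, valid pointwise on all of \(\mathbf{F}\), and then to invoke the elementary nonarchimedean criterion that a series \(\sum_k u_k\) (resp. a product \(\prod_n(1+v_n)\)) of functions converges uniformly as soon as \(\lVert u_k\rVert_{\mathbf{F}} \to 0\) (resp. \(\lVert v_n\rVert_{\mathbf{F}} \to 0\)). Write \(\boldsymbol{\omega} = (\omega_1, \boldsymbol{\omega}')\) and set \(c(\boldsymbol{\omega}') \defeq \lvert d_{\mathbf{u}_0}(\boldsymbol{\omega}')\rvert\) with \(\mathbf{u}_0 = (T^{-1},0,\dots,0)\). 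The key observation is that the two estimates I need hold pointwise everywhere on \(\mathbf{F}\), not merely on the vertex fibres \(\mathbf{F}_{\mathbf{k}}\): the inequality \(\lvert p_k(\boldsymbol{\omega}')\rvert \leq c(\boldsymbol{\omega}')^k\) of Theorem \ref{Theorem.Fundamental-index}(i) comes only from the ultrametric estimate for the elementary symmetric functions of the \(d_{\mathbf{u}}(\boldsymbol{\omega}')\) together with Lemma \ref{Lemma.Bound-for-absolute-values}, both pointwise statements on \(\mathbf{F}'\); and Lemma \ref{Lemma.Inequality-for-t-omega}, applied to \(\mathbf{u} = \mathbf{u}_0\), gives \(c(\boldsymbol{\omega}')\lvert t(\boldsymbol{\omega})\rvert \leq q^{-1}\) for every \(\boldsymbol{\omega} \in \mathbf{F}\). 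Multiplying,
\[
	\lvert p_k(\boldsymbol{\omega}')\, t(\boldsymbol{\omega})^k \rvert \leq \big(c(\boldsymbol{\omega}')\lvert t(\boldsymbol{\omega})\rvert\big)^k \leq q^{-k} \qquad (\boldsymbol{\omega} \in \mathbf{F}).
\]

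Since this bound is independent of \(\boldsymbol{\omega}\) and tends to \(0\), the sum \(\sum_k p_k t^k = P(t(\boldsymbol{\omega}))\) of \eqref{Eq.Product-function} converges uniformly on \(\mathbf{F}\). The coefficients \(a_k\) and \(b_k\) of the \(t\)-expansions of \(\Delta\) and \(h\) satisfy the same bounds \(\lvert a_k\rvert, \lvert b_k\rvert \leq c(\boldsymbol{\omega}')^k\) by Theorem \ref{Theorem.Fundamental-index}(ii) (again pointwise, for the same reason), so the identical computation shows that these converge uniformly as well; the bounded prefactors \({-}\Delta'^{\,q}t^{q-1}\) and \(h'^{\,q}t\) appearing in \eqref{Eq.Product-expression-discriminant-Delta-omega} do not affect uniform convergence.

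For the product \(\prod_{n}S_n(t)\) I would instead show that the tail factors tend to \(1\) uniformly. By the shape of \(S_n\) in \eqref{Eq.Polynomial-expression-for-Sn}, the polynomial \(S_n(X)-1\) is supported in degrees \(\geq q^{(r-1)d-1}(q-1)\), where \(d = \deg n\), and its coefficient in degree \(j\) is again bounded by \(c(\boldsymbol{\omega}')^{j}\). Hence
\[
	\lvert S_n(t(\boldsymbol{\omega}))-1\rvert \leq \big(c(\boldsymbol{\omega}')\lvert t(\boldsymbol{\omega})\rvert\big)^{q^{(r-1)d-1}(q-1)} \leq q^{-q^{(r-1)d-1}(q-1)}
\]
uniformly on \(\mathbf{F}\), and the right-hand side tends to \(0\) as \(d \to \infty\). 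As there are only finitely many monic \(n\) of each degree, the partial products over \(\deg n \leq D\) form a uniform Cauchy sequence, so \(\prod_n S_n(t)\) converges uniformly to \(P(t)\); since the powers \(S_n(t)^{(q^r-1)(q-1)}\) and \(S_n(t)^{q^r-1}\) have coefficients obeying the same support and size constraints, the product expansions of \(\Delta\) and \(h\) are handled identically.

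The argument is short because the hard analytic content is already packaged in Theorem \ref{Theorem.Fundamental-index} and Lemmas \ref{Lemma.Bound-for-absolute-values} and \ref{Lemma.Inequality-for-t-omega}. The one point that genuinely requires care — and which I expect to be the main obstacle — is the behaviour on the top stratum \(\mathbf{F}_{\mathbf{0}}\), where \(\lvert t\rvert = 1\): there the naive term-size estimate based on \(\lvert t\rvert \leq 1\) does not decay, and it is essential to use the strict inequality \(c(\boldsymbol{\omega}')\lvert t(\boldsymbol{\omega})\rvert \leq q^{-1} < 1\) of Lemma \ref{Lemma.Inequality-for-t-omega}, which forces both the terms \(p_k t^k\) and the tail factors \(S_n(t)-1\) to shrink. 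One must also verify, as indicated in the first paragraph, that the bounds of Theorem \ref{Theorem.Fundamental-index} are genuinely pointwise on \(\mathbf{F}'\) rather than tied to the integral vertices \([L_{\mathbf{k}'}]\).
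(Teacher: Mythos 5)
Your proof is correct, but it globalizes the estimates by a genuinely different mechanism than the paper. The paper keeps Theorem \ref{Theorem.Fundamental-index} and Lemma \ref{Lemma.Inequality-for-t-omega} exactly as stated, i.e.\ on the vertex fibres \(\mathbf{F}_{\mathbf{k}} = \lambda^{-1}([L_{\mathbf{k}}])\) where \(c(\mathbf{k}')\) is a constant: there \(\lvert t(\boldsymbol{\omega})\rvert \leq q^{-1}c(\mathbf{k}')^{-1}\) places \(\mathbf{F}_{\mathbf{k}}\) inside the domain of convergence (whose radius is pinned down using also the equality statement of Theorem \ref{Theorem.Fundamental-index}(i)), and the passage from the vertex fibres to all of \(\mathbf{F}\) is then done building-theoretically, via \ref{Subsubsection.Invertible-function-on-pre-image-of-closed-simplex}: both \(\log_{q}\lvert t\rvert\) and \(\log_{q}c(\mathbf{k}')\) interpolate linearly on simplices of \(\mathcal{W}\) (resp.\ \(\mathcal{W}'\)), so the inequality checked at vertices propagates across each simplex. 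You instead upgrade Lemma \ref{Lemma.Bound-for-absolute-values}, Lemma \ref{Lemma.Inequality-for-t-omega} and the inequality \eqref{Eq.Absolute-value-pk} to pointwise statements on all of \(\mathbf{F}\), with \(c(\boldsymbol{\omega}') \defeq \lvert d_{\mathbf{u}_{0}}(\boldsymbol{\omega}')\rvert\), and conclude directly from the stratum-independent geometric bound \(\lvert p_{k}t^{k}\rvert \leq q^{-k}\). That upgrade is legitimate: the product formulas \eqref{Eq.Absolute-value-of-t-omega} and \eqref{Eq.Product-expansion-d-n-omega} are derived in the paper for arbitrary \(\boldsymbol{\omega} \in \mathbf{F}\), and the proofs of the two lemmas and of \eqref{Eq.Absolute-value-pk} use only these formulas together with ultrametric estimates on elementary symmetric functions of the \(d_{\mathbf{u}}\); the fibres \(\mathbf{F}_{\mathbf{k}}\) enter the printed statements only to make \(c(\mathbf{k}')\) a number, and the \(\gamma\)-normalization of 6.7 only serves the equality (lower-bound) part, which you correctly never need. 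What your route buys: no building argument, a cleaner uniform bound \(q^{-k}\) valid on all of \(\mathbf{F}\) at once, and an explicit uniform Cauchy treatment of the infinite product via the tail factors \(S_{n}(t)-1\) --- a point the paper passes over with \enquote{it suffices to consider the case of the product function \(P\)}. What it costs: you must reopen the proofs of the quoted results, since as stated they are tied to the integral vertices --- precisely the verification you flag at the end, and it does go through. Your remark about the top stratum \(\mathbf{F}_{\mathbf{0}}\), where \(\lvert t\rvert = 1\) and only the combined bound \(c(\boldsymbol{\omega}')\lvert t(\boldsymbol{\omega})\rvert \leq q^{-1}\) produces decay, identifies exactly why the naive estimate \(\lvert t\rvert \leq 1\) is insufficient and why Lemma \ref{Lemma.Inequality-for-t-omega} is the real engine of the corollary.
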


\begin{proof}
	It suffices to consider the case of the product function \(P\). For fixed \(\mathbf{k}'\), the radius of convergence of \eqref{Eq.Product-function}
	is \(c(\mathbf{k}')^{-1}\) by \eqref{Eq.Absolute-value-pk} and the fact that equality holds for infinitely many \(k\). By Lemma 
	\ref{Lemma.Inequality-for-t-omega}, for each fundamental index \(\mathbf{k} = (k_{1}, \mathbf{k}')\) and 
	\(\boldsymbol{\omega} \in \mathbf{F}_{\mathbf{k}}\), \(\lvert t(\boldsymbol{\omega}) \rvert \leq q^{-1}c(\mathbf{k}')^{-1}\), and so 
	\(\mathbf{F}_{\mathbf{k}}\) belongs to the domain of convergence.
	
	Hence \eqref{Eq.Product-function} converges uniformly on \(\mathbf{F}_{\mathbf{k}}\). The uniform convergence on \(\mathbf{F}\) follows, as
	both the \(q\)-logarithms \(\log t(\boldsymbol{\omega}) = \log_{q} \lvert t(\boldsymbol{\omega}) \rvert\) and \(\log_{q} c(\mathbf{k}')\)
	interpolate linearly on simplices of the simplicial complex \(\mathcal{W} = \mathcal{W}^{r}\) (resp. \(\mathcal{W}' = \mathcal{W}^{r-1}\)).
	
	(Recall that \(\mathcal{W} \subset \mathcal{BT}\) is the subcomplex with \(\mathbf{F} = \lambda^{-1}(\mathcal{W})\).)
\end{proof}

\begin{Remarks}
	\begin{enumerate}[wide, label=(\roman*)]
		\item Arguing as in \ref{Subsection.Replacing-n-for-other-monic-polynomial}, but investing a bit more labor, we could figure out 
		infinitely many \(k\) such that equality holds in \eqref{Eq.Bound-for-absolute-value-of-an} (resp. in 
		\eqref{Eq.Bound-for-absolute-value-of-bn}).
		\item Thanks to the Cauchy integral formula, a complex power series has always the largest possible radius of convergence. In our
		situation, the discriminant function (or its root \(h\)) is defined and holomorphic for arbitrary \(\boldsymbol{\omega} \in \Omega\), in
		particular for \(\boldsymbol{\omega}\) with large \(\lvert t(\boldsymbol{\omega}) \rvert\), for which the convergence of 
		\eqref{Eq.Product-function} (or of its \((q^{r}-1)(q-1)\)-th power) fails. This marks an important difference of complex and non-archimedean
		analysis, as there is no substitute of Cauchy's formula in the latter. Fortunately, by Corollary 
		\ref{Corollary.Uniform-convergence-of-infinite-product-P-on-F}, at least the fundamental domain \(\mathbf{F}\) is covered by the domain of
		convergence of the expansions of \(\Delta\) and \(h\).
	\end{enumerate}	
\end{Remarks}

\subsection{} We conclude with analogous but more simple observations on the convergence of the formula 
\eqref{Eq.Ek-omega-as-Ek-omega-prime-minus-Goss-polynomials} for Eisenstein series. It is (with our usual notations; as before, we assume that
\(\boldsymbol{\omega} \in \mathbf{F}_{\mathbf{k}}\)):
\begin{equation}\label{Eq.E-k-omega-is-E-k-omega-prime-minus-Goss-polynomials}
	E_{k}(\boldsymbol{\omega}) = E_{k}(\boldsymbol{\omega}') - \sum_{a \in A \text{ monic}} G_{k, \Lambda'}(t_{a}(\boldsymbol{\omega})),
\end{equation}
where 
\[
	t_{a}(\boldsymbol{\omega}) = t(a\omega_{1}, \boldsymbol{\omega}') = (\Delta_{a}'(\boldsymbol{\omega}'))^{-1} t^{q^{(r-1)\deg a}}/S_{a}(t(\boldsymbol{\omega})).
\]
As on \(\mathbf{F}_{\mathbf{k}}\), \(\lvert t(\boldsymbol{\omega}) \rvert \leq q^{-1} c(\mathbf{k}')^{-1}\), the quantity 
\(S_{a}(t(\boldsymbol{\omega}))\) is non-zero and the ingredients of \eqref{Eq.E-k-omega-is-E-k-omega-prime-minus-Goss-polynomials} 
are well-defined on \(\mathbf{F}_{\mathbf{k}}\). For its convergence, we could elaborate estimates on the coefficients of \(G_{k,\Lambda'}\); 
it is however easier to refer to the derivation of \eqref{Eq.Ek-omega-as-Ek-omega-prime-minus-Goss-polynomials}, where the term 
\(G_{k,\Lambda'}(t_{a}(\boldsymbol{\omega}))\) was identified as
\[
	G_{k, \Lambda'}(t_{a}(\boldsymbol{\omega})) = \sum_{\mathbf{b} \in A^{r-1}} \frac{1}{(a \omega_{1} + \mathbf{b}\boldsymbol{\omega}')^{k}}.
\] 
As \(\lvert a\omega_{1} + \mathbf{b}\boldsymbol{\omega}' \rvert \geq \lvert a\omega_{1}\rvert\), we get
\[
	\lvert G_{k,\Lambda'}(t_{a}(\boldsymbol{\omega})) \rvert \leq \lvert a \rvert^{-k} \lvert \omega_{1} \rvert^{-k} = q^{{-}(d+k_{1})k} \qquad (d \defeq \deg a)
\]
on \(\mathbf{F}_{\mathbf{k}}\). This guarantees the uniform convergence of \eqref{Eq.E-k-omega-is-E-k-omega-prime-minus-Goss-polynomials} on
\(\mathbf{F}_{\mathbf{k}}\), as well as the uniform convergence on \(\mathbf{F}\).

\begin{Corollary}\label{Corollary.Convergence-of-all-t-expansions-of-modular-forms-on-fundamental-domain}
	The \(t\)-expansions of all modular forms \(f \in \mathbf{M}^{r}\) converge uniformly on the fundamental domain \(\mathbf{F}\).	
\end{Corollary}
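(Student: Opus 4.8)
The plan is to reduce the statement to the generators of the algebra $\mathbf{M}^{r}$ and then to show that uniform convergence of $t$-expansions on $\mathbf{F}$ is preserved under the algebra operations. Recall $\mathbf{M}^{r} = C_{\infty}[g_{1},\dots,g_{r-1},\Delta,h]$, and that by \eqref{Eq.Basic-coefficient-form-Eisenstein-series} each $g_{k}$ (hence $\Delta = g_{r}$) is, by an easy induction on $k$, a polynomial with $C_{\infty}$-coefficients in the special Eisenstein series $E_{q-1},\dots,E_{q^{k}-1}$. Thus $\mathbf{M}^{r} = C_{\infty}[E_{q-1},\dots,E_{q^{r}-1},h]$, and every $f\in\mathbf{M}^{r}$ is a finite $C_{\infty}$-linear combination of products of the forms $E_{q^{i}-1}$ ($1\le i\le r$) and $h$. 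The $t$-expansions of these generators are already known to converge uniformly on all of $\mathbf{F}$: for $h$ (and $\Delta$) this is Corollary \ref{Corollary.Uniform-convergence-of-infinite-product-P-on-F}, and for the $E_{q^{i}-1}$ it is the content of the subsection establishing \eqref{Eq.E-k-omega-is-E-k-omega-prime-minus-Goss-polynomials}, where the estimate $\lvert G_{q^{i}-1,\Lambda'}(t_{a}(\boldsymbol{\omega}))\rvert \le q^{-(d+k_{1})(q^{i}-1)}$ gives uniform convergence on $\mathbf{F}$ (not merely on each $\mathbf{F}_{\mathbf{k}}$, thanks to the linear interpolation of the relevant $q$-logarithms on simplices of $\mathcal{W}$).

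Next I would fix the correct notion of convergence. For a form with $t$-expansion $f = \sum_{n\ge 0} a_{n}(\boldsymbol{\omega}')t^{n}$, set $M_{n}(f) \defeq \sup_{\boldsymbol{\omega}\in\mathbf{F}} \lvert a_{n}(\boldsymbol{\omega}')\,t^{n}(\boldsymbol{\omega})\rvert$. Since the absolute value on $C_{\infty}$ is non-archimedean, the tail of the series is bounded by $\sup_{n>N}M_{n}(f)$, so uniform convergence of the $t$-expansion on $\mathbf{F}$ is \emph{equivalent} to $M_{n}(f)\to 0$ (in particular each $M_{n}(f)$ is finite for the generators by the cited results). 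Closure under $C_{\infty}$-scalars and finite sums is immediate from the ultrametric inequality and $\lvert t\rvert\le 1$ on $\mathbf{F}$. For a product, uniqueness of $t$-expansions yields the Cauchy formula $a_{n}(f_{1}f_{2}) = \sum_{i+j=n} a_{i}(f_{1})\,a_{j}(f_{2})$, whence, using the ultrametric inequality and that the supremum of a product is at most the product of the suprema,
\[
	M_{n}(f_{1}f_{2}) \le \max_{i+j=n} M_{i}(f_{1})\,M_{j}(f_{2}).
\]

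The crux — the one step that needs an argument rather than a citation — is that $\max_{i+j=n} M_{i}(f_{1})M_{j}(f_{2})\to 0$ whenever $M_{i}(f_{1})\to 0$ and $M_{j}(f_{2})\to 0$; this is the elementary fact that the convolution of two null sequences of non-negative reals is null. Both sequences are bounded by some $B$, and for $\varepsilon>0$ one picks $N$ with $M_{i}(f_{1}),M_{j}(f_{2})<\varepsilon/B$ for all indices $\ge N$; for $n\ge 2N$ every decomposition $i+j=n$ has $i\ge N$ or $j\ge N$, forcing $M_{i}(f_{1})M_{j}(f_{2})<\varepsilon$. Hence $M_{n}(f_{1}f_{2})\to 0$, so $f_{1}f_{2}$ has uniformly convergent $t$-expansion on $\mathbf{F}$; the same estimate shows that this expansion is the Cauchy product and that it converges to $f_{1}f_{2}$ on the common region of convergence, which contains $\mathbf{F}$. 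Iterating over the monomials that express an arbitrary $f\in\mathbf{M}^{r}$ in the generators $E_{q^{i}-1}$ and $h$ then finishes the proof. The main obstacle I anticipate is purely conceptual rather than computational: ensuring that the sup-norm $M_{n}$ is taken over \emph{all} of $\mathbf{F}$ (so that multiplicativity is not an artifact of restricting to a single stratum $\mathbf{F}_{\mathbf{k}}$), which is exactly why it matters that the generators were shown to converge uniformly on all of $\mathbf{F}$ and not just stratum-by-stratum.
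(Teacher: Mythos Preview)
Your proof is correct and follows essentially the same approach as the paper: reduce to the generators $E_{q^{i}-1}$ ($1\le i\le r$) and $h$, invoke the already-established uniform convergence of their $t$-expansions on $\mathbf{F}$, and conclude for arbitrary $f\in\mathbf{M}^{r}$ by closure under the algebra operations. The paper's own proof is a two-line sketch that leaves the closure under products implicit, whereas you spell it out carefully via the $M_{n}$-norms and the null-convolution argument; this added rigor is welcome but not a genuinely different route.
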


\begin{proof}
	The \(E_{q^{i}-1}\) with \(1 \leq i \leq r\) generate \(\mathbf{M}_{0}^{r}\), and \(\mathbf{M}^{r} = \mathbf{M}_{0}^{r}[h]\).	
\end{proof}

\begin{bibdiv}
	\begin{biblist}
		\bib{Anderson86}{article}{
			AUTHOR = {Anderson, Greg W.},
    		TITLE = {{$t$}-motives},
  			JOURNAL = {Duke Math. J.},
 			FJOURNAL = {Duke Mathematical Journal},
   			VOLUME = {53},
     		YEAR = {1986},
   			NUMBER = {2},
    		PAGES = {457--502},
     		ISSN = {0012-7094,1547-7398},
  			MRCLASS = {11F67 (11G05 11R58 14K05)},
 			MRNUMBER = {850546},
			MRREVIEWER = {David\ Goss},
      		DOI = {10.1215/S0012-7094-86-05328-7},
      		URL = {https://doi.org/10.1215/S0012-7094-86-05328-7},
		}	
		\bib{Basson14}{book}{
			AUTHOR={Basson, Dirk},
			TITLE={On the coefficients of Drinfeld modular forms of higher rank, Ph.D. thesis},
			PUBLISHER={Stellenbosch University},
			YEAR={2014},
			URL={https://scholar.sun.ac.za/handle/10019.1/86387},
		}
		\bib{Basson17}{article}{
		AUTHOR = {Basson, Dirk},
    		TITLE = {A product formula for the higher rank {D}rinfeld discriminant function},
  			JOURNAL = {J. Number Theory},
 			FJOURNAL = {Journal of Number Theory},
   			VOLUME = {178},
     		YEAR = {2017},
    		PAGES = {190--200},
     		ISSN = {0022-314X,1096-1658},
  			MRCLASS = {11F52},
 			MRNUMBER = {3646835},
			MRREVIEWER = {Ahmad\ El-Guindy},
      		DOI = {10.1016/j.jnt.2017.02.010},
      		URL = {https://doi.org/10.1016/j.jnt.2017.02.010},
		}	
		\bib{BassonBreuer17}{article}{
			AUTHOR = {Basson, Dirk}, 
			AUTHOR = {Breuer, Florian},
    		TITLE = {On certain {D}rinfeld modular forms of higher rank},
  			JOURNAL = {J. Th\'eor. Nombres Bordeaux},
 			FJOURNAL = {Journal de Th\'eorie des Nombres de Bordeaux},
   			VOLUME = {29},
     		YEAR = {2017},
   			NUMBER = {3},
    		PAGES = {827--843},
     		ISSN = {1246-7405,2118-8572},
  			MRCLASS = {11F52 (11G09)},
 			MRNUMBER = {3745251},
			MRREVIEWER = {Ahmad\ El-Guindy},
      		DOI = {10.5802/jtnb.1003},
      		URL = {https://doi.org/10.5802/jtnb.1003},
		}
		\bib{BassonBreuerPink24}{article}{
			AUTHOR = {Basson, Dirk},
			AUTHOR = {Breuer, Florian},
			AUTHOR = {Pink, Richard},
    		TITLE = {Drinfeld modular forms of arbitrary rank},
  			JOURNAL = {Mem. Amer. Math. Soc.},
 			FJOURNAL = {Memoirs of the American Mathematical Society},
   			VOLUME = {304},
     		YEAR = {2024},
   			NUMBER = {1531},
    		PAGES = {viii+79},
     		ISSN = {0065-9266,1947-6221},
     		ISBN = {978-1-4704-7223-8; 978-1-4704-8010-3},
  			MRCLASS = {11F52 (11G09)},
 			MRNUMBER = {4850411},
      		DOI = {10.1090/memo/1531},
      		URL = {https://doi.org/10.1090/memo/1531},
		}
		\bib{Dolgachev82}{incollection}{
			AUTHOR = {Dolgachev, Igor},
    		TITLE = {Weighted projective varieties},
			BOOKTITLE = {Group actions and vector fields ({V}ancouver, {B}.{C}., 1981)},
   			SERIES = {Lecture Notes in Math.},
   			VOLUME = {956},
    		PAGES = {34--71},
			PUBLISHER = {Springer, Berlin},
     		YEAR = {1982},
     		ISBN = {3-540-11946-9},
  			MRCLASS = {14L32 (14A05 14B05)},
 			MRNUMBER = {704986},
      		DOI = {10.1007/BFb0101508},
      		URL = {https://doi.org/10.1007/BFb0101508},
		}
		\bib{Gekeler80}{book}{
			AUTHOR = {Gekeler, Ernst-Ulrich},
    		TITLE = {Drinfeld-{M}oduln und modulare {F}ormen \"uber rationalen {F}unktionenk\"orpern},
   			SERIES = {Bonner Mathematische Schriften [Bonn Mathematical Publications]},
   			VOLUME = {119},
     		NOTE = {Dissertation, Rheinische Friedrich-Wilhelms-Universit\"at, Bonn, 1979},
			PUBLISHER = {Universit\"at Bonn, Mathematisches Institut, Bonn},
     		YEAR = {1980},
    		PAGES = {iv+142},
  			MRCLASS = {10D12},
 			MRNUMBER = {594434},
			MRREVIEWER = {Thomas\ Zink},
		}
		\bib{Gekeler85}{article}{
			AUTHOR = {Gekeler, Ernst-Ulrich},
    		TITLE = {A product expansion for the discriminant function of {D}rinfeld{} modules of rank two},
  			JOURNAL = {J. Number Theory},
 			FJOURNAL = {Journal of Number Theory},
   			VOLUME = {21},
     		YEAR = {1985},
   			NUMBER = {2},
    		PAGES = {135--140},
     		ISSN = {0022-314X,1096-1658},
  			MRCLASS = {11F99 (11R58)},
 			MRNUMBER = {808282},
			MRREVIEWER = {David\ Goss},
      		DOI = {10.1016/0022-314X(85)90046-0},
      		URL = {https://doi.org/10.1016/0022-314X(85)90046-0},
		}
		\bib{Gekeler86}{article}{
			AUTHOR = {Gekeler, Ernst-Ulrich},
    		TITLE = {\"Uber {D}rinfeldsche {M}odulkurven vom {H}ecke-{T}yp},
  			JOURNAL = {Compositio Math.},
 			FJOURNAL = {Compositio Mathematica},
   			VOLUME = {57},
     		YEAR = {1986},
   			NUMBER = {2},
    		PAGES = {219--236},
     		ISSN = {0010-437X,1570-5846},
  			MRCLASS = {11G20 (11G18 11R58 14G15)},
 			MRNUMBER = {827352},
			MRREVIEWER = {Bert\ van Geemen},
      		URL = {http://www.numdam.org/item?id=CM_1986__57_2_219_0},
		}
		\bib{Gekeler86-2}{book}{
			AUTHOR = {Gekeler, Ernst-Ulrich},
    		TITLE = {Drinfeld{} modular curves},
   			SERIES = {Lecture Notes in Mathematics},
   			VOLUME = {1231},
			PUBLISHER = {Springer-Verlag, Berlin},
     		YEAR = {1986},
    		PAGES = {xiv+107},
     		ISBN = {3-540-17201-7},
  			MRCLASS = {11R58 (11G18 14G20)},
 			MRNUMBER = {874338},
			MRREVIEWER = {David\ Goss},
      		DOI = {10.1007/BFb0072692},
      		URL = {https://doi.org/10.1007/BFb0072692},
		}
		\bib{Gekeler88}{article}{
			AUTHOR = {Gekeler, Ernst-Ulrich},
    		TITLE = {On the coefficients of {D}rinfeld{} modular forms},
  			JOURNAL = {Invent. Math.},
 			FJOURNAL = {Inventiones Mathematicae},
   			VOLUME = {93},
     		YEAR = {1988},
   			NUMBER = {3},
    		PAGES = {667--700},
     		ISSN = {0020-9910,1432-1297},
  			MRCLASS = {11F85 (11G20 11R58)},
 			MRNUMBER = {952287},
			MRREVIEWER = {David\ Goss},
      		DOI = {10.1007/BF01410204},
      		URL = {https://doi.org/10.1007/BF01410204},
		}
		\bib{Gekeler92}{article}{
			AUTHOR = {Gekeler, Ernst-Ulrich},
    		TITLE = {On the arithmetic of some division algebras},
  			JOURNAL = {Comment. Math. Helv.},
 			FJOURNAL = {Commentarii Mathematici Helvetici},
   			VOLUME = {67},
     		YEAR = {1992},
   			NUMBER = {2},
    		PAGES = {316--333},
     		ISSN = {0010-2571,1420-8946},
  			MRCLASS = {11G09 (11R58)},
 			MRNUMBER = {1161288},
			MRREVIEWER = {David\ Goss},
      		DOI = {10.1007/BF02566503},
      		URL = {https://doi.org/10.1007/BF02566503},
		}
		\bib{Gekeler99}{article}{
			AUTHOR = {Gekeler, Ernst-Ulrich},
    		TITLE = {Growth order and congruences of coefficients of the {D}rinfeld discriminant function},
  			JOURNAL = {J. Number Theory},
 			FJOURNAL = {Journal of Number Theory},
  		 	VOLUME = {77},
     		YEAR = {1999},
   			NUMBER = {2},
    		PAGES = {314--325},
     		ISSN = {0022-314X,1096-1658},
  			MRCLASS = {11F52 (11F33)},
 			MRNUMBER = {1702216},
			MRREVIEWER = {Yoshinori\ Hamahata},
      		DOI = {10.1006/jnth.1999.2387},
      		URL = {https://doi.org/10.1006/jnth.1999.2387},
		}
		\bib{Gekeler19}{article}{
			AUTHOR = {Gekeler, Ernst-Ulrich},
    		TITLE = {Towers of {${\rm GL}(r)$}-type of modular curves},
  			JOURNAL = {J. Reine Angew. Math.},
 			FJOURNAL = {Journal f\"ur die Reine und Angewandte Mathematik. [Crelle's Journal]},
  	 		VOLUME = {754},
     		YEAR = {2019},
    		PAGES = {87--141},
     		ISSN = {0075-4102,1435-5345},
  			MRCLASS = {11G09 (11G18 11R32 14G35)},
 			MRNUMBER = {4000571},
			MRREVIEWER = {David\ Tweedle},
      		DOI = {10.1515/crelle-2017-0012},
      		URL = {https://doi.org/10.1515/crelle-2017-0012},
		}
		\bib{Gekeler17}{article}{
			AUTHOR = {Gekeler, Ernst-Ulrich},
    		TITLE = {On {D}rinfeld modular forms of higher rank},
  			JOURNAL = {J. Th\'eor. Nombres Bordeaux},
 			FJOURNAL = {Journal de Th\'eorie des Nombres de Bordeaux},
   			VOLUME = {29},
     		YEAR = {2017},
   			NUMBER = {3},
    		PAGES = {875--902},
     		ISSN = {1246-7405,2118-8572},
  			MRCLASS = {11F52 (11G09 14G22)},
 			MRNUMBER = {3745253},
			MRREVIEWER = {SoYoung\ Choi},
      		DOI = {10.5802/jtnb.1005},
      		URL = {https://doi.org/10.5802/jtnb.1005},
		}
		\bib{Gekeler22}{article}{
			AUTHOR = {Gekeler, Ernst-Ulrich},
    		TITLE = {On {D}rinfeld modular forms of higher rank {II}},
  			JOURNAL = {J. Number Theory},
 			FJOURNAL = {Journal of Number Theory},
   			VOLUME = {232},
     		YEAR = {2022},
    		PAGES = {4--32},
     		ISSN = {0022-314X,1096-1658},
  			MRCLASS = {11F52 (11G09)},
 			MRNUMBER = {4343822},
			MRREVIEWER = {Ahmad\ El-Guindy},
      		DOI = {10.1016/j.jnt.2018.11.011},
      		URL = {https://doi.org/10.1016/j.jnt.2018.11.011},
		}
		\bib{Gekeler22-2}{article}{
			AUTHOR = {Gekeler, Ernst-Ulrich},
    		TITLE = {On {D}rinfeld modular forms of higher rank {IV}: {M}odular forms with level},
  			JOURNAL = {J. Number Theory},
 			FJOURNAL = {Journal of Number Theory},
   			VOLUME = {232},
     		YEAR = {2022},
    		PAGES = {33--74},
     		ISSN = {0022-314X,1096-1658},
  			MRCLASS = {11F52 (11G09 14G35)},
 			MRNUMBER = {4343823},
			MRREVIEWER = {Maria\ Valentino},
      		DOI = {10.1016/j.jnt.2019.04.019},
      		URL = {https://doi.org/10.1016/j.jnt.2019.04.019},
		}
		\bib{Gekeler25}{article}{
			AUTHOR = {Gekeler, Ernst-Ulrich},
    		TITLE = {On {D}rinfeld modular forms of higher rank {VII}: {E}xpansions at the boundary},
  			JOURNAL = {J. Number Theory},
 			FJOURNAL = {Journal of Number Theory},
   			VOLUME = {269},
     		YEAR = {2025},
    		PAGES = {260--340},
     		ISSN = {0022-314X,1096-1658},
  			MRCLASS = {11F52 (11G16 11G18 14D22 14G22)},
 			MRNUMBER = {4833117},
      		DOI = {10.1016/j.jnt.2024.09.015},
      		URL = {https://doi.org/10.1016/j.jnt.2024.09.015},
		}
		\bib{GekelerReversat96}{article}{
			AUTHOR = {Gekeler, Ernst-Ulrich},
			AUTHOR = {Reversat, Marc},
    		TITLE = {Jacobians of {D}rinfeld modular curves},
  			JOURNAL = {J. Reine Angew. Math.},
 			FJOURNAL = {Journal f\"ur die Reine und Angewandte Mathematik. [Crelle's Journal]},
   			VOLUME = {476},
     		YEAR = {1996},
    		PAGES = {27--93},
     		ISSN = {0075-4102,1435-5345},
  			MRCLASS = {11G09 (11R39)},
 			MRNUMBER = {1401696},
			MRREVIEWER = {David\ Goss},
      		DOI = {10.1515/crll.1996.476.27},
      		URL = {https://doi.org/10.1515/crll.1996.476.27},
		}
		\bib{Goss80}{article}{
			AUTHOR = {Goss, David},
    		TITLE = {{$\pi $}-adic {E}isenstein series for function fields},
  			JOURNAL = {Compositio Math.},
 			FJOURNAL = {Compositio Mathematica},
   			VOLUME = {41},
     		YEAR = {1980},
   			NUMBER = {1},
    		PAGES = {3--38},
     		ISSN = {0010-437X,1570-5846},
  			MRCLASS = {10D45 (10D99)},
 			MRNUMBER = {578049},
			MRREVIEWER = {K.\ Shiratani},
      		URL = {http://www.numdam.org/item?id=CM_1980__41_1_3_0},
		}
		\bib{Goss80-2}{article}{
			AUTHOR = {Goss, David},
    		TITLE = {Modular forms for {${\bf F}\sb{r}[T]$}},
  			JOURNAL = {J. Reine Angew. Math.},
 			FJOURNAL = {Journal f\"ur die Reine und Angewandte Mathematik. [Crelle's Journal]},
   			VOLUME = {317},
     		YEAR = {1980},
    		PAGES = {16--39},
     		ISSN = {0075-4102,1435-5345},
  			MRCLASS = {10D30 (14K15)},
 			MRNUMBER = {581335},
			MRREVIEWER = {Daniel\ Barsky},
      		DOI = {10.1515/crll.1980.317.16},
      		URL = {https://doi.org/10.1515/crll.1980.317.16},
		}
		\bib{LopezBartolome10}{article}{
			AUTHOR = {L\'opez, Bartolom\'e},
    		TITLE = {A non-standard {F}ourier expansion for the {D}rinfeld discriminant function},
  			JOURNAL = {Arch. Math. (Basel)},
 			FJOURNAL = {Archiv der Mathematik},
  	 		VOLUME = {95},
     		YEAR = {2010},
   			NUMBER = {2},
    		PAGES = {143--150},
     		ISSN = {0003-889X,1420-8938},
  			MRCLASS = {11F52 (11G09)},
 			MRNUMBER = {2674250},
			MRREVIEWER = {SoYoung\ Choi},
      		DOI = {10.1007/s00013-010-0148-7},
      		URL = {https://doi.org/10.1007/s00013-010-0148-7},
		}
		\bib{Petrov13}{article}{
			AUTHOR = {Petrov, Aleksandar},
    		TITLE = {{$A$}-expansions of {D}rinfeld modular forms},
  			JOURNAL = {J. Number Theory},
 			FJOURNAL = {Journal of Number Theory},
   			VOLUME = {133},
     		YEAR = {2013},
   			NUMBER = {7},
    		PAGES = {2247--2266},
     		ISSN = {0022-314X,1096-1658},
  			MRCLASS = {11F52},
 			MRNUMBER = {3035961},
			MRREVIEWER = {SoYoung\ Choi},
      		DOI = {10.1016/j.jnt.2012.12.012},
      		URL = {https://doi.org/10.1016/j.jnt.2012.12.012},
		}
		\bib{Shimura71}{book}{
			AUTHOR = {Shimura, Goro},
    		TITLE = {Introduction to the arithmetic theory of automorphic functions},
   			SERIES = {Kan\^o{} Memorial Lectures},
   			VOLUME = {No. 1},
     		NOTE = {Publications of the Mathematical Society of Japan, No. 11},
			PUBLISHER = {Iwanami Shoten Publishers, Tokyo; Princeton University Press, Princeton, NJ},
     		YEAR = {1971},
    		PAGES = {xiv+267},
  			MRCLASS = {10D10 (12-02 14K22)},
 			MRNUMBER = {314766},
			MRREVIEWER = {A.\ N.\ Andrianov},
		}
	\end{biblist}
\end{bibdiv}

\end{document}